\definecolor{dmagenta}{rgb}{.4,.1,.5}
\definecolor{dblue}{rgb}{.0,.0,.5}
\definecolor{mblue}{rgb}{.0,.0,.7}
\definecolor{ddblue}{rgb}{.0,.0,.4}
\definecolor{dred}{rgb}{.7,.0,.0}
\definecolor{dgreen}{rgb}{.0,.5,.0}
\definecolor{Eeom}{rgb}{.0,.0,.5}
\newtheorem{lemma}{Lemma}[section]
\newtheorem{theorem}{Theorem}[section]
\newtheorem{corollary}{Corollary}[section]
\theoremstyle{definition}
\newtheorem{definition}{Definition}[section]
\theoremstyle{remark}
\newtheorem{example}{Example}[section]
\newtheorem{remark}{Remark}[section]
\numberwithin{equation}{section}
\crefname{section}{Section}{Sections}
\crefname{subsection}{Subsection}{Subsections}
\crefname{condition}{Condition}{Conditions}
\crefname{hypothesis}{Hypothesis}{Conditions}
\crefname{assumption}{Assumption}{Assumptions}
\crefname{lemma}{Lemma}{Lemmas}
\crefname{claim}{Claim}{Claims}
\Crefname{figure}{Figure}{Figures}
\newcommand{\df}{\coloneqq}
\DeclareMathOperator{\Exp}{\mathbb{E}} 
\DeclareMathOperator{\Prob}{\mathbb{P}} 
\newcommand{\D}{\mathrm{d}} 
\newcommand{\E}{\mathrm{e}} 
\newcommand{\RR}{\mathbb{R}} 
\newcommand{\Rd}{{\mathbb{R}^d}} 
\newcommand{\NN}{\mathbb{N}} 
\newcommand{\Ind}{\mathds{1}} 
\newcommand{\Act}{\mathbb{U}} 
\newcommand{\Sob}{\mathscr{W}} 
\newcommand{\Sobl}{\mathscr{W}_{\mathrm{loc}}} 
\newcommand{\Cc}{C} 
\newcommand{\Lp}{L} 
\newcommand{\Lpl}{L_{\mathrm{loc}}} 
\newcommand{\lamstr}{\lambda^{\mspace{-1mu}*}} 
\newcommand{\transp}{^{\mathsf{T}}} 
\newcommand{\sorder}{{\mathfrak{o}}} 
\newcommand{\Lg}{\mathscr{L}} 
\newcommand{\uuptau}{\Breve{\uptau}}
\newcommand{\grad}{\nabla}
\newcommand{\cB}{\mathcal{B}} 
\newcommand{\sB}{\mathscr{B}} 
\newcommand{\sC}{\mathscr{C}} 
\newcommand{\cC}{\mathcal{C}} 
\newcommand{\cD}{\mathcal{D}} 
\newcommand{\sI}{\mathscr{I}}
\newcommand{\sK}{\mathscr{K}} 
\newcommand{\Lyap}{\mathcal{V}} 
\newcommand{\abs}[1]{\lvert#1\rvert}
\newcommand{\norm}[1]{\lVert#1\rVert}
\newcommand{\babs}[1]{\bigl\lvert#1\bigr\rvert}
\DeclareMathOperator{\trace}{trace}
\begin{document}

\title[Liouville properties of eigenfunctions of elliptic operators]%
{Certain Liouville properties of eigenfunctions\\ of elliptic operators}

\author[Ari Arapostathis]{Ari Arapostathis}
\address{Department of ECE,
The University of Texas at Austin, 2501 Speedway, EER~7.824,
Austin, TX~~78712, USA}
\email{ari@ece.utexas.edu}
\author[Anup Biswas]{Anup Biswas}
\address{Department of Mathematics,
Indian Institute of Science Education and Research,
Dr. Homi Bhabha Road, Pune 411008, India}
\email{anup@iiserpune.ac.in}
\author[Debdip Ganguly]{Debdip Ganguly}
\address{Department of Mathematics,
Indian Institute of Science Education and Research,
Dr. Homi Bhabha Road, Pashan, Pune 411008, India}
\email{debdip@iiserpune.ac.in}
 
\begin{abstract}
We present certain Liouville properties of eigenfunctions of
second-order elliptic operators with real coefficients,
via an approach that is based on stochastic representations of positive solutions,
and criticality theory of second-order elliptic operators.
These extend results of
Y.~Pinchover to the case of \emph{nonsymmetric} operators of
Schr\"odinger type. In particular, we provide an answer to an
open problem posed by Pinchover in
[\textit{Comm.\ Math.\ Phys.\/} \textbf{272} (2007), no.~1, 75--84, Problem~5].
In addition, we prove a lower bound on the decay of positive supersolutions of general
second-order elliptic operators in any dimension, and discuss
its implications to the Landis conjecture.
\end{abstract}

\keywords{Principal eigenvalue, Landis' conjecture,
decay of eigenfunctions, Liouville property}

\subjclass[2000]{Primary 35J15; Secondary 35A02, 35B40, 35B60}


\maketitle

\tableofcontents

\section{Introduction}

The main objective of this paper is to establish Liouville properties
of eigenfunctions of second-order elliptic operators.
These type of results came to prominence after the paper of 
Pinchover \cite{Pinchover-07} where he proved a very interesting property which can be
stated as follows.
Let $\cD$ be a domain in $\Rd$ and let $P_i=-\mbox{div}(Au)- V_i u$, $i=1,2$,
be two nonnegative
Schr\"{o}dinger operators, with $V_i\in \Lpl^{p}(\cD)$ for $p>\nicefrac{d}{2}$,
and $A$ locally non-degenerate in $\cD$.
Suppose that $P_1$ is critical in $\cD$ with ground state $\Psi^*_1$,
that the generalized principal eigenvalue of $P_2$ is nonnegative,
and that there exists a subsolution $\Psi$, with $\Psi^+\ne 0$,
to $P_2u=0$ in $\cD$ satisfying $\Psi^+\le C\Psi^*_1$ for some constant $C$.
Then $P_2$ is also critical with ground state $\Psi$.
In particular, the principal eigenvalue of $P_2$ equals $0$, and $\Psi>0$.
In the same paper, Pinchover proposed two problems on the generalization of 
this result for (a) general non-symmetric second
order elliptic operators and (b) quasilinear operators of $p$-Laplacian type.
Later, a similar result for $p$-Laplacian operators was proved by Pinchover, Tertikas
and Tintarev in \cite{Pinchover-08}.
However, the problem concerning general second-order elliptic operators
remains open so far.
The main goal of this paper is to address this problem for a large class of
second-order elliptic operators.

Pinchover's approach was variational.
He first established the existence of a \emph{null sequence} for the quadratic
form associated with $P_i$, and then using \emph{criticality
theory}, together with a bound on the positive part
of the subsolution, he obtained the  above mentioned Liouville-type result.
Unfortunately, for general (nonsymmetric) operators the existence of such a null sequence
is not possible, despite the fact that criticality theory is well developed for
general operators. Moreover,
in \cite[Remark~4.1]{Pinchover-07} Pinchover discussed the difficulty in
obtaining the above Liouville-type results for 
general (nonsymmetric) second-order elliptic operators.
In this paper we show that the above Liouville-type result holds for a
fairly general class of second-order elliptic operators
and potentials. 
Our approach differs significantly from variational arguments, and relies 
on stochastic representations of positive solutions studied in \cite{ABS},
and criticality theory of second-order elliptic operators. 
This allows us to bypass the use of a null sequence.

For $\cD=\Rd$, it is known that the criticality of the operator is equivalent
to the recurrence of the twisted process \cites{ABS, Pinsky}.
An interesting observation in this paper
is that criticality is also equivalent to the \emph{strict right monotonicity} of the
(generalized) principal eigenvalue. Let $\Lg$ be a second-order elliptic 
operator and $\lamstr(V)$ denote the principal eigenvalue 
of the operator $\Lg + V$, with potential $V$. We say
that $\lamstr(V)$ is \emph{strictly right monotone at} $V$, or
\emph{strictly monotone at $V$ on the right}, if
 $\lamstr(V)<\lamstr(V+h)$ for any non-zero,
nonnegative continuous function $h$ that vanishes at infinity. 
This equivalence is established in \cref{T2.1}.
We also show that given two potential functions
$V_i\in \Lpl^\infty(\Rd)$, $i=1,2$, if $V_1-V_2$ has a fixed sign outside some
compact subset of $\Rd$, then a result analogous to the one described
in the preceding paragraph
holds (see \cref{T2.2,T2.3}).
In particular, if $P_1=\Lg_1 + V_1$ is a small perturbation of $P_2=\Lg_2+V_2$,
then, under suitable assumptions, the criticality of $P_1$ implies that of $P_2$.
To further strengthen these results, we study the \emph{strict monotonicity} of the
principal eigenvalue, by which we mean that
the principal eigenvalue is strictly left and right monotone at $V$
(i.e., $\lamstr(V-h)<\lamstr(V)<\lamstr(V+h)$).
It is shown in \cref{T2.4} that if the
principal eigenvalue corresponding to $P_1$ is strictly monotone,
$\Lg_1=\Lg_2$ outside a compact subset of $\Rd$, and $V_1-V_2$ vanishes at infinity,
then under analogous hypotheses,
the principal eigenvalue of $P_2$ is also strictly monotone.
These results can be further improved to $V_i\in\Lpl^\infty(\Rd)$,
provided we impose a
`stability' assumption on $\Lg$.
See \cref{T3.1} for more details. 

The second part of this paper deals with the lower bound on the decay
of positive supersolutions of general
second-order elliptic operators in any dimension.
The results obtained here extend those of Agmon \cite{Agmon-85},
Carmona \cite{Carmona-78}, Carmona and Simon \cite{Carmona-Simon}.
Our proof is based on the stochastic representation of 
positive solutions (see \cref{T4.1}). 
As a consequence, we prove the Landis conjecture
for a large class of potentials.
Landis' conjecture \cite{Kondratev-Landis} can be loosely stated as follows:
for a bounded potential $V$, if a
solution $u$ of $\Delta u + Vu=0$ satisfies the estimate
$\abs{u(x)}\le C\exp(-c\abs{x}^{1+})$, for some positive
constants $C$ and $c$, then $u$ is identically $0$.
For a precise statement, we refer the reader to \cref{Sec-4}.
This conjecture is open when $u$ and $V$ are real valued, while a counterexample
was constructed by Meshkov \cite{Meshkov} for complex-valued $u$ and $V$.
This conjecture was revisited recently in
\cites{Davey-Kenig-Wang,Kenig-Silvestre-Wang}, for dimension $2$ and for $V\le 0$.
Note that this conjecture trivially holds for $V\le 0$ due to
the strong maximum principle.
The main contribution in \cites{Davey-Kenig-Wang,Kenig-Silvestre-Wang} is the lower
bound on the decay rate of solutions that may not vanish at infinity.
In this direction, Kenig has conjectured in \cite[Question~1]{Kenig-05} a
lower bound on the decay of the eigenfunctions of the Schr\"{o}dinger's equation.
In \cref{T4.3} we validate the Landis conjecture for a large class
of potentials and in any dimension $d\ge 2$.
This class of potentials includes compactly supported functions.
We also wish to bring to the attention of the reader a recent study
of Liouville properties for nonlinear operators \cite{Bardi-16}.

The paper is organized as follows.
In \cref{Sec-2} we briefly review some basic results
from the criticality theory of second-order elliptic operators
and state our main results.
\cref{Sec-3} is devoted to the proofs of \cref{T2.1,,T2.2,T2.3,T2.4}.
In \cref{Sec-4} we establish a lower bound on the decay of positive
supersolutions (\cref{T4.1}), and discuss its implication
to Landis conjecture. 

\subsection*{Notation}
The open ball of radius $r$ around a point $x\in\Rd$ is denoted
by $B_r(x)$, and $B_r$ stands for $B_r(0)$.
By $\cC_0(\Rd)$ ($\cB_0(\Rd)$) we denote the collection of all
real valued continuous (Borel measurable) functions on
$\Rd$ that vanish at infinity.
By $\norm{\cdot}_\infty$ we denote the $L^\infty$ norm.
Also $\kappa_1, \kappa_2, \dotsc$ are used as generic constants whose values
might vary from place to place.

\section{Preliminaries and main results}\label{Sec-2}
In this section we introduce our assumptions and state our main results.
The conditions (A1)--(A3) on the coefficients of the operator that follow
are used in most of the results of the paper, so we assume that
they are in effect throughout unless otherwise mentioned.
A notable exception to this is \cref{T2.2}, where only (A3) is assumed.
\begin{enumerate}
\item[(A1)]\label{A1}
\emph{Local Lipschitz continuity:\/}
The function
$a=\bigl[a^{ij}\bigr]\,\colon\,\RR^{d}\to\mathcal{S}^{d\times d}_+$, 
where $\mathcal{S}^{d\times d}_+$ denotes the set of real, 
symmetric positive definite matrices,
is locally Lipschitz in $x$ with a Lipschitz constant $C_{R}>0$
depending on $R>0$.
In other words, we have
\begin{equation*}
\norm{a(x) - a(y)}
\,\le\,C_{R}\,\abs{x-y}\qquad\forall\,x,y\in B_R\,,
\end{equation*}
where $\norm{a}^{2}\df\mathrm{trace}\left(aa\transp\right)$.
The drift function $b\,\colon\Rd\to\RR$ is a locally bounded Borel measurable function.

\item[(A2)]
\emph{Affine growth condition:\/}
$b$ and $a$ satisfy a global growth condition of the form
\begin{equation*}
\langle b(x),x\rangle^+ + \norm{a(x)}\,\le\,C_0
\bigl(1 + \abs{x}^{2}\bigr) \qquad \forall\, x\in\RR^{d},
\end{equation*}
for some constant $C_0>0$.

\item[(A3)]
\emph{Nondegeneracy:\/}
For each $R>0$, it holds that
\begin{equation*}
\sum_{i,j=1}^{d} a^{ij}(x)\xi_{i}\xi_{j}
\,\ge\,C^{-1}_{R} \abs{\xi}^{2} \qquad\forall\, x\in B_{R}\,,
\end{equation*}
and for all $\xi=(\xi_{1},\dotsc,\xi_{d})\transp\in\RR^{d}$.

\end{enumerate}

We define $\upsigma(x)= \sqrt{2} a^{\nicefrac{1}{2}}(x)$.
Then under (A1) and (A3),
$\upsigma$ is also locally Lipschitz and has at most linear growth.
We say that $a$ is uniformly
elliptic if (A3) holds for a positive $C=C_R$ which is independent of $R$.

Consider the It\^{o} stochastic differential equation (SDE) given by
\begin{equation}\label{E2.1}
\D{X_s} \,=\, b(X_s)\, \D{s} + \upsigma(X_s)\, \D{W_s}\,,
\end{equation}
where $W$ is a standard $d$-dimensional Wiener process
defined on some complete,
filtered probability space $(\Omega, \mathfrak{F}, \{\mathfrak{F}_t\}, \Prob)$.
By a strong solution of \cref{E2.1} we mean an $\mathfrak{F}_t$--adapted
process $X_t$ which satisfies
\begin{equation*}
X_t \,=\, X_0 + \int_0^t b(X_s)\, \D{s} + \int_0^t \upsigma(X_s)\, \D{W_s}\,,
\quad t\ge 0, \quad \text{a.s.}\,,
\end{equation*}
where third term on the right hand side is an It\^{o} stochastic integral.
It is well known that given a complete, filtered probability space
$(\Omega, \mathfrak{F}, \{\mathfrak{F}_t\}, \Prob)$
with a Wiener process $W$, 
there exists a unique strong solution of \cref{E2.1}
\cite[Theorem~2.8]{Gyongy-96}.
The process $X$ is also strong Markov,
and we denote its transition kernel by $P^{t}(x,\cdot\,)$.
It also follows from the work in \cite{Bogachev-01}
that the transition probabilities of $X$
have densities which are locally H\"older continuous.
The \emph{extended generator} $\Lg$ is given by
\begin{equation}\label{E-Lg}
\Lg f(x) \,=\, a^{ij}(x)\,\partial_{ij} f(x)
+ b^{i}(x)\, \partial_{i} f(x)\,,
\end{equation}
for $f\in\Cc^{2}(\RR^{d})$.
The operator $\Lg$
is the generator of a strongly-continuous
semigroup on $\Cc_{b}(\RR^{d})$, which is strong Feller.
We let $\Prob_{x}$ denote the probability measure, and
$\Exp_{x}$ the expectation operator on the canonical space of the
process conditioned on $X_0=x$.

The closure, boundary, and the complement
of a set $A\subset\Rd$ are denoted
by $\Bar{A}$, $\partial{A}$, and $A^{c}$, respectively.
We write $A\Subset B$ to indicate that $\Bar{A}\subset B$.
By $\uptau(D)$
we denote the first exit time of the process $X$ from a domain
$D\subset\Rd$, i.e.,
\begin{equation*}
\uptau(D)\,\df\,\inf\; \{t\, \colon X_{t}\notin D\}\,.
\end{equation*}
The process $X$ is said to be \emph{recurrent} if for any
bounded domain $D$ we have $\Prob_x(\uptau(D^c) < \infty) = 1$
for all $x\in \Bar{D}^c$. Otherwise the process is called \emph{transient}.
A recurrent process is said to be \emph{positive recurrent} if
$\Exp_x[\uptau(D^c)] < \infty$ for all $x\in \Bar{D}^c$.
It is known that for a non-degenerate diffusion the property of recurrence
(or positive recurrence) is independent of domain $D$ and $x$, i.e., if it holds
for some domain $D$ and some $x\in \Bar{D}^c$, then it also holds for every
bounded domain $D$,
and all $x\in \Bar{D}^c$ \cite[Theorem~2.6.12 and Theorem~2.6.10]{book}.
By $\uuptau_r$ ($\uptau_r$) we denote the first hitting (exit) time of the
ball $B_r$ of radius $r$ around
$0$, i.e., $\uuptau_r=\uptau(B_r^c)$ and $\uptau_r=\uptau(B_r)$.

In order to state the results in this paper, we review some basic definitions from 
criticality theory which have been introduced by various authors 
\cites{Agmon-83, Barry-81, Murata-86, MMU},
and have been further developed by Y.~Pinchover
(see \cites{Pinchover-88, Pinchover-89, Pinchover-90} and references therein). 
The reader should keep in mind that although the convention in criticality
theory is to consider the eigenvalues of the operator $-\Lg$,
we find it more convenient to work with the eigenvalues of $\Lg$.

\begin{definition}
Throughout the paper, $\cD \subset \Rd$ denotes a domain,
and $\mathfrak{D}\df\{ \cD_j \}_{j = 1}^{\infty}$ a sequence of
bounded subdomains with smooth boundaries, such that
$\Bar \cD_j \subset \cD_{j +1}$, and $\cD = \cup_{j = 1}^{\infty} \cD_j$.
We denote the cone of all positive solutions of the
equation $\Lg u = 0$ in $\cD$ by $\sC_{\Lg}(\cD)$.
We always assume that solutions
$u$ are in $\Sobl^{2, d}(\cD)$, i.e.,
$u$ is a strong solution, so that $\Lg u$ is defined pointwise almost everywhere.

Given a \emph{potential} $V\in L^\infty_{\mathrm{loc}}(\cD)$,
we introduce the operator
\begin{equation*}
\Lg_{V} \,\df\, \Lg + V\,.
\end{equation*}
We say that $-\Lg_V$ is \emph{nonnegative in} $\cD$
(and denote it by $-\Lg_V\ge 0$ in $\cD$),
if $\sC_{\Lg_V}(\cD)\ne \varnothing$.
The generalized principal eigenvalue of the operator $\Lg_{V}$ is defined by
\begin{equation*}
\lamstr(\Lg, V) \,\df\, \inf\; \bigl\{ \lambda \in \mathbb{R}\,\colon
\sC_{\Lg_{V} - \lambda}(\cD) \ne\varnothing \bigr\}\,.
\end{equation*}
Note that $-\Lg_{V}$ is nonnegative in $\cD$ if and only if
$ \lamstr(\Lg,V)\le 0$. 
\end{definition}

It is clear that $-\Lg$ is always nonnegative, since
$\bm1\in\sC_{\Lg}(\cD)$,
where $\bm1$ is the constant function on $\cD$ having value $1$
at every $x\in \cD$.
In the sequel we shall use the notation $\lamstr(V)$ instead of $\lamstr(\Lg, V)$,
whenever this is not ambiguous.
In most of the paper we deal with the case $\cD=\Rd$.
An exception to this is \cref{T2.2},
where we address the question of Pinchover \cite[Problem~5]{Pinchover-07}
for general domains $\cD$.

Let us now recall the definitions of critical and subcritical operators
and the ground state. 

\begin{definition}[Minimal growth at infinity]\label{MGI}
A positive function $u\in \Sobl^{2, d}(\cD)$ satisfying 
\begin{equation*}
\Lg_{V} u \,=\, 0 \quad \text{a.e. in}\ \cD\,,
\end{equation*}
is said to be a solution of minimal growth at infinity,
if for any compact $K\subset \cD$ and any positive function
$v\in \Sobl^{2, d}(\cD\setminus K)$ which 
satisfies $\Lg_{V} v\le 0$ a.e. in $\cD\setminus K$,
there exist $\cD_i\in\mathfrak{D}$, with $K\subset\cD_i$, and a constant $\kappa>0$
such that $\kappa u\le v$ in $\Bar\cD_i^c\cap \cD$.
A positive solution $u \in \sC_{\Lg_{V}}(\cD)$
which has minimal growth at infinity in
$\cD$ is called the (\emph{Agmon}) \emph{ground state} of $\Lg_{V}$ in $\cD$.
\end{definition}

\begin{remark}
\Cref{MGI} is equivalent to what
is generally used in criticality theory.
In criticality theory for an operator $P$, a function $u \in \sC_{P}(\cD)$ is
said to have a minimal growth at infinity in $\cD$, if for
any $K\Subset \cD$, with a smooth boundary,
and any positive supersolution 
$v\in\Sobl^{2, d}(\cD\setminus K)$ of $Pv=0$ in 
$\cD \setminus K$ such that 
$v\in \cC((\cD \setminus K)\cup \partial K)$, and
$u \le v$ on $\partial K$, it holds that $u \le v$ in $\cD \setminus K$.

It is easy to see that this definition implies minimal growth at infinity
according to
\cref{MGI} for $P=\Lg_{V}$.
To see the converse direction, define
\begin{equation*}
\kappa_0 \,=\, \inf_{\cD\cap K^c}\;\frac{v}{u}\,\,.
\end{equation*}
Since $u\le v$ on $\partial K$, we must have $\kappa_0\le 1$ by continuity.
We claim that $\kappa_0=1$.
Arguing by contradiction, suppose that $\kappa_0<1$.
Then $v-\kappa_0 u$ must be positive on $\cD\setminus K$ by the strong
maximum principle.
Since $P(v-\kappa_0 u)\le 0$, then
by \cref{MGI} there exist $\kappa\in(0, 1- \kappa_0)$
and $\cD_i\in\mathfrak{D}$,
such that $\kappa u\le v-\kappa_0 u$ in $\Bar\cD_i^c\cap \cD$.
Without loss of generality, suppose that $\cD_i\supset K$.
Applying the strong maximum principle in $\cD_i\cap K^c$ to 
\begin{equation*}
\Lg \Phi - V^-\Phi\,\le\, 0\,, \quad \Phi \,=\, v-(\kappa_0+\kappa) u\,,
\end{equation*}
we have $\kappa u\le v-\kappa_0 u$ in $\Bar\cD_i\cap K^c$,
and therefore $(\kappa_0+\kappa) u\le v$ in $\cD\cap K^c$.
But this contradicts the
definition of $\kappa_0$. Hence $\kappa_0=1$.
\end{remark}

\begin{definition}
The operator $\Lg_{V}$ is said to be \emph{critical} in $\cD$, if
$\Lg_{V}$ admits a ground state in $\cD$. 
The operator $\Lg_{V}$ is called \emph{subcritical} in $\cD$, if 
$-\Lg_{V}\ge 0$ in $\cD$, but $\Lg_{V}$ does not admit
a ground state solution.
\end{definition}

\begin{example}
Let $\Lg = \Delta$ in $\mathbb{R}^d$, $d \ge 1$.
It is well known that $\lambda^{*}(\Lg,0) = 0$.
Moreover, $\Lg$ is critical if and only if $d \le 2$.
\end{example}

\begin{example}
Let $\cD = \mathbb{R}^d \setminus \{ 0 \}$, $d \ge 3$, and consider the Hardy operator 
\begin{equation*}
\Lg_{V} \df \Delta + \frac{(d-2)^2}{4} \frac{1}{|x|^2}\,.
\end{equation*}
Then it is well known that $\Lg_V$ is critical,
and the corresponding ground state is $|x|^{\frac{2-d}{2}}$.
\end{example}

\begin{remark}
For $\cD=\Rd$ one can also define the (generalized) principal eigenvalue in the sense of
Berestycki and Rossi \cite{Berestycki-15} (see also \cite{NP-92}) by
\begin{equation*}
\Hat\lambda^*(\Lg, V)\,\df\, \inf\;\bigl\{\lambda\in\RR\,\colon\, \exists\,
\varphi\in\Sobl^{2, d}(\Rd),~ \varphi>0,~ \Lg_{V} \varphi - \lambda \varphi\le 0
\text{\ a.e. in\ }\Rd\bigr\}\,\,.
\end{equation*}
For $V\in\Lpl^\infty(\Rd)$, it is known from \cite[Theorem~1.4]{Berestycki-15}
that there exists a (generalized) positive eigenfunction corresponding to
$\Hat\lambda^*(\Lg, V)$, whenever this is finite.
Thus $\Hat\lambda^*(\Lg, V)=\lambda^*(\Lg, V)$.
\end{remark}

\begin{remark}
Let $P=\Lg_{V}$ in $\cD$.
It is well known that the operator $P$ is critical in $\cD$, if and only if
the equation $P u = 0$ in $\cD$ has a unique (up to a multiplicative constant)
positive supersolution (see \cites{Pinchover-88, Pinchover-89}).
In particular, $P$ is critical in $\cD$ if and only if $P$ does not admit a
positive Green's function in $\cD$.
However, there exists a sign-changing Green's function for a $P$
which is critical in $\cD$
(see \cite{Gan-Pinch-16}).
In addition, in the critical case, we have $\dim \sC_{P}(\cD) = 1$,
and the unique positive solution (up to a multiplicative positive constant)
is a ground state of $P$ in $\cD$. 

On the other hand, $P$ is subcritical in $\cD$ if and only if $P$ admits a unique
positive minimal Green's function $G_{P}^{\cD}(x,y)$ in $\cD$.
Moreover, for any fixed $y\in \cD$, the function $G_{P}^{\cD}(\cdot,y)$ is a
positive solution of minimal growth in a neighborhood of infinity in
$\cD$, i.e., in $\cD\setminus K$ for some compact set $K$
(see \cite{BMY}). 
\end{remark}


For an eigenpair $(\Psi, \lambda)$ of $\Lg_{V}$ in $\Rd$, i.e.,
a solution of
\begin{equation*}
\Lg_{V} \Psi \,=\, \lambda \Psi, \quad \Psi>0\quad \text{in\ } \Rd\,,
\end{equation*}
the \emph{twisted process corresponding to $(\Psi, \lambda)$} is defined by the SDE
\begin{equation}\label{twisted}
\D{Y}_s \,=\, b(Y_s)\,\D{s} + 2a(Y_s)\grad\psi(Y_s)\, \D{s} + \upsigma(Y_s)\,\D{W_s}\,,
\end{equation}
with $\psi=\log\Psi$.
The process $Y$ also goes by the name of
Doob's h-transformation in the literature.
Since $\psi\in \Sobl^{2, p}(\Rd)$, $p > d$,
it follows that $\psi$ is locally bounded (in fact, it is locally H\"{o}lder continuous), 
and therefore \cref{twisted} has a unique strong solution up to its explosion time.
In what follows, we use the notation $(\Psi^*, \lamstr (V))$ to denote a principal
eigenpair.

Let us introduce one more definition which is related to the criticality
of an operator. By $\cC^+_0(\Rd)$ we denote
the collection of all nonnegative, non-zero, real valued continuous functions on
$\Rd$ that vanish at infinity.
We fix $\Lg$, and dropping the dependence on $\Lg$ in the notation,
as mentioned earlier,
we let $\lamstr(V)$ denote the principal eigenvalue of $\Lg+V$.

\begin{definition}
$\lamstr(V)$ is said to be \emph{strictly monotone at $V$} if for all
$h\in \cC^+_0(\Rd)$ we have $\lamstr(V-h)<\lamstr(V)<\lamstr(V+h)$.
Also, $\lamstr(V)$ is said to be \emph{strictly monotone at $V$ on the right}
if for all $h\in \cC^+_0(\Rd)$ we have
$\lamstr(V)<\lamstr(V+h)$.
\end{definition}

It is known from \cite[Theorem~2.2]{ABS} that if $\lamstr(V-h)<\lamstr(V)$
for some $h\in\cC^+_0(\Rd)$, then $\lamstr(V-h)<\lamstr(V)<\lamstr(V+h)$
for all $h\in\cC^+_0(\Rd)$.
This assertion also follows using the fact that
$V\mapsto \lamstr(V)$ is a convex function
(see for instance, \cite{Berestycki-15}).

\begin{example}
For $\Lg=\Delta$ in $\mathbb{R}^2$ and $V=0$,
it is known that $\lamstr(V)$ strictly monotone at $V$ on the right,
 but not strictly monotone at $V$. 
\end{example}

Throughout the paper, with the exception of \cref{T2.2},
we consider potential functions $V$
that are Borel measurable and bounded from below.
We also assume that $\lamstr(V)$ is finite.
Let us begin with the following equivalence between the strict right monotonicity
of the principal eigenvalue and the criticality of the operator \cite{ABS}.
See also \cite[Theorems~4.3.3 and 7.3.6]{Pinsky} for similar results
concerning operators with regular coefficients.

\begin{theorem}\label{T2.1}
Let $\cD=\Rd$. The following are equivalent.
\begin{itemize}
\item[(a)]
A function $\Psi\in \Sobl^{2,d}(\cD)$ is a ground state for
$\Lg_V-\lambda$, with $\lambda\in\RR$.

\item[(b)]
The twisted process corresponding to the eigenpair $(\Psi,\lambda)$ is recurrent.

\item[(c)]
$\lamstr (V)$ is strictly monotone at $V$ on the right.

\item[(d)]
For any $r>0$, the eigenpair $(\Psi,\lambda)$ satisfies
\begin{equation}\label{ET2.1A}
\Psi(x)\,=\, \Exp_x\Bigl[e^{\int_0^{\uuptau_r}(V(X_s)-\lambda)\, \D{s}}\,
\Psi(X_{\uuptau_r}) \Ind_{\{\uuptau_r<\infty\}}\Bigr]\,,
\quad x\in B^c_r\,,
\end{equation}
where, as defined earlier,
$\uuptau_r$ denotes the first hitting time to the ball $B_r$.
\end{itemize}
\end{theorem}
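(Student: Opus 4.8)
The plan is to prove the three equivalences (a)$\,\Leftrightarrow\,$(d), (d)$\,\Leftrightarrow\,$(b) and (a)$\,\Leftrightarrow\,$(c), which between them cover all four statements. Throughout write $\psi=\log\Psi$, and observe that each of (a), (b), (d) entails $\lambda=\lamstr(V)$ (for (a) because a critical operator has vanishing generalized principal eigenvalue, and for (b), (d) through their equivalence with (a)); so when proving (c)$\,\Rightarrow\,$(a) we may take $\lambda=\lamstr(V)$. The computational backbone is the process $Z_t\df\exp\bigl(\int_0^t(V(X_s)-\lambda)\,\D s\bigr)\Psi(X_t)$. Applying It\^o's formula and using $\Lg\Psi+(V-\lambda)\Psi=0$ gives $\D Z_t=Z_t\,\grad\psi(X_t)\transp\upsigma(X_t)\,\D W_t$, so $Z$ is a positive local martingale — hence a supermartingale — with $Z_0=\Psi(x)$, and by Girsanov's theorem the stopped Dol\'eans exponential $Z_{\cdot\wedge\uuptau_r}/\Psi(x)$ is precisely the density transforming $X$, up to $\uuptau_r$, into the twisted process $Y$ of \cref{twisted}. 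Localising also by the exit times $\uptau_R$ and passing carefully to the limit $R\to\infty$ (using that $X$ is conservative by (A2)), one obtains, for $x\in B_r^c$, that the right-hand side $\hat\Psi(x)$ of \cref{ET2.1A} equals $\Psi(x)\,\Prob^Y_x(\uuptau_r<\infty)$, where $\Prob^Y_x$ denotes the law of $Y$ started at $x$; note that $\hat\Psi\le\Psi$ already follows from the supermartingale inequality alone.

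For (a)$\,\Rightarrow\,$(d), fix $r>0$. By nondegeneracy $\Prob_x(\uuptau_r<\infty)>0$, so $\hat\Psi>0$ on $B_r^c$, and by the strong Markov property together with interior elliptic estimates $\hat\Psi$ solves $\Lg_V u=\lambda u$ in $B_r^c$ and is continuous up to $\partial B_r$ with $\hat\Psi=\Psi$ there. Since the ground state $\Psi$ has minimal growth at infinity, the comparison recalled in the remark after \cref{MGI}, applied to the positive supersolution $\hat\Psi$, gives $\Psi\le\hat\Psi$ on $B_r^c$; with $\hat\Psi\le\Psi$ this yields \cref{ET2.1A}. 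For (d)$\,\Rightarrow\,$(a), let $K$ be compact and $v>0$ a supersolution of $\Lg_V v\le\lambda v$ on $\cD\setminus K$, continuous up to $\partial K$; pick $r$ with $K\Subset B_r$ and set $\kappa\df\inf_{\partial B_r}v/\Psi>0$. Optional stopping applied to the supermartingale $e^{\int_0^t(V(X_s)-\lambda)\,\D s}v(X_t)$ at $\uuptau_r\wedge\uptau_R\wedge T$, followed by $R,T\to\infty$, gives $v(x)\ge\Exp_x\bigl[e^{\int_0^{\uuptau_r}(V(X_s)-\lambda)\,\D s}v(X_{\uuptau_r})\Ind_{\{\uuptau_r<\infty\}}\bigr]\ge\kappa\,\hat\Psi(x)=\kappa\,\Psi(x)$ on $B_r^c$, by \cref{ET2.1A}; hence $\Psi$ has minimal growth at infinity, i.e.\ it is the ground state of $\Lg_V-\lambda$. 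For (d)$\,\Leftrightarrow\,$(b), the identity $\hat\Psi(x)=\Psi(x)\,\Prob^Y_x(\uuptau_r<\infty)$ shows that \cref{ET2.1A} holds for all $r$ and all $x\in B_r^c$ if and only if $\Prob^Y_x(\uuptau_r<\infty)=1$ for all $r,x$; this — which also precludes explosion of $Y$ — is exactly recurrence of $Y$, by the domain-independence of recurrence for nondegenerate diffusions \cite{book}.

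For (a)$\,\Leftrightarrow\,$(c) we show that both are equivalent to criticality of $\Lg_V-\lamstr(V)$. If $\Psi$ is a ground state of $\Lg_V-\lambda$, this operator is critical and $\lambda=\lamstr(V)$, so $\Lg_V-\lamstr(V)$ is critical; conversely, if $\Lg_V-\lamstr(V)$ is critical then $\sC_{\Lg_V-\lamstr(V)}(\Rd)$ is one-dimensional and its unique (up to scaling) element is the ground state, which, since $\lambda=\lamstr(V)$, is the given $\Psi$, so (a) holds. It remains to identify criticality of $\Lg_V-\lamstr(V)$ with (c). A critical operator has, up to scaling, a unique positive supersolution, namely its ground state; if (c) were to fail there would be $h\in\cC^+_0(\Rd)$ with $\lamstr(V+h)=\lamstr(V)$, and a positive eigenfunction of $\Lg_{V+h}$ at $\lamstr(V)$ (which exists by \cite[Theorem~1.4]{Berestycki-15}, as $\lamstr(V+h)$ is then finite) would be a positive supersolution of $\Lg_V u=\lamstr(V)u$ that is not a solution — a contradiction. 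For the converse, a subcritical $\Lg_V-\lamstr(V)$ has a minimal positive Green's function $G$; choosing $h\in\cC^+_0(\Rd)$ supported in a fixed ball with $\norm{h}_\infty$ small enough that $\mathcal K f\df\int_{\Rd}G(\cdot,y)h(y)f(y)\,\D y$ is a contraction on $\Cc_b(\Rd)$ — possible since $x\mapsto\int G(x,y)h(y)\,\D y$ is finite, continuous and vanishes at infinity — the Neumann series $\varphi\df\sum_{k\ge0}\mathcal K^k\Psi^*$ converges to a positive $\varphi$ with $\Lg_{V+h}\varphi=\lamstr(V)\varphi$, so $\lamstr(V+h)=\lamstr(V)$ and (c) fails.

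I expect the main obstacle to be making the change of variables rigorous at the stated (merely local) regularity of $a,b,V$: the twisted process $Y$ may a priori explode before reaching $B_r$, so deriving $\hat\Psi(x)=\Psi(x)\,\Prob^Y_x(\uuptau_r<\infty)$ from the stopped martingale requires a careful double localisation (by $\uuptau_r$ and $\uptau_R$) together with a monotone/Fatou argument that correctly accounts for the mass $\Psi(x)\,\Prob^Y_x(\uuptau_r=\infty)$ carried off to infinity or lost to explosion. A secondary technical point is the perturbation step in the converse half of (a)$\,\Leftrightarrow\,$(c), which relies on the local integrability and the decay at infinity of the Green's function $G$.
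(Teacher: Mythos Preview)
Your proof is correct and your implication (d)$\Rightarrow$(a) is essentially identical to the paper's argument for (b)$\Rightarrow$(a). The remaining implications, however, proceed along a genuinely different route. The paper does not prove (b)$\Leftrightarrow$(c)$\Leftrightarrow$(d) at all but simply imports these from \cite{ABS}; for the one direction it does prove, (a)$\Rightarrow$(b), it relies on the auxiliary construction of \cref{L3.1}/\cref{C3.2}: one first manufactures a perturbed potential $V+\delta\Ind_\sB$ whose principal eigenfunction $\Psi^*$ is known to satisfy the stochastic representation \cref{PT2.1A}, and then uses minimal growth together with the strong maximum principle to force $\Psi=\Psi^*$ and $\delta=0$. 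Your (a)$\Rightarrow$(d) bypasses this machinery entirely by comparing $\Psi$ directly with the Feynman--Kac function $\hat\Psi$ via the boundary form of minimal growth; this is more elementary and self-contained, though it requires you to verify that $\hat\Psi$ is a genuine solution in $B_r^c$ continuous up to $\partial B_r$ (both are true under the standing assumptions, but deserve a line of justification). Your (d)$\Leftrightarrow$(b) via the Girsanov identity $\hat\Psi=\Psi\cdot\Prob^Y(\uuptau_r<\infty)$ and your (a)$\Leftrightarrow$(c) via the Green's function perturbation are the standard arguments one would expect to find in \cite{ABS} or \cite{Pinsky}; in the latter, your contraction estimate should be carried out in a $\Psi^*$-weighted sup-norm rather than in $\Cc_b(\Rd)$, since $\int G(\cdot,y)h(y)\,\D y$ need not be bounded but is dominated by a positive solution via minimal growth of $G(\cdot,y)$. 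The paper's route has the advantage that \cref{L3.1} and \cref{C3.2} are reused repeatedly later (in \cref{T2.3,T3.1}); yours has the advantage of being independent of \cite{ABS} and of not requiring the somewhat delicate limiting construction in \cref{L3.1}.
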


We often exploit the above equivalence
between strict monotonicity and criticality.
To state our next result we need some additional notation. Let 
\begin{equation*}
\Lg_k f \,=\, a^{ij}_k(x)\,\partial_{ij} f(x)
+ b^{i}_k(x)\, \partial_{i} f(x)\,,\quad k=1,2\,.
\end{equation*}
We assume that $(a_k, b_k)$, $k=1,2$, satisfies (A1)--(A3).
We say that $\Lg_1$ is a \emph{small perturbation} \cite{Pinchover-88}
of $\Lg_2$ if $\norm{a_1(x)-a_2(x)} + \abs{b_1(x)-b_2(x)}=0$ outside some compact set.
The first main result of this section is the following theorem
which gives a partial answer (see also \cref{T2.3}) to the open question posed
by Y.~Pinchover in
\cite[Problem~5]{Pinchover-07}.
Simplifying the notation, in the sequel we sometimes denote
by $\lamstr_k$ (instead of $\lamstr(\Lg_k, V_k)$) the principal eigenvalue
of the operator $\Lg_k + V_k$, $k = 1,2$.

\begin{theorem}\label{T2.2}
Let $\cD$ be a domain in $\mathbb{R}^d$, $d \ge 1$.
Consider two Schr\"odinger operators defined on $\cD$ of the form 
\begin{equation*}
P_{k} \,\df\, \Lg_{k} + V_k\,, \quad k = 1, 2, 
\end{equation*}
where $a_k$, $k=1,2$, are continuous and satisfy (A3),
$b_k, V_k \in \Lpl^{\infty}(\cD)$,
and $V_2 \ge V_1$ outside a compact set in $\cD$.
In addition, assume that $\Lg_1$ is a small perturbation of $\Lg_2$ in $\cD$, and 
\begin{enumerate}
\item
The operator $P_1-\lamstr_1$ is critical in $\cD$.
Denote by $\Psi_1^{*}$ its ground state. 

\item
$\lamstr_2 \le \lamstr_1$ and there exists $\Psi\in\Sobl^{2, d}(\cD)$,
with $\Psi^+\ne 0$, satisfying
\begin{equation}\label{ET2.2A}
\Lg_2\Psi + V_2\Psi \,\ge\, \lamstr_1\Psi\,,
\end{equation}
and
\begin{equation}\label{ET2.2B}
\Psi^+(x)\,\le\, C\, \Psi^{*}_1(x)\quad \text{for all\ } x\in\cD\,,
\end{equation}
for some constant $C>0$.
\end{enumerate}
Then the operator $P_2-\lamstr_2$ is critical in $\cD$,
$\lamstr_1=\lamstr_2$, and $\Psi$ is its ground state. 
\end{theorem}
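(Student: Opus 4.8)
The plan is to convert the subsolution $\Psi$ into a genuine ground state of $P_2-\lamstr_2$ by a monotone iteration controlled by $\Psi^{*}_1$, and to recover $\lamstr_1=\lamstr_2$ only at the very end. First I would reduce to the case $\Psi\ge 0$: since $0$ is a subsolution of $P_2-\lamstr_1$ and the pointwise maximum of two $\Sobl^{2,d}$-subsolutions of an operator of the form $\Lg+V$ is again such a subsolution, $\Psi^+$ is a nonnegative, nonzero subsolution of $P_2-\lamstr_1$ with $\Psi^+\le C\Psi^{*}_1$ by \cref{ET2.2B}. Put $P\df\Lg_2+V_2-\lamstr_2$. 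Because $\lamstr_2\le\lamstr_1$ and $\Psi^+\ge 0$, \cref{ET2.2A} yields $(\Lg_2+V_2-\lamstr_2)\Psi^+\ge(\lamstr_1-\lamstr_2)\Psi^+\ge 0$, so $\Psi^+$ is also a subsolution of $P$; moreover $\lamstr(\Lg_2,V_2-\lamstr_2)=0$, hence $-P\ge 0$ in $\cD$ and $P$ is either critical or subcritical. The goal becomes to show that $P$ is critical with ground state $\Psi^+$. Throughout, fix a compact $K\subset\cD$ — which we may take to be the closure of some $\cD_i\in\mathfrak{D}$ — outside which $\Lg_1=\Lg_2$ and $V_1\le V_2$, and fix $y_0\in K$.

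The heart of the proof is to exclude subcriticality of $P$. Suppose it were subcritical, and let $G\df G^{\cD}_{P}(\cdot\,,y_0)$ be its minimal positive Green's function, a positive solution of $P$ in $\cD\setminus\{y_0\}$ having minimal growth at infinity in $\cD$. Writing $Q\df P_1-\lamstr_1$, on $\cD\setminus K$ one has
\begin{equation*}
Q G \,=\, P G - \bigl((V_2-V_1)+(\lamstr_1-\lamstr_2)\bigr)G \,\le\, 0\,,
\end{equation*}
so $G$ is a positive supersolution of $Q$ in a neighbourhood of infinity; since $\Psi^{*}_1$ is the ground state of the critical operator $Q$, minimal growth gives $\kappa\,\Psi^{*}_1\le G$ on $\Bar\cD_k^c\cap\cD$ for some $k$ and some $\kappa>0$. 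Combining this with $\Psi^+\le C\Psi^{*}_1$, the continuity of $\Psi^+$ and of $G$ on $\cD\setminus\{y_0\}$, and $G(x)\to\infty$ as $x\to y_0$, we get $\Psi^+\le C_1 G$ on $\cD\setminus\{y_0\}$. Now solve the Dirichlet problems $P u_j=0$ in $\cD_j$, $u_j=\Psi^+$ on $\partial\cD_j$; these are well posed and obey a comparison principle since $\lamstr(\Lg_2,V_2-\lamstr_2;\cD_j)>\lamstr(\Lg_2,V_2-\lamstr_2;\cD)=0$. Comparing against $\Psi^+$ (a subsolution) and against $C_1 G$ (a supersolution in $\cD_j$; its singularity at $y_0$ is harmless) gives $\Psi^+\le u_j\le u_{j+1}\le C_1 G$, and by elliptic regularity $u\df\lim_j u_j$ is a positive solution of $P$ in $\cD$ with $0\not\equiv\Psi^+\le u\le C_1 G$. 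Hence $u$ has minimal growth at infinity, so $u$ is a ground state of $P$, contradicting subcriticality. Therefore $P=\Lg_2+V_2-\lamstr_2$ is critical.

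It remains to identify $\Psi$. Since $P$ is critical, $\dim\sC_{P}(\cD)=1$ and $P$ has, up to a positive multiplicative constant, a unique positive supersolution; let $u$ be a ground state of $P$. From $\Psi^+\le C_1 G$ and $G\asymp u$ near infinity (both having minimal growth) we obtain $\Psi^+\le c\,u$ on $\cD$ for some finite $c>0$, so $c\,u-\Psi^+$ is a nonnegative supersolution of $P$; by the strong maximum principle it is either identically $0$ or strictly positive, and in the latter case uniqueness of the positive supersolution forces it to be a positive multiple of $u$. Either way $\Psi^+$ is a positive multiple of $u$, hence $\Psi^+>0$ everywhere, so $\Psi=\Psi^+$ is a ground state of $P=P_2-\lamstr_2$. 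Finally, inserting $(\Lg_2+V_2-\lamstr_2)\Psi=0$ into \cref{ET2.2A} gives $(\lamstr_2-\lamstr_1)\Psi\ge 0$, so $\lamstr_2\ge\lamstr_1$; together with the hypothesis $\lamstr_2\le\lamstr_1$ this gives $\lamstr_1=\lamstr_2$, as required.

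I expect the main obstacle to be supplying, at the stated level of generality — $\cD$ an arbitrary domain, $a_k$ merely continuous and locally nondegenerate, $b_k,V_k\in\Lpl^\infty(\cD)$ — the criticality-theory ingredients used above: that the minimal Green's function $G^{\cD}_{P}(\cdot\,,y_0)$ has minimal growth at infinity, that the Dirichlet problems on the $\cD_j$ are solvable with a valid comparison principle, that the increasing limit $u$ is again a solution, and the uniqueness of the positive supersolution in the critical case. This is precisely where the stochastic representation of positive solutions from \cite{ABS} is used: it furnishes the Feynman–Kac identities underlying the minimal-growth statements and the control of the monotone limit, bypassing the variational null-sequence arguments that are unavailable for nonsymmetric operators.
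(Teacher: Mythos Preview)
Your strategy differs from the paper's. The paper does not argue by contradiction through the Green's function and a monotone iteration; instead it picks a generalized principal eigenfunction $\Psi^*_2$ for $\lamstr_2$, notes that in $K^c$ it is a positive supersolution of $P_1-\lamstr_1$, so by the minimal growth of $\Psi^*_1$ one has $\kappa\Psi^*_1\le\Psi^*_2$ near infinity, and hence $\Hat\kappa\df\sup_\cD\Psi/\Psi^*_2<\infty$. A touching argument then shows the supremum is attained: if $\Phi=\Hat\kappa\Psi^*_2-\Psi>0$ everywhere, then $\Phi$ is a positive supersolution of $P_1-\lamstr_1$ in $K^c$, so $\Phi\ge\kappa_1\Psi^*_1\ge(\kappa_1/C)\Psi$ near infinity, contradicting the definition of $\Hat\kappa$. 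The strong maximum principle forces $\Phi\equiv 0$, whence $\lamstr_1=\lamstr_2$ directly from \cref{ET2.2A}, and minimal growth of $\Psi^*_2$ follows by one more appeal to minimal growth of $\Psi^*_1$. The argument is purely PDE-based; contrary to your closing paragraph, no stochastic representation from \cite{ABS} enters the proof of \cref{T2.2}.

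Your argument has two genuine gaps. First, the pointwise maximum of two $\Sobl^{2,d}$ functions need not lie in $\Sobl^{2,d}$ (e.g.\ $\Psi=x_1$), so the assertion that $\Psi^+$ is a strong subsolution fails. The monotone iteration step can be repaired by comparing $u_j$ with $\Psi$ rather than $\Psi^+$: one has $(\Lg_2+V_2-\lamstr_1)(u_j-\Psi)\le0$ in $\cD_j$ with $u_j-\Psi=\Psi^-\ge0$ on $\partial\cD_j$, hence $u_j\ge\Psi$; combined with $u_j\ge0$ this gives $u_j\ge\Psi^+$. Second, your identification step breaks down: you invoke $G$ after having shown it does not exist, and you then claim ``$cu-\Psi^+$ is a nonnegative supersolution of $P$'', which again requires $\Psi^+\in\Sobl^{2,d}$. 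If you work with $\Psi$ instead, $cu-\Psi$ is only a supersolution of $\Lg_2+V_2-\lamstr_1$, not of $P=\Lg_2+V_2-\lamstr_2$, so the uniqueness-of-positive-supersolution property of the critical operator $P$ does not apply before you know $\lamstr_1=\lamstr_2$. A clean repair is to run the paper's touching argument with your ground state $u$ in place of $\Psi^*_2$: since $u$ is a supersolution of $P_1-\lamstr_1$ in $K^c$, you again get $\Psi^+\le cu$, and the same $\sup$-attained/strong-maximum-principle step yields $\Psi=c_0 u$ and $\lamstr_1=\lamstr_2$ simultaneously.
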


\begin{remark}
One can not expect any pair $V_1$, $V_2$ to satisfy the hypotheses of
\cref{T2.2}, even if we restrict $V_1$ and $V_2$ to have compact support, and
consider the same second-order operator $\Lg=\Lg_1=\Lg_2$.
To see this, let us take $V_2\lneqq V_1$, both of them compactly supported,
and suppose that $\Lg+V_1-\lamstr_1$ is critical.
Then it is not possible to have $\lamstr_1=\lamstr_2$ and $\Psi^*_2\le C \Psi^*_1$,
for some constant $C$.
Indeed, if $\Tilde\Lg$ denotes the generator of the twisted process corresponding
to the eigenpair $(\Psi^*_1, \lamstr_1)$, and
$\lamstr_1=\lamstr_2$, then for $\Phi=\frac{\Psi^*_2}{\Psi^*_1}$ we have
\begin{equation*}
\Tilde\Lg \Phi \,=\, (V_1-V_2)\Phi \,\ge\, 0\, \,.
\end{equation*}
Since the twisted process $Y_s$ corresponding to $(\Psi^*_1, \lamstr_1)$ is recurrent
by \cref{T2.1},
and $\Phi(Y_s)$ is a bounded submartingale, $\Phi$ must be constant.
This implies $(V_1-V_2)\Psi^*_1=0$, which is a contradiction.
 
More precisely, one can find a relation between $V_1$ and $V_2$ as follows.
Suppose $\cD=\Rd$ and the operators $\Lg+ V_i$, $V_i\in\Lpl^{\infty}(\Rd)$,
are critical in $\Rd$ with principal eigenfunctions
$\Psi^*_i$, $i=1,2$. Then by \cref{T2.1} we know that for any $r>0$ we have
\begin{equation}\label{ER2.4A}
\Psi^*_i(x)\,=\, \Exp_x\Bigl[e^{\int_0^{\uuptau_r}V_i(X_s)\, \D{s}}\,
\Psi^*_i(X_{\uuptau_r}) \Ind_{\{\uuptau_r<\infty\}}\Bigr]\,,
\quad x\in B^c_r\,.
\end{equation}
Now if \cref{ET2.2B} holds, i.e., $\Psi^*_2\le C\Psi^*_1$ in $\Rd$,
then by \cref{ER2.4A}, for every $r>0$ we can find a constant $C_r$ such that
\begin{equation*}
\Exp_x\Bigl[e^{\int_0^{\uuptau_r}V_2(X_s)\, \D{s}}\, 
\Ind_{\{\uuptau_r<\infty\}}\Bigr]\,\le\, C_r
\Exp_x\Bigl[e^{\int_0^{\uuptau_r}V_1(X_s)\, \D{s}}\,
\Ind_{\{\uuptau_r<\infty\}}\Bigr]\,,\quad x\in B^c_r\,.
\end{equation*}
This in particular, provides a necessary condition on the potentials
for Liouville type theorems like \cref{T2.2} to hold.
\end{remark}

For the rest of the results in this section we let $\cD=\Rd$.

\begin{theorem}\label{T2.3}
Consider two Schr\"odinger operators defined on $\Rd$ of the form 
\begin{equation*}
P_{k} \,\df\, \Lg_k + V_k\,, \quad k = 1, 2, 
\end{equation*}
whose coefficients satisfy (A1)--(A3), and
$V_k \in \Lpl^\infty (\Rd)$.
Let
\begin{equation*}
\Tilde{V}(x)\,\df\,\max\{V_1(x), V_2(x)\}\,.
\end{equation*}
Suppose that there exists a positive $\Tilde\Phi\in\Sobl^{2, d}(\Rd)$ and
a compact set $K$ such that
\begin{equation}\label{ET2.3A}
\Lg_1= \Lg_2, \quad \Lg_2 \Tilde\Phi + \Tilde{V}\Tilde\Phi \,\le\,
\lamstr_1 \Tilde\Phi \quad \text{in\ } K^c\,.
\end{equation}
In addition, assume that 
\begin{enumerate}
\item
The operator $P_1-\lamstr_1$ is critical in $\Rd$.
Denote by $\Psi_1^{*}$ its ground state. 
\item
$\lamstr_2 \le \lamstr_1$, and there exists subsolution $\Psi\in\Sobl^{2, d}(\Rd)$,
which may be sign-changing but $\Psi^+\ne 0$, that satisfies
\begin{equation}\label{ET2.3B}
\Lg_2\Psi + V_2\Psi \,\ge\, \lamstr_1\Psi\,,
\end{equation}
and for some constant $C>0$, 
\begin{equation*}
\Psi^+(x)\,\le\, C\, \Psi^{*}_1(x)\quad \text{for all\ } x\in\Rd\,.
\end{equation*}
\end{enumerate}
Then the operator $P_2-\lamstr_2$ is critical in $\Rd$,
$\lamstr_1=\lamstr_2$, and $\Psi$ is its ground state. 
\end{theorem}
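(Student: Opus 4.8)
The plan is to prove that $L\df\Lg_2+V_2-\lamstr_1$ is critical in $\Rd$ with $\Psi$ (up to a positive scalar) as its ground state. Granting this, $\Psi>0$, and since a critical operator has a one-dimensional cone of positive supersolutions—so its generalized principal eigenvalue vanishes—one gets $\lamstr(\Lg_2,V_2-\lamstr_1)=0$, i.e.\ $\lamstr_2=\lamstr_1$, whence $P_2-\lamstr_2=L$ is critical with ground state $\Psi$. Two preliminary observations: by Kato's inequality $\Psi^+$ is a nonnegative subsolution of $L$ in $\Rd$, and, because $\Tilde V\ge V_2$ and $\Psi^+\ge 0$, also a subsolution of $\Lg_2+\Tilde V-\lamstr_1$ in $\Rd$; and, since $\lamstr_2\le\lamstr_1$, we have $-L\ge 0$ in $\Rd$, so $L$ admits a positive solution $u^{*}$ on $\Rd$ (a Harnack limit of positive solutions on the $\cD_j$) and is either critical or subcritical—the subcritical case being what must be excluded.

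The heart of the argument is a stochastic representation for $\Psi^+$. By \cref{T2.1}, criticality of $P_1-\lamstr_1$ makes the twisted process $Y$ of $(\Psi^{*}_1,\lamstr_1)$ recurrent; since $\Lg_1=\Lg_2$ in $K^c$, the generator $\Tilde\Lg$ of $Y$ agrees there with the $\Psi^{*}_1$-Doob transform of $\Lg_2+V_1-\lamstr_1$. Fix $r$ with $B_r\supset K$ and put $U\df\Tilde V-V_1=(V_2-V_1)^+\ge 0$. Under this transform the supersolution $\Tilde\Phi$ becomes a positive supersolution $\Tilde\Phi/\Psi^{*}_1$ of $\Tilde\Lg+U$ in $K^c$, and recurrence of $Y$ then yields $\Exp^{Y}_x\bigl[\exp(\int_0^{\uuptau_r}U(Y_s)\,\D{s})\bigr]<\infty$ for $x\in B^c_r$. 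Feeding this integrability into the submartingale property of $s\mapsto\exp(\int_0^{s}(V_2-\lamstr_1)(X_t)\,\D{t})\,\Psi^+(X_s)$ for the $\Lg_2$-diffusion $X$, a change of measure back from $Y$, and a localization over $B_n\setminus B_r$, I obtain, for $x\in B^c_r$,
\begin{equation*}
\Psi^+(x)\,\le\,\Exp^{\Lg_2}_x\Bigl[\exp\Bigl(\int_0^{\uuptau_r}(V_2-\lamstr_1)(X_s)\,\D{s}\Bigr)\Psi^+(X_{\uuptau_r})\Ind_{\{\uuptau_r<\infty\}}\Bigr]\eqqcolon R(x)\,,
\end{equation*}
with $R$ finite, $R\ge\Psi^+\gneqq 0$, $R=\Psi^+$ on $\partial B_r$, $LR=0$ in $B^c_r$, and $R$ of minimal growth at infinity in $B^c_r$.

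Excluding subcriticality of $L$ is the step I expect to be the main obstacle. If $L$ were subcritical, then by \cref{T2.1} the twist of every positive solution of $L$ on $\Rd$ would be transient (recurrence of such a twist would make that solution a ground state); using the minimal-growth property of $R$ to compare $\Psi^+$ with a positive solution of $L$ would exhibit the corresponding ratio as a bounded nonnegative subsolution of a \emph{transient} diffusion generator, while the domination $\Psi^+\le C\Psi^{*}_1$ by the ground state of the \emph{critical} operator $P_1-\lamstr_1$—routed once more through the recurrent process $Y$ and the supersolution $\Tilde\Phi$—should force this ratio, hence $\Psi^+$, to vanish, contradicting $\Psi^+\gneqq 0$. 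It is precisely here that criticality of $P_1-\lamstr_1$ (not merely its nonnegativity), together with $\Tilde\Phi$, is genuinely used; this plays the role that the existence of a null sequence plays in Pinchover's variational proof, and spelling it out is the delicate part of the whole argument.

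Granting that $L$ is critical, let $u^{*}$ be its ground state; then $\Hat\Lg\df\Lg_2+2a_2\grad\log u^{*}\cdot\grad$ is recurrent, and, since two positive solutions of minimal growth at infinity are comparable, the comparison of $R$ with $u^{*}$ shows $v\df\Psi^+/u^{*}$ is bounded on $\Rd$. As $v$ is a bounded nonnegative $\Hat\Lg$-subsolution and every nonnegative superharmonic function of a recurrent diffusion is constant (apply this to $\sup v-v$; cf.~\cite{Pinsky}), $v$ is a constant, and it is positive because $\Psi^+\gneqq 0$. Hence $\Psi=\Psi^+=c\,u^{*}>0$ and $L\Psi=0$, so $\Psi$ is a ground state of $L$; combined with the first paragraph this gives $\lamstr_1=\lamstr_2$ and that $P_2-\lamstr_2=L$ is critical with ground state $\Psi$.
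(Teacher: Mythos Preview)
Your overall architecture is close to the paper's, and the final step---once you know the operator is critical, pass to the twisted (recurrent) process and conclude that the bounded subharmonic ratio is constant---is exactly what the paper does. But the step you yourself flag as ``the main obstacle'' is a genuine gap, and your sketch for it does not work. Knowing that the twist of every positive solution of a subcritical $L$ is transient gives you nothing: transient diffusions can certainly carry nontrivial bounded nonnegative subharmonic functions, so there is no mechanism by which ``bounded subsolution of a transient generator $+$ domination by $\Psi_1^{*}$'' forces $\Psi^{+}=0$. You have no candidate ground state of $L$ to compare with, and the minimal-growth function $R$ you build lives only on $B_r^c$, so it does not furnish one.

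The paper sidesteps this difficulty entirely, and this is the idea you are missing. Rather than trying to prove directly that $\Lg_2+V_2-\lamstr_1$ is critical, it \emph{forces} criticality by a compactly supported perturbation: pick a ball $\sB\subset K$ on which $\Psi>0$; by \cref{L3.1} there exists $\delta\ge 0$ with $\lamstr(V_2+\delta\Ind_{\sB})=\lamstr_2$ and $\lamstr$ strictly right monotone at $V_2+\delta\Ind_{\sB}$, so $\Lg_2+V_2+\delta\Ind_{\sB}-\lamstr_2$ is critical with ground state $\Phi_\delta$. Since $\Psi>0$ on $\sB$, the subsolution inequality \cref{ET2.3B} is \emph{preserved} for the perturbed potential, and the stochastic representation of $\Phi_\delta$ (\cref{T2.1}) together with \cref{L3.3} gives $\Psi\le\kappa_1\Phi_\delta$. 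Now the recurrent twist by $\Phi_\delta$ is available from the outset: the bounded-submartingale argument first yields $\lamstr_1=\lamstr_2$ (if not, $\widehat\Phi=\Psi/\Phi_\delta$ would be forced to $\le 0$ by the exponential factor $e^{-(\lamstr_1-\lamstr_2)T}$), and then that $\widehat\Phi$ is constant, so $\Psi=\kappa_2\Phi_\delta>0$. Substituting back into \cref{ET2.3B} forces $\delta=0$, and you are done. The point is that \cref{L3.1} manufactures the recurrent twisted process you need \emph{before} you know anything about criticality of $L$.

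Two smaller remarks. First, invoking Kato's inequality for $\Psi^{+}$ is unnecessary and a bit delicate in the nondivergence $\Sobl^{2,d}$ setting; the paper works with $\Psi$ itself in the It\^o--Krylov formula and only replaces $\Psi$ by $\Psi^{+}$ at the stopping boundary (see \cref{L3.3}). Second, your derivation of the stochastic inequality for $\Psi^{+}$ is essentially the content of \cref{L3.3}, whose proof requires care: one must kill the term at time $T$ using the Dirichlet eigenpair on $B_{R+1}$, and then the term at $\uptau_R$ using the stochastic representation of $\Psi_1^{*}$ together with the supersolution $\Tilde\Phi$ in \cref{ET2.3A} and dominated convergence.
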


It should be noted that the second display in \cref{ET2.3A} is an assumption on
the operators; compare to \cite[Theorem~1.7]{Pinchover-07}.
However, there is a large family of elliptic operators for which \cref{ET2.3A} holds,
as the following examples show.

\begin{example}\label{EG2.4}
Note that \cref{ET2.3A} is satisfied if $V_1-V_2$ has a fixed sign outside
a compact set $K$. If $\Tilde{V}=V_1$ in $K^c$ we can choose $\Tilde\Phi=\Psi^*_1$.
On the other hand, if $\Tilde{V}=V_2$ in $K^c$, we know from
\cite[Theorem~1.4]{Berestycki-15} that there exists a positive $\Phi_1$ satisfying
\begin{equation*}
\Lg_2\Phi_1 + V_2\Phi_1\,=\, \lamstr_1\Phi_1 \quad \text{in\ } \Rd\,.
\end{equation*}
Hence we can take $\Tilde\Phi=\Phi_1$ in $K^c$.
\end{example}

\begin{example}\label{EG2.5}
Let us now give an example where the sign of $V_1 - V_2$ may not be fixed
outside some compact set.
Consider $P_1 = \Delta + V_1$ in $\mathbb{R}^d$, $d \ge 3$,
such that $V_1$ has compact support and $P_1$ is critical in $\mathbb{R}^d$ with
$\lamstr=0$.
Now let 
\begin{equation*}
\Lg_2\,\df\, \Delta + b^i \partial_i\,,
\end{equation*}
where the vector field $b$ has compact support in $\Rd$. 
Let a nonnegative $\tilde{W}$
be a \emph{small perturbation}
(see \cite[Definition~2.1, Example~2.2]{Pinchover-89})
with respect to the operator $-\Delta$.
Then there exists a positive $\varepsilon$
such that $-\Delta \Psi-\varepsilon \tilde{W} \Psi=0$ has a positive solution
$\Psi$ in $B_r^c$,
$r>0$ \cite[Lemma~2.4]{Pinchover-89}.
Therefore, if we choose a potential
$V_2$ which decays faster than $\Tilde{W}$ at infinity, i.e.,
for every $\delta>0$ there exists a compact $K_\delta$ such that
$\abs{V_2(x)}\le \delta \tilde{W}(x)$ for $x\in K^c_\delta$, it is easy to see that,
by choosing $\delta<\varepsilon$, we have
\begin{equation*}
\Lg_2 \Psi + \Tilde{V}\Psi\,\le\, \Delta \Psi + \abs{V_2}\Psi\;\le \; 0
\quad \text{on the complement of a compact set in\ } \Rd\,.
\end{equation*}
Therefore \cref{ET2.3A} holds. 

There are several choices for a small perturbation $\tilde{W}$
(see \cite[Example~2.2]{Pinchover-89}).
For instance, we could take any nonnegative $\tilde{W}$ which is locally H\"{o}lder
continuous and satisfies
\begin{equation*}
(1+\abs{x})^2\tilde{W}(x)\le \varphi(\abs{x}) \quad \forall \; x\in\Rd\,,
\quad \text{and}
\quad \int_{r_0}^\infty \frac{1}{r}\varphi(r)\,\D{r}\;<\; \infty\,, \quad r_0>0\,.
\end{equation*}
\end{example}

\begin{example}\label{EG2.6}
We define for $k = 1, 2$,
\begin{equation*}
P_k \,\df\, \Delta + b^i_k \partial_i + V_k \quad \text{in\ } \Rd\,,\ d \ge 3\,,
\end{equation*}
with the vector fields $b_k$ smooth, and satisfying 
\begin{equation*}
\abs{b_k(x)} \,\le\, \frac{C}{(1 + |x|)^{1 + \varepsilon}}\,,
\end{equation*}
for some constants $C > 0$ and $\varepsilon >0$. It is known that the operator
$\Lg_k \df \Delta + b^i_k \partial_i $ is subcritical;
this follows from
the fact that $\bf{1}$ is a positive solution,
together with the above decay estimate on $b_k$.
Hence there exists a minimal Green's function for $\Lg_k$.
Also, the Green functions $G_{-\Delta}$ and $G_{-\Lg_k}$
are comparable \cite[Theorem~1]{Ancona-97}.
Therefore, a small perturbation of $\Delta$ is also a
small perturbation of $\Lg_k$ \cite{Pinchover-89}.
Let $\abs{b_1(x)-b_2(x)}=0$ outside a compact set.
As earlier, we suppose that $P_1$ is critical and $\lamstr_1=0$.
Assume that $\Tilde V = \mbox{max} \{V_1, V_2 \}$ decays faster than
$(1 + |x|)^{-2 - \epsilon} $ at infinity.
In particular, we may choose $V_2(x)=(1 + |x|)^{-2 - \varepsilon}$.
Then $\Tilde V$ satisfies the estimate above,
and hence, as before, there exists a positive supersolution $\Psi$ to
$\Lg_2 \Psi + \Tilde{V}\Psi\le 0$ 
on the complement of some compact set in $\Rd$.
Thus \cref{ET2.3A} holds. 
\end{example}

\begin{remark}\label{R2.5}
Recall that the criticality of $\Lg_V-\lamstr$ is equivalent
to the strict monotonicity of $\lamstr$ at $V$ on the right by \cref{T2.1}.
However, strict right monotonicity does not necessarily imply strict monotonicity
of $\lamstr$.
Later, in \cref{T2.4}, we show that if $\lamstr$ is strictly
monotone at $V$, then we do not require \cref{ET2.3A}.
Also observe that if $V\in\cB_0(\Rd)$ and $\lamstr$ is not strictly monotone at $V$,
then $\lamstr(V)\le 0$.
Indeed, since $\lamstr$ is not strictly monotone at $V$ and $\lamstr(V)\ge\lamstr(-V^-)$
it is obvious that $\lamstr(V)\le 0$.
In addition, the following hold. 
If $X$ is not positive recurrent and $\lamstr(0)=0$,
then $\lamstr(V)=0$, otherwise
$\lamstr(-V^-)\le \lamstr(V)<0=\lamstr(0)$.
However, this implies that $X$ is geometrically
ergodic \cite[Theorem~2.7]{ABS}, and therefore, positive recurrent.
If $a$ is bounded and uniformly elliptic, and
$\abs{x}^{-1}\langle b(x), x\rangle \to 0$ as $\abs{x}\to \infty$,
then $\lamstr(V)=0$ for any Lipschitz $V\in\cC_0(\Rd)$,
since by \cite[Proposition~6.2]{Ichihara-15} $\lamstr(V)\ge 0$.
Therefore assuming that
$\lamstr_1=0$ in \cref{EG2.5,EG2.6} is not very restrictive.
\end{remark}

In \cite{BCN}, Berestycki, Caffarelli and Nirenberg asked the following question.
Is it true that if there exists a
bounded, sign-changing solution $\Psi$ to $\Delta \Psi+V\Psi=0$ in $\Rd$,
for some locally bounded potential V, then necessarily $\lambda^*(V)> 0$?
This question has been resolved in \cites{Barlow-98, BCN, GG-98}, and the answer
is ``yes" if and only if $d=1,2$.
Applying \cref{T2.1,T2.3} we can extend the sufficiency part of this
answer to a more general class of elliptic operators.
Noting that the Brownian motion is recurrent for $d=1,2$, and transient
for higher dimensions,
we focus on elliptic operators $\Lg$ satisfying
(A1)--(A3) which are generators of a recurrent process.
Using \cref{T2.1,T2.3} we obtain the following two corollaries.

\begin{corollary}\label{C2.1}
Suppose the solution of \cref{E2.1} is recurrent, and $V$ is a locally bounded function
which does not change sign outside some compact set in $\Rd$.
Then the existence of a bounded, sign-changing solution $\Psi\in\Sobl^{2,d}(\Rd)$
to $\Lg\Psi+V\Psi=0$ implies that $\lamstr(V)>0$.
\end{corollary}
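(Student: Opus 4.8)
The plan is to argue by contradiction: assume $\lamstr(V)\le 0$ and show that the given solution $\Psi$ must then be strictly positive, contradicting the hypothesis that it changes sign. The whole argument is a specialization of \cref{T2.3} to the case in which one of the two potentials is identically zero.

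First I would use the recurrence hypothesis to pin down the baseline operator. The twisted process attached to the eigenpair $(\bm1,0)$ of $\Lg$ is the solution $X$ of \cref{E2.1} itself, since $\log\bm1\equiv 0$ makes the extra drift term in \cref{twisted} vanish; hence, by the equivalence (a)$\Leftrightarrow$(b) of \cref{T2.1}, $\bm1$ is a ground state of $\Lg$, so $\Lg$ is critical in $\Rd$. I also need $\lamstr(0)=0$: since $\bm1\in\sC_{\Lg}(\Rd)$ we have $\lamstr(0)\le 0$; and if $\lamstr(0)<0$ there would exist $\lambda<0$ and a positive $u$ with $\Lg u=\lambda u\le 0$, so $u$ would be a positive supersolution of $\Lg v=0$, which by uniqueness (up to a constant) of the positive supersolution of a critical operator forces $u$ to be a positive constant, contradicting $\Lg u=\lambda u\ne 0$.

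Now assume, for contradiction, that $\lamstr(V)\le 0$, and apply \cref{T2.3} with $\Lg_1=\Lg_2=\Lg$, $V_1\equiv 0$, $V_2=V$, so that $P_1=\Lg$, $P_2=\Lg+V$, $\lamstr_1=\lamstr(0)=0$, $\lamstr_2=\lamstr(V)$, and $\Tilde V=\max\{0,V\}=V^+$; the structural conditions (A1)--(A3) and $V_1,V_2\in\Lpl^\infty(\Rd)$ hold by hypothesis. Condition~(1) is satisfied because $P_1-\lamstr_1=\Lg$ is critical with ground state $\Psi^*_1=\bm1$, by the previous paragraph. For \cref{ET2.3A} (see \cref{EG2.4}): $\Lg_1=\Lg_2$ trivially, and since $V$ has a fixed sign outside a compact set $K$ by assumption, either $V\le 0$ on $K^c$, in which case $\Tilde V\equiv 0$ on $K^c$ and one takes $\Tilde\Phi=\bm1$ (so $\Lg\Tilde\Phi=0$), or $V\ge 0$ on $K^c$, in which case $\Tilde V=V$ on $K^c$ and one takes $\Tilde\Phi\in\Sobl^{2,d}(\Rd)$ to be a positive principal eigenfunction of $\Lg+V$ (which exists by \cite[Theorem~1.4]{Berestycki-15}, as $\lamstr(V)$ is finite), so that $\Lg\Tilde\Phi+\Tilde V\Tilde\Phi=\lamstr(V)\Tilde\Phi\le 0=\lamstr_1\Tilde\Phi$ on $K^c$. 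For condition~(2): $\lamstr_2=\lamstr(V)\le 0=\lamstr_1$ by the contradiction hypothesis; the solution $\Psi$ satisfies $\Lg_2\Psi+V_2\Psi=\Lg\Psi+V\Psi=0=\lamstr_1\Psi$, so \cref{ET2.3B} holds; $\Psi^+\ne 0$ because $\Psi$ changes sign; and $\Psi^+\le\norm{\Psi}_\infty\,\Psi^*_1$ since $\Psi$ is bounded and $\Psi^*_1=\bm1$.

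\cref{T2.3} then yields that $P_2-\lamstr_2$ is critical in $\Rd$ and that $\Psi$ is its ground state, hence $\Psi>0$ in $\Rd$; this contradicts the hypothesis that $\Psi$ is sign-changing, so in fact $\lamstr(V)>0$. There is essentially no hard analytic step here: the substance of the proof is the identification, via \cref{T2.1}, of the recurrence of $X$ with the criticality of $\Lg$ and with $\lamstr(0)=0$, together with the bookkeeping needed to fit the present data into the hypotheses of \cref{T2.3}. The one place requiring a little care is \cref{ET2.3A}, whose verification splits according to the sign of $V$ near infinity and, in the nonnegative case, relies on the existence of a principal eigenfunction for $\Lg+V$ and on the standing assumption that $\lamstr(V)$ is finite.
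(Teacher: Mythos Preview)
Your proof is correct and follows essentially the same approach as the paper's: both argue by contradiction, identify the recurrence of $X$ with the criticality of $\Lg$ and $\lamstr(0)=0$ via \cref{T2.1}, and then feed the data $(\Lg_1,\Lg_2,V_1,V_2)=(\Lg,\Lg,0,V)$ into \cref{T2.3}, splitting the verification of \cref{ET2.3A} according to the sign of $V$ near infinity exactly as in \cref{EG2.4}. Your write-up is in fact slightly more explicit than the paper's in justifying $\lamstr(0)=0$ and in taking $\Tilde\Phi=\bm1$ directly when $V\le0$ at infinity.
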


\begin{proof}
Since $(\bm1, 0)$ is an eigenpair of $\Lg$ and the corresponding twisted
process is given by $X$,
it follows by \cref{T2.1} that $\Lg$ is a critical operator with
principal eigenvalue $0$. Moreover, $\sC_\Lg=\{c\bm1\,\colon c\in (0, \infty)\}$.
We apply \cref{T2.3}, with $\Lg_1=\Lg_2=\Lg$, $V_1=0$, $\lamstr_1=0$, $V_2=V$,
and $\Psi_1^*=c\bm1$.
Suppose $\lamstr(V)\le 0$.
If $V$ is positive outside a compact set, then \cref{ET2.3A} holds
with $\Tilde\Phi=\Psi^*_2$, the principal eigenfunction corresponding to $\lamstr(V)$.
On the other hand, if $V$ is negative outside a compact set,
then \cref{ET2.3A} holds for some positive $\Tilde\Phi$ by
\cite[Theorem~1.4]{Berestycki-15} (see also \cref{EG2.4}).
It then follows from \cref{T2.3} that $\Psi$ is a ground state,
and therefore cannot be sign-changing.
This contradicts the hypothesis that $\lamstr(V)\le 0$,
and completes the proof.
\end{proof}

\begin{corollary}\label{C2.2}
Let the process \cref{E2.1} be recurrent and $V\le 0$ be a bounded function.
Then there does not exist any nonconstant bounded solution $u$ to
\begin{equation}\label{EC2.2A}
\Lg u + V u \,=\, 0\,.
\end{equation}
\end{corollary}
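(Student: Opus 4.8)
The plan is to derive a contradiction from the assumption that a bounded nonconstant solution $u$ of \cref{EC2.2A} exists. I will use the recurrence of \cref{E2.1} together with \cref{T2.3} to show that $\Lg+V$ is forced to be critical with $\lamstr(\Lg,V)=0$ and ground state $u$ (so, in particular, $u>0$), and then invoke the stochastic representation in part~(d) of \cref{T2.1} to conclude that $u$ attains its supremum, contradicting the strong maximum principle.

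Two preliminary remarks. Since the twisted process attached to the eigenpair $(\bm1,0)$ of $\Lg$ carries no additional drift (because $\log\bm1=0$), it coincides with $X$; hence, by the implication $\text{(b)}\Rightarrow\text{(a)}$ of \cref{T2.1}, the recurrence of $X$ implies that $\Lg$ is critical with ground state $\bm1$ and $\lamstr(\Lg,0)=0$. Moreover, since $V\le0$ we have $(\Lg+V)\bm1=V\le0$, so $\bm1$ is a positive supersolution of $\Lg+V$, and therefore $\lamstr(\Lg,V)\le\lamstr(\Lg,0)=0$.

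Now suppose $u$ is a bounded nonconstant solution of \cref{EC2.2A}; replacing $u$ by $-u$ if necessary (again a bounded nonconstant solution) we may assume $u^+\ne0$. Apply \cref{T2.3} with $\Lg_1=\Lg_2=\Lg$, $V_1=0$, $V_2=V$, and $\lamstr_1=\lamstr(\Lg,0)=0$. Because $V\le0$, we have $\Tilde V=\max\{0,V\}\equiv0$, so \cref{ET2.3A} holds with $\Tilde\Phi=\bm1$, reducing to the trivialities $\Lg_1=\Lg_2$ and $\Lg\bm1\le0$; hypothesis~(1) holds by the previous paragraph with $\Psi_1^{*}=\bm1$; and hypothesis~(2) holds because $\lamstr_2=\lamstr(\Lg,V)\le0=\lamstr_1$, while $\Psi\df u$ solves $\Lg_2\Psi+V_2\Psi=0=\lamstr_1\Psi$ with $\Psi^+=u^+\ne0$ and $\Psi^+\le\norm{u}_\infty\,\Psi_1^{*}$. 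Thus \cref{T2.3} yields that $\Lg+V-\lamstr_2$ is critical in $\Rd$, that $\lamstr(\Lg,V)=\lamstr_2=0$, and that $u$ is its ground state; in particular $u>0$ in $\Rd$.

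It remains to show such a positive $u$ is constant. By the implication $\text{(a)}\Rightarrow\text{(d)}$ of \cref{T2.1} applied to $\Lg+V$ and the eigenpair $(u,0)$, we have for every $r>0$
\begin{equation*}
u(x)\,=\,\Exp_x\Bigl[e^{\int_0^{\uuptau_r}V(X_s)\,\D{s}}\,u(X_{\uuptau_r})\,\Ind_{\{\uuptau_r<\infty\}}\Bigr]\,,\qquad x\in\Bar B_r^c\,.
\end{equation*}
By recurrence $\Prob_x(\uuptau_r<\infty)=1$ for $x\in\Bar B_r^c$, and since $V\le0$ the exponential weight is at most $1$ while $X_{\uuptau_r}\in\partial B_r$; hence $u(x)\le\max_{\partial B_r}u$ for all $x\in\Bar B_r^c$. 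Since $\Lg u=-Vu\ge0$ (as $u>0$ and $V\le0$), the weak maximum principle in $B_r$ gives $\max_{\Bar B_r}u=\max_{\partial B_r}u$, and combining the two we get $\sup_{\Rd}u=\max_{\partial B_r}u$, which is attained; the strong maximum principle then forces $u$ to be constant, a contradiction. The genuinely nontrivial inputs here are \cref{T2.1,T2.3} themselves; within the reduction, the only point requiring a little care is the verification that \cref{T2.3} applies even when $V$ vanishes outside a compact set, which is why I pass through the observation $\Tilde V\equiv0$ (and the identity $\lamstr(\Lg,0)=0$) rather than appealing to \cref{C2.1}.
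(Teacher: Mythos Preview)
Your proof is correct. The first half---verifying the hypotheses of \cref{T2.3} with $V_1=0$, $V_2=V$, $\Tilde V\equiv0$, $\Tilde\Phi=\bm1$ to conclude that $u$ is a positive ground state for $\Lg+V$ with $\lamstr(\Lg,V)=0$---is exactly what the paper does, only the paper routes this through \cref{C2.1} rather than invoking \cref{T2.3} directly.

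The second half differs. Once $u>0$ is known, the paper observes that $C-u$ (for $C>\norm{u}_\infty$) is a \emph{positive supersolution of $\Lg$}, since $\Lg(C-u)=Vu\le0$; criticality of $\Lg$ then forces $C-u$ to be a constant multiple of $\bm1$, so $u$ is constant. Your route instead stays with the operator $\Lg+V$: you use the stochastic representation from \cref{T2.1}(d) and the sign of $V$ to get $u(x)\le\max_{\partial B_r}u$ outside $B_r$, pair it with the weak maximum principle for $\Lg u=-Vu\ge0$ inside $B_r$, and finish with the strong maximum principle. Both arguments are valid; the paper's is shorter because it exploits the criticality of $\Lg$ (already in hand) rather than that of $\Lg+V$, while yours illustrates more explicitly how the stochastic representation encodes the maximum principle.
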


\begin{proof}
Since $\lamstr(V)\le 0$, \cref{C2.1} implies that
any bounded solution $u$ to \cref{EC2.2A} cannot be sign-changing.
So without loss of generality we assume that $0\le u< C$. Then
$C-u$ is a positive supersolution of $\Lg u=0$.
Since $\Lg$ is critical by hypothesis, 
it has a unique supersolution (up to a multiplicative constant).
Hence $u$ must be constant.
\end{proof}

The conclusion of \cref{C2.2} might not hold if $V\not\le0$. 
For instance, in dimension $d=2$ we know that the standard Wiener process is recurrent.
But $u(x, y)=\sin(x)\,\sin(y)$ satisfies $\Delta u + 2u=0$.
\cref{C2.2} is also comparable to \cite[Theorem~1.7]{Pinchover-07}.
Note that for $V=0$ the operator in \cref{EC2.2A}
is critical in the sense of Pinchover (see \cref{T2.1} above).
Therefore \cref{C2.2} provides a
Liouville property for the perturbed operator.

As shown in \cref{T2.1},
criticality is equivalent to the strict right monotonicity
of the principal eigenvalue $\lamstr$. However, if
we assume strict monotonicity of $\lamstr(\Lg_1, V_1)$ at $V_1$,
then \cref{T2.3} holds for a bigger class of potentials without assuming \cref{ET2.3A}. 
This is the subject of our next result. Also note that the the theorem
which follows
provides sufficient conditions for strict monotonicity of the principal
eigenvalue of the perturbed problem.

\begin{theorem}\label{T2.4}
Let $\Lg_1$ be a small perturbation of $\Lg_2$,
$V_i\in\Lpl^\infty(\Rd)$, $i=1,2$, and $V_1-V_2\in\cB_0(\Rd)$.
Let $\lamstr_i$ denote the principal eigenvalue of $\Lg_i+V_i$, $i=1,2$,
and suppose that $\lamstr_1$ is strictly monotone at $V_1$.
Suppose also that $\lamstr_2\le\lamstr_1$,
and that there exists $\Psi\in\Sobl^{2, d}(\Rd)$, which may be sign-changing
but $\Psi^+\ne 0$, that satisfies
\begin{equation}\label{ET2.4A}
\Lg_2\Psi+V_2\Psi \,\ge\, \lamstr_1\Psi\,,
\end{equation}
and  
\begin{equation}\label{ET2.4B}
\Psi^+(x)\le C\, \Psi^*_1(x)\quad \text{for all\ } x\in\Rd\,,
\end{equation}
for some constant $C>0$.
Then $\lamstr_2$ is strictly monotone at $V_2$, and $\Psi=\Psi^*_2$
\textup{(}up to a multiplicative constant\/\textup{)}, where $\Psi^*_2$ is the principal
eigenfunction of $\Lg_2+V_2$.
\end{theorem}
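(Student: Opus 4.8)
The plan is to verify the hypothesis \cref{ET2.3A} of \cref{T2.3}, apply that theorem to obtain criticality of $P_2-\lamstr_2$, the identity $\lamstr_1=\lamstr_2$, and that $\Psi$ is the ground state, and then to upgrade this to the strict monotonicity of $\lamstr_2$ by transferring a Lyapunov drift inequality from $P_1$ to $P_2$. I would begin by reading off the consequences of the hypothesis on $\lamstr_1$. Conjugating $\Lg_1+V_1-\lamstr_1$ by its ground state gives $\Tilde\Lg_1 f=(\Psi^*_1)^{-1}(\Lg_1+V_1-\lamstr_1)(\Psi^*_1 f)$, where $\Tilde\Lg_1$ is the generator of the twisted process $Y$ associated with $(\Psi^*_1,\lamstr_1)$; hence $\lamstr(\Tilde\Lg_1,h)=\lamstr_1(V_1+h)-\lamstr_1$ for every $h$, so the assumption that $\lamstr_1$ is strictly monotone at $V_1$ is equivalent to $\lamstr(\Tilde\Lg_1,\cdot)$ being strictly monotone at $0$. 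By \cref{T2.1}, $P_1-\lamstr_1$ is critical with ground state $\Psi^*_1$, and $Y$ is recurrent, hence conservative; moreover, as in the discussion in \cref{R2.5}, strict left monotonicity at $0$ forces $Y$ to be geometrically ergodic by \cite[Theorem~2.7]{ABS}. Consequently there exist a compact $K_0$, a constant $\delta>0$, and a function $\Lyap\in\Sobl^{2,d}(\Rd)$ with $\Lyap\ge1$, $\Lyap(x)\to\infty$ as $\abs{x}\to\infty$, and $\Tilde\Lg_1\Lyap\le-\delta\Lyap$ on $K_0^c$; equivalently, writing $W\df\Psi^*_1\Lyap\ge\Psi^*_1$, one has $(\Lg_1+V_1-\lamstr_1)W\le-\delta W$ on $K_0^c$. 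We enlarge $K_0$ so that also $\Lg_1=\Lg_2$ on $K_0^c$.

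To verify \cref{ET2.3A}, write $\Tilde V\df\max\{V_1,V_2\}=V_1+g^-$ with $g^-\df(V_1-V_2)^-$. Since $V_1-V_2\in\cB_0(\Rd)$ and $g^-$ is locally bounded, one may choose $h\in\cC^+_0(\Rd)$ with $h\ge g^-$ and $\sup_{B^c_r}h\to0$ as $r\to\infty$. Fix $r$ so large that $B_r\supset K_0$ and $\sup_{B^c_r}h<\delta$, and put $\zeta(x)\df\Exp_x\bigl[e^{\int_0^{\uuptau_r}h(Y_s)\,\D s}\bigr]$ for $x\in B^c_r$. Because $(\Tilde\Lg_1+h)\Lyap\le(h-\delta)\Lyap<0$ on $B^c_r$, the process $e^{\int_0^{t\wedge\uuptau_r}h(Y_s)\,\D s}\,\Lyap(Y_{t\wedge\uuptau_r})$ is a nonnegative supermartingale, and letting $t\to\infty$ — using recurrence of $Y$, so that $\uuptau_r<\infty$ almost surely, together with $\Lyap\ge1$ — gives $\zeta(x)\le\Lyap(x)<\infty$. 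A standard Feynman--Kac argument then identifies $\zeta$ as a positive solution of $(\Tilde\Lg_1+h)\zeta=0$ in $B^c_r$, continuous up to $\partial B_r$ with boundary value $1$; in particular $(\Tilde\Lg_1+g^-)\zeta=-(h-g^-)\zeta\le0$ in $B^c_r$. Undoing the conjugation, $\Tilde\Phi\df\Psi^*_1\zeta>0$ satisfies $\Lg_2\Tilde\Phi+\Tilde V\Tilde\Phi\le\lamstr_1\Tilde\Phi$ in $B^c_r$, so \cref{ET2.3A} holds with $K=\Bar B_r$ (when $g^-\equiv0$ one simply takes $\Tilde\Phi=\Psi^*_1$). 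All remaining hypotheses of \cref{T2.3} — criticality of $P_1-\lamstr_1$, $\lamstr_2\le\lamstr_1$, the subsolution $\Psi$ with $\Psi^+\ne0$ satisfying \cref{ET2.4A,ET2.4B} — are exactly those assumed here, so \cref{T2.3} yields that $P_2-\lamstr_2$ is critical, $\lamstr_1=\lamstr_2$, and $\Psi$ is its ground state; in particular $\Psi>0$ and, after normalization, $\Psi=\Psi^*_2$.

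It remains to show that $\lamstr_2$ is strictly monotone at $V_2$, which by conjugation and \cref{T2.1} is the same as geometric ergodicity of the twisted process $Y^{(2)}$ associated with $(\Psi^*_2,\lamstr_2)$. After enlarging $K_0$ so that $\abs{V_1-V_2}<\delta/2$ on $K_0^c$, and using $\Lg_1=\Lg_2$ and $\lamstr_1=\lamstr_2$ there,
\begin{equation*}
(\Lg_2+V_2-\lamstr_2)W=(\Lg_1+V_1-\lamstr_1)W-(V_1-V_2)W\le-\bigl(\delta+V_1-V_2\bigr)W\le-\tfrac{\delta}{2}\,W\quad\text{on }K_0^c\,.
\end{equation*}
Conjugating by $\Psi^*_2=\Psi$, the function $\Lyap_2\df W/\Psi^*_2=\Lyap\,\Psi^*_1/\Psi^*_2$ satisfies $\Tilde\Lg_2\Lyap_2\le-\tfrac{\delta}{2}\Lyap_2$ on $K_0^c$; moreover $\Psi^*_2=\Psi^+\le C\Psi^*_1$ by \cref{ET2.4B}, so $\Lyap_2\ge\Lyap/C$, whence $\Lyap_2$ is bounded below by a positive constant and tends to $\infty$ at infinity. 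Thus a constant multiple of $\Lyap_2$ is a Lyapunov function witnessing geometric ergodicity of $Y^{(2)}$, and \cite[Theorem~2.7]{ABS} together with \cref{T2.1} (and the remark after the definition of strict monotonicity, which promotes one-sided to two-sided strictness) give that $\lamstr_2$ is strictly monotone at $V_2$.

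The crux is the middle step: one must ensure that the Feynman--Kac functional defining $\zeta$ is finite — this is precisely where the strict left monotonicity of $\lamstr_1$ enters, through the gain $\sup_{B^c_r}h<\delta$ furnished by the Lyapunov inequality — and that $\zeta$ is a bona fide positive supersolution with the correct behaviour on $\partial B_r$; one also has to keep careful track of the Doob transforms relating $\Lg_i+V_i-\lamstr_i$ and $\Tilde\Lg_i$, including that $W$ and $\Lyap_2$ lie in the right Sobolev class. The transfer of the drift inequality in the last step is comparatively routine, the only delicate point being that $\Lyap_2$ remains coercive, which is guaranteed by the bound $\Psi^+\le C\Psi^*_1$.
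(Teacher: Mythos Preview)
Your proof is correct and reaches the conclusion, but the route differs from the paper's in an interesting way. The paper does not pass through \cref{T2.3}: instead it first invokes \cref{L3.2} to get $\lamstr_1=\lamstr_2$, and then works directly with the eigenfunction $\Psi_\beta$ of $\Lg_1+V_1+\beta h$ for a compactly supported $h\in\cC_0^+$ and a small $\beta<0$ (so $\Lambda_\beta<\lamstr_1$), using the stochastic representation \cref{ET2.1A} for $\Psi_\beta$ and the gap $\tfrac12(\lamstr_1-\Lambda_\beta)$ to kill the boundary term at $\uptau_R$ and to obtain the exponential estimate that feeds into \cite[Theorem~2.2]{ABS} for strict monotonicity of $\lamstr_2$. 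Your $W=\Psi^*_1\Lyap$ is exactly this $\Psi_\beta$ seen through the Doob transform, and your $\delta$ plays the role of $\lamstr_1-\Lambda_\beta$; what you do differently is to package the middle step as a verification of \cref{ET2.3A} via the Feynman--Kac functional $\zeta$, which is cleaner and more modular, at the cost of having to manufacture an actual supersolution $\Tilde\Phi$ on an exterior domain. The paper's direct argument avoids constructing $\Tilde\Phi$ but in effect repeats the key estimate of \cref{L3.3} by hand.

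Two small points worth tightening. First, the assertion $\Lyap(x)\to\infty$ does not follow from geometric ergodicity alone without extra work, and you do not actually need it: in the last step you only use that $\Lyap_2$ is bounded below by a positive constant together with the drift inequality $\Tilde\Lg_2\Lyap_2\le-\tfrac{\delta}{2}\Lyap_2$ on $K_0^c$, and after adding a bounded correction on $K_0$ this already yields the Foster--Lyapunov condition feeding into \cite[Theorem~2.7]{ABS}. A concrete way to produce $\Lyap\in\Sobl^{2,d}_{\mathrm{loc}}$ with the stated lower bound is precisely the paper's $\Psi_\beta/\Psi^*_1$, where the lower bound comes from the minimal-growth property of $\Psi^*_1$. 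Second, \cref{T2.3} is stated with $\Tilde\Phi\in\Sobl^{2,d}(\Rd)$, whereas your $\Tilde\Phi=\Psi^*_1\zeta$ is defined only on $B_r^c$; this is harmless because the hypothesis is only used outside $K$ (see \cref{L3.3}), but to match the statement literally you should extend $\Tilde\Phi$ positively across $\Bar B_r$.
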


\begin{remark}\label{Spectral_gap}
Strict monotonicity sometimes implies an interesting spectral property.
To explain this, we restrict ourselves to symmetric operators.
In particular, we consider a second-order elliptic operator in $\cD$ in
divergence form given by
\begin{equation*}
\Lg u\,=\,\mbox{div} \bigl(A\,\nabla u\bigr)\,,
\end{equation*}
where $A\colon\Rd\to\mathcal{S}_+^{d\times d}$ is locally non-degenerate.
The assumptions on the coefficients are the same as before.
Let $\D\nu = \rho(x) \D{x}$, where $\rho(x)$ is a positive measurable
function on $\cD$.
The operator $\Lg$ is self-adjoint in the space $\Lp^2(\cD,\D\nu)$
(in the sense of the Friedrichs extension).

Let $V \in \Lp^{\infty}(\cD)$,
and $\sigma(\Lg_V)$ denote  the $\Lp^2(\cD,\D\nu)$-spectrum of the
Friedrichs extension of $\Lg_V$, which is also denoted as $\Lg_V$, abusing
the notation in the interest of simplicity.
We next show that if $\lamstr(V)$ is
strictly monotone at $V$,
then it must be an isolated eigenvalue in $\sigma(\Lg_V)$.
Indeed, by Persson's formula (see \cite{PER-60} or \cite[Proposition~4.2]{BMY})
the supremum of the essential spectrum $\sigma_{\mathsf{ess}}(\Lg_V)$ is given by 
\begin{equation*}
\lambda_{\infty}(V) \;\df\; \inf \;\{ \lambda\, \colon \exists \;
K \Subset \cD\,,\; \sC_{\Lg_{V} - \lambda}(\cD \setminus K ) \ne\varnothing \}\,.
\end{equation*}
In addition, $\lambda^{*}(V)$ is the supremum of $\sigma(\Lg_V)$.
It is clear that $\lambda_{\infty}(V) \le \lambda^{*}(V)$.
We claim that $\lambda_{\infty}(V) < \lamstr(V)$.
Arguing by contradiction, let us assume $\lambda_{\infty}(V) = \lambda^{*}(V)$. 
It is known that 
\begin{equation}\label{ess_sup}
\sigma_{\mathsf{ess}}(\Lg_V) \; =\; \sigma_{\mathsf{ess}}(\Lg_V - h)
\end{equation}
for any $h \in \cC_0(\cD)$.
Using \cref{ess_sup}, we have $\lambda_{\infty}(V) = \lambda_{\infty}(V - h)$
for all $h \in \cC^+_0(\cD)$.
By hypothesis, $\lambda^*(V)$ is strictly monotone at $V$ and therefore, we have 
\begin{equation*}
\lambda^{*}(V - h) \;<\; \lambda^{*}(V) \;=\; \lambda_{\infty}(V)
\;=\; \lambda_{\infty}(V - h)\;\le\; \lamstr(V-h)\,.
\end{equation*}
Thus we arrive at a contradiction, which implies that
$\lambda_{\infty}(V) < \lambda^{*}(V)$
(for a more general related result see \cite[Theorem~2.5]{ABS}).
Since the Friedrichs extension is a self-adjoint operator,
its spectrum can be written as
$\sigma(\Lg_{V}) = \sigma_{\mathsf{ess}}(\Lg_V) \cup \sigma_{\mathsf{dis}}(\Lg_V)$,
with $\sigma_{\mathsf{ess}}(\Lg_V) \cap \sigma_{\mathsf{dis}}(\Lg_V) = \varnothing$,
where $\sigma_{\mathsf{dis}}(\Lg_V)$ is the discrete spectrum.
On the other hand, \cite[Theorem~1.1]{Beckus-17}
shows that $\lambda^{*}(V)\in\sigma(\Lg_V)$.
Therefore, $\lambda^{*}(V)$ is an isolated eigenvalue in $\sigma(\Lg_{V})$.
\end{remark}

\begin{remark}
Let $P_i = \Lg+ V_{i}$, $i = 1, 2$, be self-adjoint operators,
and $\lambda_{\infty, i}$ 
denote the supremum of the essential spectrum of $P_i$.
If $V_1-V_2\in\cC_0(\Rd)$, then it is known that
$\sigma_{\mathsf{ess}}(P_1) = \sigma_{\mathsf{ess}}(P_2)$, which in turn implies that
$\lambda_{\infty, 1} = \lambda_{\infty, 2}$.
Suppose that the hypotheses of \cref{T2.4} hold. Then using \cref{T2.4}
and \cref{Spectral_gap} we 
deduce that $\lamstr_2 > \lamstr_{\infty, 2}$, and that the corresponding operator
$P_2-\lamstr_2$ is critical. In particular, \cref{T2.4}
provides a necessary condition
for the spectral gap of the operator $P_2$.
\end{remark}

\begin{example}
Let $\cD = \mathbb{R}^d \setminus \{ 0 \}$, where $d \ge 3$,
and consider the Hardy operator 
\begin{equation*}
\Lg_{V} \df \Delta + \frac{(d-2)^2}{4} \frac{1}{|x|^2}\,.
\end{equation*}
Then it is well known (see \cite{BMY}) that for this operator we have
$\lambda^{*}(V) = \lambda_{\infty}(V)$.
Hence $\lambda^{*}(V)$ cannot be strictly monotone at $V$, although
it is strictly right monotone. 
\end{example}

There is a large class of operators for which the strict
monotonicity property holds. 
The following example suggests that the
assumptions in \cref{T2.2,T2.4} hold for a
large class of operators.

\begin{example}
Suppose that the solution of \cref{E2.1} is recurrent. Consider two functions
$\Tilde{V}_i\in\cC^+_0(\Rd)$, $i=1, 2$, which are compactly supported.
Then as shown in \cite[Theorem~2.7]{ABS}, the map
$\beta\mapsto \Lambda^i_\beta\df \lamstr(\beta \Tilde{V}_i)$
is strictly monotone in $[0, \infty)$, and
$\Lambda^i_0=0$, for $i=1,2$.
Since $\beta\mapsto\Lambda^i_\beta$ is an increasing,
convex function \cite{Berestycki-15}, we have
$\lim_{\beta\to\infty}\Lambda^i_\beta=\infty$.
Therefore, for any $\beta_1>0$, we can find $\beta_2>0$ such that
$\Lambda^1_{\beta_1}=\Lambda^2_{\beta_2}$. Thus by defining 
$V_i\df\beta_i\Tilde{V}_i$, $i=1,2$, we note that
$\lamstr_1=\lamstr(V_1)=\lamstr(V_2)=\lamstr_2$, and $V_i$ has compact support.
On the other hand, $\Lg+V_i-\lamstr_i$ is critical by \cref{T2.1}.
In fact, the corresponding
twisted processes are geometrically ergodic by \cite[Theorem~2.7]{ABS}.
Thus, if $\Psi^*_i$, $i=1,2$, are the principal eigenfunctions, then they have a
stochastic representation by \cref{T2.1}.
Hence, if we choose $r$ large enough
such that $\text{support}(V_i)\subset B_r$, we have
\begin{align*}
\Psi^*_i(x)\,=\, \Exp_x\Bigl[e^{-\lamstr_i\uuptau_r} \Psi^*_i(X_{\uuptau_r})
\Ind_{\{\uuptau_r<\infty\}}\Bigr],\quad x\in B^c_r\,.
\end{align*}
Since $\lamstr_1=\lamstr_2$, it is easy to see from the above that
$\Psi^*_2\le C\, \Psi^*_1$ for some $C>0$.
\end{example}

\section{Proofs of \texorpdfstring{\cref{T2.1,T2.2,T2.3,T2.4}}{}}\label{Sec-3}

Before we proceed with the proofs of the results in \cref{Sec-2},
let us recall the It\^o--Krylov formula \cite[p.~122]{Krylov}
for generalized derivatives.
Let $\cD$ be a bounded domain in $\Rd$ with smooth
boundary and $V\in \Lpl^\infty(\Rd)$.
Let $\uptau=\uptau(\cD)$.
Then for any $\varphi\in\Sobl^{2,d}(\Rd)$, we have
\begin{equation}\label{ito-krylov}
\Exp_x\Bigl[e^{\int_0^{T\wedge\uptau} V(X_s)\, \D{s}}\,
\varphi(X_{T\wedge \uptau})\Bigr]-\varphi(x)
\,=\, \Exp_x\biggl[\int_0^{T\wedge\uptau} e^{\int_0^t V(X_s)\, \D{s}}\,
\Lg_V \varphi (X_t)\, \D{t}\biggr]
\end{equation}
for all $x\in\cD$
and $T>0$.
We start with the proof of \cref{T2.1}.

\begin{proof}[Proof of \cref{T2.1}]
The equivalence between (b), (c) and (d) is established in \cite{ABS}.
Since the twisted process corresponding to an eigenpair $(\Psi,\lambda)$ with
$\lambda>\lamstr(V)$ is transient by \cite[Theorem~2.1\,(c)]{ABS},
part (b) together with \cite[Corollary~2.1]{ABS} imply that $\lambda=\lamstr(V)$.
 
Let us show that (b) $\Rightarrow$ (a).
Suppose that $v\in\Sobl^{2,d}(\Rd)$
is a positive function which satisfies $\Lg v + (V-\lambda)v\le 0$ a.e. in $B_{r_1}^c$,
with $r_1>0$.
Recall that $\uptau_R$ denotes the first exit time from the ball $B_R$.
Then by the It\^o--Krylov formula in \cref{ito-krylov} we have 
\begin{align*}
v(x) &\,\ge\, \Exp_x\Bigl[e^{\int_0^{\uuptau_r\wedge\uptau_R\wedge T}
(V(X_s)-\lambda)\, \D{s}}\, v (X_{\uuptau_r\wedge\uptau_R\wedge T})\Bigr]\\[5pt]
&\,\ge\, \Exp_x\Bigl[e^{\int_0^{\uuptau_r}(V(X_s)-\lambda)\, \D{s}}\,
v(X_{\uuptau_r})\,\Ind_{\{\uuptau_r<\uptau_R\wedge T\}}\Bigr]\,,
\quad x\in B^c_r\cap B_R\,,\ r>r_1\,.
\end{align*}
Now letting first $T\to\infty$, and then $R\to\infty$, and using
Fatou's lemma we obtain
\begin{equation*}
v(x) \,\ge\, \Exp_x\Bigl[e^{\int_0^{\uuptau_r}(V(X_s)-\lambda)\, \D{s}}\,
v (X_{\uuptau_r}) \Ind_{\{\uuptau_r<\infty\}}\Bigr], \quad x\in B^c_r\,,
\ r>r_1\,.
\end{equation*}
Hence (a) follows by applying \cref{ET2.1A}.

Next we show that (a) $\Rightarrow$ (b).
By \cref{C3.2}, which appears later in this section, there exists a ball $\sB$,
a constant $\delta\ge0$, and a positive solution
$\Psi^*\in\Sobl^{2,d}(\Rd)$
to $\Lg\Psi^* + (V+\delta\Ind_\sB-\lambda)\Psi^*= 0$,
such that $\lambda=\lamstr(V+\delta\Ind_\sB)$,
and
\begin{equation}\label{PT2.1A}
\Psi^*(x) \,=\, \Exp_x\Bigl[e^{\int_0^{\uuptau_r}(V(X_s)-\lambda)\, \D{s}}\,
\Psi^*(X_{\uuptau_r}) \Ind_{\{\uuptau_r<\infty\}}\Bigr]
\end{equation}
for $B_r\supset\sB$.
Let $\kappa_0\df\inf_{\Rd}\frac{\Psi^*}{\Psi}$.
By the assumption of minimal growth, we have $\kappa_0>0$.
Note that the value $\kappa_0$ is attained. If not, then the 
function $\Phi=\Psi^*-\kappa_0\Psi$ is positive and satisfies
$\Lg\Phi+ (V-\lamstr)\Phi\le 0$.
Consequently, the minimal growth of $\Psi$ would imply that
$\inf_{\Rd}\left(\frac{\Psi^*}{\Psi}-\kappa_0\right)>0$,
thus contradicting the definition of $\kappa_0$.
Therefore, defining $\Phi=\Psi^*-\kappa_0\Psi$, it is easy to see that
$\Lg\Phi - (V+\delta\Ind_{\sB}-\lamstr)^-\Phi\le 0$,
and $\Phi$ attains its minimum value $0$ in $\Rd$,
which implies by the strong maximum principle that $\kappa_0\Psi^*=\Psi$.
This of course implies that $\delta=0$.
Hence $\lambda=\lamstr(V)$, and, in turn, \cref{PT2.1A} implies that
the twisted process corresponding to the eigenpair $(\Psi,\lambda)$ is recurrent.
This completes the proof.
\end{proof}

We continue with the proof of \cref{T2.2}.

\begin{proof}[Proof of \cref{T2.2}]
Let $K\Subset\cD$ be a compact set such that $V_2-V_1\ge 0$ and
$\Lg_1=\Lg_2$ in $K^{c}$.
Since $\lambda_2^* \le \lamstr_1$, using Harnack's inequality, 
it follows that there exists a positive generalized eigenfunction $\Psi^{*}_2$
corresponding to the generalized eigenvalue $\lamstr_2$, i.e.,
\begin{equation*}
P_2\Psi^*_2 \,=\, \lamstr_2\,\Psi^*_2 \quad \text{in\ } \cD\,.
\end{equation*}
Thus we have
\begin{equation}\label{PT2.2A}
\Lg_1\Psi^{*}_2 + (V_1-\lamstr_1)\Psi^{*}_2 \,\le\,
\Lg_2\Psi^{*}_2 + (V_2-\lamstr_2)\Psi^{*}_2\; =\;0\quad\text{in\ } K^c\,.
\end{equation}
By the minimal growth property of $\Psi^{*}_1$ and \cref{PT2.2A},
we can find a positive constant $\kappa$ and a set $\cD_i\in\mathfrak{D}$,
with $\cD_i\supset K$, such that $\kappa\Psi^{*}_1\le \Psi^{*}_2$ for
all $x \in \cD\setminus\cD_i$.
Let 
\begin{equation*}
\Hat{\kappa}\,=\,\sup_{\cD}\, \frac{\Psi}{\Psi^*_2}
\,=\, \sup_{\cD}\, \frac{\Psi^+}{\Psi^*_2}.
\end{equation*}
Then, using \cref{ET2.2B} and the bound $\kappa\Psi^{*}_1\le \Psi^{*}_2$,
we conclude that $\Hat{\kappa} \in (0, \infty)$.
Let us now define 
\begin{equation*}
\Phi(x) \,\df\, \Hat\kappa \Psi^{*}_2(x) - \Psi(x) \quad \ \mbox{in} \; \cD.
\end{equation*}
We claim that there exists $x_0 \in \cD$ such that $\Phi(x_0) = 0$.
If not, then $\Phi(x) > 0$ in $\cD$.
Then in $K^{c}$ we have 
\begin{align*}
\Lg_1\Phi + (V_1-\lamstr_1)\Phi \;&=\; \Lg_2\Phi + (V_1-\lamstr_1)\Phi 
\\[5pt]
& \,\le\, \Lg_2 \Phi + (V_2-\lamstr_1) \Phi 
\\[5pt]
&\,=\, \bigl( \Lg_2 + V_2-\lamstr_1 \bigr)(\Hat\kappa \Psi^{\star}_{2} - \Psi)
\\[5pt]
&\,\le\, \bigl(\Lg_2+V_2 -\lamstr_1\bigr) \Hat\kappa\Psi^*_2 \,=\,
(\lamstr_2-\lamstr_1)\Hat\kappa\, \Psi^*_2 \,\le\, 0\,.
\end{align*}
By the minimality of the growth of
the ground state $\Psi^{*}_1$, there exist a positive constant
$\kappa_1$ and a compact set $K_2\supset K$ such that 
$\kappa_1 \Psi^{*}_{1} \le \Phi$ in $K^{c}_{2}$.
Next, using \cref{ET2.2B}, we obtain
\begin{equation*}
\Hat{\kappa}\Psi^*_2(x) -\Psi(x) \,\ge\, \frac{\kappa_1}{C} \Psi(x)
\; \Rightarrow\; \frac{\Psi(x)}{\Psi^*_2(x)}\,\le\, \frac{\Hat\kappa}
{1+\nicefrac{\kappa_1}{C}}\;<\;\Hat\kappa\qquad\forall\,x\in K^{c}_{2}\,.
\end{equation*}
Thus the value $\Hat\kappa$ is attained for some $x_0 \in K_2$.
This shows that $\Phi(x_0)=0$ at some $x_0\in \cD$.

On the other hand, $\Phi$ is nonnegative, and it satisfies
\begin{equation*}
\Lg_2 \Phi + (V_2-\lamstr_1) \Phi
\,\le\, (\lamstr_2-\lamstr_1) \Hat\kappa\Psi^{*}_2 \,\le\, 0\quad \text{in\ } \cD\,,
\end{equation*}
which in turn implies that
\begin{equation*}
\Lg_2 \Phi - (V_2-\lamstr_1)^-\Phi
\,\le\, -(V_2-\lamstr_1)^+\Phi \,\le\, 0 \quad \text{in\ } \cD\,.
\end{equation*}
Thus by strong maximum principle we must have $\Phi \equiv 0$ in $\cD$.
This shows that $\Hat{\kappa}\Psi^{*}_2=\Psi$, which implies by \cref{ET2.2A}
that $\lamstr_2 = \lamstr_1$. 

To complete the proof it remains to show that $\Psi^{*}_2$
is a ground state of $\Lg_2 + V_2-\lamstr_2$. 
Consider a compact set $\Tilde K$, and let $v\in\Sobl^{2, d}(\Tilde K^{c})$
be a positive supersolution of
$\Lg_2 + V_2-\lamstr_2$, i.e.,
\begin{equation*}
\Lg_2 v + (V_2-\lamstr_2) v\,\le\, 0 \quad \text{in\ } \Tilde K^{c}\,.
\end{equation*}
By hypothesis, we have
\begin{equation*}
\Lg_1 v + (V_1-\lamstr_1) v\,\le\, \Lg_2 v + (V_2-\lamstr_2) v\,\le\, 0
\qquad\text{on\ }K^c \cap \Tilde K^c\,.
\end{equation*}
Since $\Psi^{*}_1$ has minimal growth at infinity, 
we can find a constant $\kappa_2$ and a compact set $\Tilde K_{2}$ satisfying
$\kappa_2 \Psi^{*}_1\le v$ in $\Tilde K_2^{c}$.
Combining this with \cref{ET2.2B} we have
$\frac{\kappa_2}{C}\, \Psi^{*}_2\le v$ in $\Tilde K^{c}_{2}$.
Therefore $\Psi^{*}_2$ also has minimal growth at infinity, and hence is a
ground state.
This completes the proof. 
\end{proof}


As an immediate corollary to \cref{T2.2}, we have the following generalization of the
result in \cite[Corollary~1.8]{Pinchover-07}

\begin{corollary}
Let $P_1$ and $P_2$ be as in \cref{T2.2}.
Suppose that any $\Psi$ which satisfies \cref{ET2.2A,ET2.2B}
cannot be a solution of $(\Lg_2+V_2-\lamstr_1)\Psi=0$ unless
it is sign-changing.
Then $\lamstr_2>\lamstr_1$.
\end{corollary}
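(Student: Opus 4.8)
The plan is to obtain this corollary as an immediate consequence of \cref{T2.2}, via a contradiction argument. Suppose, contrary to the asserted conclusion, that $\lamstr_2\le\lamstr_1$. Then, with $P_1$, $P_2$, and the subsolution $\Psi$ as in the setting of \cref{T2.2}, every hypothesis of that theorem is in force: the structural conditions (A3) together with $b_k,V_k\in\Lpl^\infty(\cD)$, the sign condition $V_2\ge V_1$ outside a compact set, the small-perturbation condition on $\Lg_1$, the criticality of $P_1-\lamstr_1$ with ground state $\Psi_1^*$, the inequality $\lamstr_2\le\lamstr_1$, and the existence of $\Psi$ with $\Psi^+\ne0$ obeying \cref{ET2.2A,ET2.2B}.

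Next I would apply \cref{T2.2} itself, which yields that $P_2-\lamstr_2$ is critical, that $\lamstr_1=\lamstr_2$, and that $\Psi$ is the ground state of $P_2-\lamstr_2$. The key point is the conjunction of two facts now available: first, by \cref{MGI} a ground state is by definition a \emph{positive} solution, so $\Psi>0$ and in particular $\Psi$ is not sign-changing; second, since $\Psi$ is the ground state of $P_2-\lamstr_2$ and $\lamstr_2=\lamstr_1$, it satisfies $\Lg_2\Psi+V_2\Psi=\lamstr_2\Psi=\lamstr_1\Psi$, so $\Psi$ is a solution of $(\Lg_2+V_2-\lamstr_1)\Psi=0$ which moreover satisfies \cref{ET2.2A} trivially (being a solution) and \cref{ET2.2B} by assumption. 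Thus $\Psi$ is a non-sign-changing solution of $(\Lg_2+V_2-\lamstr_1)\Psi=0$ that fulfills \cref{ET2.2A,ET2.2B}, directly contradicting the standing hypothesis of the corollary. Hence $\lamstr_2\le\lamstr_1$ is impossible, and $\lamstr_2>\lamstr_1$.

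Since this is essentially a one-line corollary of \cref{T2.2}, there is no real obstacle to overcome; the only place that calls for attention is the identification, through \cref{MGI}, of the phrase ``$\Psi$ is a ground state of $P_2-\lamstr_2$'' with ``$\Psi$ is an everywhere positive solution'' — this positivity is precisely what clashes with the hypothesized sign-changing behavior and closes the argument. (Equivalently, one may read the proof without contradiction: \cref{T2.2} shows that whenever $\lamstr_2\le\lamstr_1$ the subsolution $\Psi$ is forced to coincide, up to a positive multiple, with the positive ground state; so the hypothesis of the corollary can hold only if this alternative is ruled out, i.e.\ only if $\lamstr_2>\lamstr_1$.)
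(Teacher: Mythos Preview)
Your proof is correct and matches the paper's approach: the paper states this as ``an immediate corollary to \cref{T2.2}'' without further argument, and your contradiction/contrapositive reading of \cref{T2.2} is exactly the intended one-line justification.
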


To prove \cref{T2.3,T2.4} we need several lemmas which are stated next.

\begin{lemma}\label{L3.1}
Suppose that $\lamstr(V)$
is not strictly right monotone at $V$.
Then for any ball $\sB$ there exists a constant $\delta>0$ such
that $\lamstr(V) = \lamstr(V+\delta\Ind_\sB)$, and
$\lamstr$ is strictly right monotone at $V+\delta\Ind_\sB$.
\end{lemma}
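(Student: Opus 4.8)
The plan is to study the one–parameter family $f(\delta)\df\lamstr(V+\delta\Ind_\sB)$, $\delta\ge0$, and to take as the constant in the statement the threshold
\begin{equation*}
\delta^*\,\df\,\sup\,\bigl\{\delta\ge0\colon f(\delta)=\lamstr(V)\bigr\}\,.
\end{equation*}
First I would record that $f$ is non-decreasing, that it is convex (since $V\mapsto\lamstr(V)$ is convex, see \cite{Berestycki-15}, and $\delta\mapsto V+\delta\Ind_\sB$ is affine), and that $\lamstr(V)=f(0)\le f(\delta)\le\lamstr(V+\delta)=\lamstr(V)+\delta$; hence $f$ is finite on $[0,\infty)$ and therefore continuous on $(0,\infty)$. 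The lemma then follows once we establish (i) $\delta^*<\infty$, (ii) $\delta^*>0$, and (iii) that $\Lg_{V+\delta^*\Ind_\sB}-\lamstr(V+\delta^*\Ind_\sB)$ is critical: by \cref{T2.1} the last property is precisely strict right monotonicity of $\lamstr$ at $V+\delta^*\Ind_\sB$, while $f(\delta^*)=\lamstr(V)$—which holds by continuity of $f$ at the interior point $\delta^*$—gives $\lamstr(V+\delta^*\Ind_\sB)=\lamstr(V)$.

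For (i) I would fix a ball $\sB'\Subset\sB$. Since $\Ind_\sB\equiv1$ on $\sB'$, the monotonicity of the principal eigenvalue with respect to the domain together with a constant shift of the potential yields $f(\delta)\ge\lamstr_{\sB'}(V)+\delta$, where $\lamstr_{\sB'}$ denotes the Dirichlet principal eigenvalue on $\sB'$; as $\lamstr_{\sB'}(V)\ge\lamstr_{\sB'}(0)+\inf_{\sB'}V>-\infty$ is finite, it follows that $f(\delta)\to\infty$, so $f(\delta)>\lamstr(V)$ for $\delta$ large and $\delta^*<\infty$.

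The heart of the argument is (ii). By \cref{T2.1}, the hypothesis that $\lamstr(V)$ is not strictly right monotone at $V$ says exactly that $P\df\Lg_V-\lamstr(V)$ is not critical; and since its generalized principal eigenvalue vanishes, $-P\ge0$ in $\Rd$, so $P$ is subcritical and admits a minimal positive Green's function $G_P$. Set $w_\tau\df\tau\int_\sB G_P(\cdot,y)\,\D{y}$; then $w_\tau>0$ in $\Rd$, $w_\tau\in\Sobl^{2,d}(\Rd)$, and $P w_\tau=-\tau\Ind_\sB$ a.e.\ in $\Rd$. Because $x\mapsto\int_\sB G_P(x,y)\,\D{y}$ is continuous, hence bounded on $\Bar\sB$, I can fix $\tau>0$ small enough that $w_\tau\le1$ on $\sB$, and then
\begin{equation*}
\Lg_{V+\tau\Ind_\sB}w_\tau-\lamstr(V)\,w_\tau\,=\,P w_\tau+\tau\Ind_\sB w_\tau\,=\,\tau\Ind_\sB\bigl(w_\tau-1\bigr)\,\le\,0\,.
\end{equation*}
Thus $w_\tau$ is a positive supersolution of $\Lg_{V+\tau\Ind_\sB}-\lamstr(V)$, and the characterization $\lamstr=\Hat\lambda^*$ recalled earlier (cf.\ \cite[Theorem~1.4]{Berestycki-15}) gives $f(\tau)=\lamstr(V+\tau\Ind_\sB)\le\lamstr(V)$, hence $f(\tau)=\lamstr(V)$ and $\delta^*\ge\tau>0$.

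Finally, for (iii): by continuity $f(\delta^*)=\lamstr(V)$, so the operator $\Lg_{V+\delta^*\Ind_\sB}-\lamstr(V)$ is nonnegative, hence either critical or subcritical. Were it subcritical, it would possess a minimal Green's function, and repeating the construction of (ii) with $V$ replaced by $V+\delta^*\Ind_\sB$ (and using $\lamstr(V+\delta^*\Ind_\sB)=\lamstr(V)$) would produce $\tau>0$ with $f(\delta^*+\tau)=\lamstr(V)$, contradicting the definition of $\delta^*$. Hence it is critical, and $\delta=\delta^*$ works. I expect (ii)—the persistence of nonnegativity of a subcritical operator after the addition of a small multiple of $\Ind_\sB$—to be the only step that is not soft; the Green-potential construction above is the cleanest self-contained route, the one delicate point being the local $\Sobl^{2,d}$-regularity and the boundedness on $\Bar\sB$ of $w_\tau$, which follow from standard elliptic estimates since $\tau\Ind_\sB$ is bounded with compact support. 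Alternatively, one could invoke directly Pinchover's theory of small perturbations of subcritical operators.
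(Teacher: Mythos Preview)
Your argument is correct and takes a genuinely different route from the paper's. The paper constructs, via Dirichlet approximations, stochastic representations, and the It\^o--Krylov formula, a positive function $\Phi$ solving $\Lg\Phi+(V-\lamstr(V))\Phi=-\Ind_\sB$; finiteness of $\Phi$ is obtained from the \emph{transience} of the twisted process (quoting \cite[Lemma~2.7]{ABS}), and the stochastic representation of $\Phi$ then yields strict right monotonicity at $V+\Phi^{-1}\Ind_\sB$ directly via the equivalence (d)\,$\Leftrightarrow$\,(c) from \cite{ABS}. You instead invoke subcriticality to produce the Green's potential $w_\tau=\tau\int_\sB G_P(\cdot,y)\,\D y$, which is of course the same object as the paper's $\Phi$ up to normalization, but obtained by a purely PDE/criticality-theoretic route. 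Your approach is shorter and avoids the probabilistic machinery; the paper's approach, on the other hand, is self-contained relative to \cite{ABS} and yields the stochastic representation of the ground state as a by-product, which is what the paper actually needs downstream (e.g.\ in the proof of \cref{T2.1} and \cref{T2.3}).

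One point you should be careful about: you invoke \cref{T2.1} for the equivalence ``critical $\Leftrightarrow$ strictly right monotone'' both in step~(ii) (to pass from ``not strictly right monotone'' to ``subcritical'') and in step~(iii). But in the paper the implication (a)\,$\Rightarrow$\,(b) in \cref{T2.1} is proved via \cref{C3.2}, which in turn rests on the present \cref{L3.1}; so, as written, your proof is circular within the paper's logical structure. The fix is easy and you essentially gesture at it: the implication ``critical $\Rightarrow$ strictly right monotone'' follows immediately from the characterization of criticality as uniqueness (up to scalar) of positive supersolutions, quoted in the paper from \cites{Pinchover-88,Pinchover-89}. Indeed, if $\lamstr(V+h)=\lamstr(V)$ for some $h\in\cC_0^+(\Rd)$, then any principal eigenfunction for $V+h$ is a positive supersolution for $\Lg_V-\lamstr(V)$ that is not a solution, contradicting criticality. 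Replace both citations of \cref{T2.1} by this direct argument and your proof stands on its own.
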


\begin{proof}
Let $F_\alpha(x) \df V(x) - \lamstr(V) -\alpha$, for $\alpha>0$.
It is evident that the Dirichlet eigenvalue of $-\Lg-F_\alpha$ on every ball
$B_n$ is positive.
Thus by Proposition~6.2 and Theorem~6.1 in \cite{Berestycki-94},
for any $n\in\NN$, the Dirichlet problem
\begin{equation}\label{EL3.1A}
\Lg\varphi_{\alpha,n}(x)+ F_\alpha(x)\,\varphi_{\alpha,n}(x)
\,=\, -\Ind_{\sB}(x)\quad\text{a.e.\ }x\in B_n\,,\quad
\varphi_{\alpha,n}=0\text{\ \ on\ }\partial B_n\,,
\end{equation}
has a unique solution $\varphi_{\alpha,n}\in\Sobl^{2,p}(B_n)\cap\Cc(\Bar{B}_n)$,
for any $p\ge1$.
In addition, by the \emph{refined maximum principle} in
\cite[Theorem~1.1]{Berestycki-94} $\varphi_{\alpha,n}$ is nonnegative.
It is clear that $\varphi_{\alpha,n}$ cannot be identically equal to $0$.
Thus if we write \cref{EL3.1A} as
\begin{equation*}
\Lg\varphi_{\alpha,n}- F_\alpha^-\,\varphi_{\alpha,n}
\,=\, - F_\alpha^+\,\varphi_{\alpha,n} - \Ind_\sB\,,
\end{equation*}
it follows by the strong maximum principle
that $\varphi_{\alpha,n}>0$ in $B_n$.
By the It\^o--Krylov formula in \cref{ito-krylov},
since $\varphi_{\alpha,n}=0$ on $\partial B_n$, we obtain from \cref{EL3.1A} that
\begin{multline}\label{EL3.1B}
\varphi_{\alpha,n}(x) \,=\, \Exp_x\Bigl[\E^{\int_0^{T}
F_\alpha(X_s)\,\D{s}}\,\varphi_{\alpha,n}(X_{T})\,\Ind_{\{T\le\uptau_n\}}\Bigr]\\
+\Exp_x\biggl[\int_0^{T\wedge\uptau_n}
\E^{\int_0^{t}F_\alpha(X_s)\,\D{s}}\,\Ind_\sB(X_t)\,\D{t}\biggr]
\end{multline}
for all $(T,x)\in\RR_+\times B_n$.

Now fix $\alpha>0$.
Let $\Psi$ be a positive principal eigenfunction
of $\Lg+V$ constructed canonically from Dirichlet eigensolutions.
We can scale $\Psi$ so that $\Psi\ge 1$ on $\sB$.
Let $\Psi_\alpha = \alpha^{-1}\Psi$.
Then
\begin{equation*}
\Lg \Psi_\alpha(x) + F_\alpha(x)\,\Psi_\alpha(x)
\,\le\, -\Ind_\sB(x)\qquad\text{a.e.\ }x\in\Rd\,.
\end{equation*}
Using the It\^o--Krylov formula and Fatou's lemma, we obtain
\begin{equation}\label{EL3.1C}
\Psi_\alpha(x) \,\ge\, \Exp_x\Bigl[\E^{\int_0^{\mathfrak{t}}
F_\alpha(X_s)\,\D{s}}\,\Psi_\alpha(X_{\mathfrak{t}})\Bigr]
+ \Exp_x\biggl[\int_0^{\mathfrak{t}}
\E^{\int_0^{\mathfrak{t}}F_\alpha(X_s)\,\D{s}}\,\Ind_\sB(X_t)\,\D{t}\biggr]\,,
\end{equation}
for any finite stopping time $\mathfrak{t}$,
and any $\alpha>0$.
Also, $\Psi$ being an eigenfunction, we have
\begin{align}\label{EL3.1D}
\Psi_\alpha(x)&\,\ge\, \Exp_{x}\,\Bigl[\E^{\int_{0}^{T\wedge\uptau_n}
F_0(X_t)\,\D{t}}\,\Psi_\alpha(X_{T})\,\Ind_{\{T\le\uptau_n\}}\Bigr]\\[5pt]
&\,\ge\, \alpha^{-1}\Bigl(\inf_{B_n}\; \Psi\Bigr)\,\Exp_{x}\,
\Bigl[\E^{\int_{0}^{T\wedge\uptau_n}F_0(X_t)\,\D{t}}\,\Ind_{\{T\le\uptau_n\}}\Bigr]\,.
\nonumber
\end{align}
Thus by \cref{EL3.1D} we have
\begin{multline*}
\Exp_x\Bigl[\E^{\int_0^{T}
F_\alpha(X_s)\,\D{s}}\,\varphi_{\alpha,n}(X_{T})
\,\Ind_{\{T\le\uptau_n\}}\Bigr]\\
\,\le\,
\E^{-\alpha T}\,\biggl(\sup_{B_n}\,\varphi_{\alpha,n}\biggr)
\Exp_x\Bigl[\E^{\int_0^{T}F_0(X_s)\,\D{s}}\,\Ind_{\{T\le\uptau_n\}}\Bigr]\,,
\end{multline*}
and the right hand side tends to $0$ as $T\to\infty$.
Taking limits in \cref{EL3.1B} as $T\to\infty$, using monotone convergence
for the second integral, we obtain
\begin{equation*}
\varphi_{\alpha,n}(x)\,=\, 
 \Exp_x\biggl[\int_0^{\uptau_n}
\E^{\int_0^{t}F_\alpha(X_s)\,\D{s}}\,\Ind_\sB(X_t)\,\D{t}\biggr]\,,
\end{equation*}
which implies by \cref{EL3.1C}
that $\varphi_{\alpha,n}\le\Psi_\alpha$ for all $n\in\NN$.
It therefore follows by the a priori estimates that $\{\varphi_{\alpha,n}\}$
is relatively weakly compact in $\Sob^{2,p}(B_n)$, for any $p\ge1$ and $n\in\NN$,
and thus
$\varphi_{\alpha,n}$ converges uniformly on compact sets
along some sequence $n\to\infty$
to a nonnegative
$\Phi_\alpha\in\Sobl^{2,p}(\Rd)$, for any $p\ge1$, which solves
\begin{equation*}
\Lg \Phi_\alpha(x) + F_\alpha(x)\,\Phi_\alpha(x)
\,=\, -\Ind_\sB(x)\qquad\text{a.e.\ }x\in\Rd\,.
\end{equation*}
It is clear by the strong maximum principle that
$\Phi_\alpha>0$.
Since, as we have already shown, $\varphi_{\alpha,n}\le\Psi_\alpha$ for all $n\in\NN$,
it follows that $\Phi_\alpha\le\Psi_\alpha$.
Using \cref{EL3.1C} with $\mathfrak{t}=T$ and a slightly smaller $\alpha$,
then by \cref{EL3.1B} and dominated convergence, we obtain
\begin{equation}\label{EL3.1E}
\Phi_{\alpha}(x)\,=\, \Exp_x\Bigl[\E^{\int_0^{T}
F_\alpha(X_s)\,\D{s}}\,\Phi_{\alpha}(X_{T})\Bigr]
+\Exp_x\biggl[\int_0^{T}
\E^{\int_0^{t}F_\alpha(X_s)\,\D{s}}\,\Ind_\sB(X_t)\,\D{t}\biggr]
\end{equation}
for all $T>0$ and $x\in\Rd$.
Since \cref{EL3.1B} also holds with $T$ replaced by $T\wedge \uuptau_r$,
then again dominating this by \cref{EL3.1C} with $\mathfrak{t}=\uuptau_r$ and
choosing a slightly smaller $\alpha$, we similarly obtain
\begin{multline}\label{EL3.1F}
\Phi_\alpha(x)\,=\,
\Exp_x\Bigl[\E^{\int_{0}^{\uuptau_r}
F_\alpha(X_s)\,\D{t}}\,\Phi_\alpha(X_{\uuptau_r})\Ind_{\{\uuptau_r<\infty\}}\Bigr]\\
+ \Exp_x\biggl[\int_{0}^{\uuptau_r}
\E^{\int_{0}^{t} F_\alpha(X_s)\,\D{s}}\,\Ind_\sB(X_t)\,\D{t}\biggr]
\end{multline}
for all $x\in B_r^c$ and $r>0$.
Using the bound $\Phi_\alpha\le\Psi_\alpha$ we have 
\begin{align*}
\Exp_x\Bigl[\E^{\int_0^{T} F_\alpha(X_s)\,\D{s}}\,\Phi_{\alpha}(X_{T})\Bigr]
&\,\le\,
\E^{-\alpha T}\Bigl[\E^{\int_0^{T} F_0(X_s)\,\D{s}}\,\Psi_\alpha(X_{T})\Bigr]\\[3pt]
&\,\le\,\E^{-\alpha T}\Psi_\alpha(x)\;\xrightarrow[T\to\infty]{}\;0\,.
\end{align*}
Thus by \cref{EL3.1E}, we obtain
\begin{equation*}
\Phi_\alpha(x) \,=\,
\Exp_x\biggl[\int_0^{\infty}\E^{\int_0^{t}F_\alpha(X_s)\,\D{s}}\,
\Ind_\sB(X_t)\,\D{t}\biggr]\,.
\end{equation*}
Since $\lamstr$ is not strictly right monotone at $V$,
the twisted process is transient, and by \cite[Lemma~2.7]{ABS}
we have
\begin{equation*}
\Exp_0\biggl[\int_0^{\infty}\E^{\int_0^{t}F_0(X_s)\,\D{s}}\,
\Ind_\sB(X_t)\,\D{t}\biggr]\;<\;\infty\,.
\end{equation*}
It follows that $\Phi_{\alpha}(0)$ is bounded uniformly over $\alpha\in(0,1)$,
and therefore is uniformly locally bounded
by the superharmonic Harnack inequality \cite{AA-Harnack}.
Thus letting $\alpha\searrow0$, we obtain a positive
$\Phi$ as a limit of $\Phi_\alpha$, which solves
\begin{equation*}
\Lg \Phi(x) + F_0(x)\,\Phi(x)
\,=\, -\Ind_\sB(x)\qquad\text{a.e.\ }x\in\Rd\,.
\end{equation*}
Write this as
\begin{equation*}
\Lg \Phi(x) + \bigl(V(x)+\Phi^{-1}(x)\Ind_\sB(x)\bigr)\,\Phi(x)
\,=\, \lamstr(V)\Phi(x)\qquad\text{a.e.\ }x\in\Rd\,.
\end{equation*}
On the other hand, taking limits in \cref{EL3.1F} as $\alpha\searrow0$,
choosing $r>0$ such that $B_r\supset \sB$, we obtain
\begin{equation*}
\Phi(x)\,=\,
\Exp_x\Bigl[\E^{\int_{0}^{\uuptau_r}
F_0(X_s)\,\D{t}}\,\Phi(X_{\uuptau_r})\Ind_{\{\uuptau_r<\infty\}}\Bigr]\,.
\end{equation*}
This shows that $\Phi$ has a stochastic representation, which implies
that $\lamstr$ is strictly monotone at $V+\Phi^{-1}\Ind_\sB$ on the right.
Then the monotonicity property of
$\delta\mapsto \lamstr(V+\delta\Ind_\sB)$ implies that
$\lim_{\delta\to\infty}\lamstr(V+\delta\Ind_\sB)>\lamstr(V)$.
So we define
$\delta_0=\inf\{\delta>0\,\colon \lamstr(V+\delta\Ind_\sB)>\lamstr(V)\}$,
then $\lamstr$ is strictly monotone at $V+\delta_0\Ind_\sB$
on the right by \cite[Corollary~2.4]{ABS}.
\end{proof}

\begin{corollary}\label{C3.2}
For any $\lambda>\lamstr(V)$ and ball $\sB$, there exists a constant
$\delta$ such that $\lambda=\lamstr(V+\delta\Ind_\sB)$ and $\lamstr$
is strictly right monotone at $V+\delta\Ind_\sB$.
\end{corollary}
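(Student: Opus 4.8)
The plan is to deduce \cref{C3.2} from \cref{L3.1} by an intermediate value argument applied to the one-parameter family $g(\delta)\df\lamstr(V+\delta\Ind_\sB)$.

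First I would record the elementary properties of $g$ on $[0,\infty)$: it is nondecreasing (monotonicity of $\lamstr$ in the potential), and, being the composition of the convex map $W\mapsto\lamstr(W)$ (see \cite{Berestycki-15}) with the affine map $\delta\mapsto V+\delta\Ind_\sB$, it is convex; since $\lamstr(V)\le g(\delta)\le\lamstr(V)+\delta$ for $\delta\ge 0$ (and $g$ is in fact finite on all of $\RR$), it is continuous. Next I would show that $g(\delta)\to\infty$ as $\delta\to\infty$: restricting a positive supersolution on $\Rd$ to $\sB$ gives the domain monotonicity $\lamstr(V+\delta\Ind_\sB)\ge\lamstr(\Lg,V;\sB)+\delta$, where $\lamstr(\Lg,V;\sB)$ denotes the principal eigenvalue of $\Lg+V$ on the ball $\sB$ (a finite constant, since $\sB$ is bounded and $\Ind_\sB\equiv1$ on $\sB$), and the right-hand side diverges. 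Since $g$ is continuous with $g(0)=\lamstr(V)<\lambda$ and $g(\delta)\to\infty$, there is $\delta_*>0$ with $g(\delta_*)=\lambda$.

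It then remains to verify that $\lamstr$ is strictly right monotone at $W\df V+\delta_*\Ind_\sB$ (note $W$ is bounded below and $\lamstr(W)=\lambda$ is finite, so \cref{L3.1} is applicable with $W$ in place of $V$ and the same ball $\sB$). I would argue by contradiction: if $\lamstr$ is not strictly right monotone at $W$, then \cref{L3.1} furnishes $\delta'>0$ with $\lamstr(W+\delta'\Ind_\sB)=\lamstr(W)$, i.e., $g(\delta_*+\delta')=g(\delta_*)=\lambda$. Thus $g$ is constant on $[\delta_*,\delta_*+\delta']$; writing, for $0\le s<\delta_*$, the point $\delta_*$ as a convex combination of $s$ and $\delta_*+\delta'$, convexity forces $g(\delta_*)\le g(s)$, while monotonicity gives $g(s)\le g(\delta_*)$, so $g$ is constant on $[0,\delta_*]$. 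In particular $\lamstr(V)=g(0)=g(\delta_*)=\lambda$, contradicting $\lambda>\lamstr(V)$. Hence $\lamstr$ is strictly right monotone at $W$, and $\delta=\delta_*$ has the required properties.

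The main point to handle with care is the combination of the two behaviors of $g$ at the two ends of $[0,\infty)$ — its continuity up to the endpoint $\delta=0$ and its divergence as $\delta\to\infty$ — but both are routine consequences of convexity and of domain monotonicity of the generalized principal eigenvalue; once these are in place, the contradiction via \cref{L3.1} is forced purely by the convexity and monotonicity of $g$.
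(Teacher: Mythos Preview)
Your proof is correct and uses essentially the same ingredients as the paper's argument---\cref{L3.1}, together with the monotonicity and convexity of $g(\delta)=\lamstr(V+\delta\Ind_\sB)$. The only cosmetic difference is the order of operations: the paper first applies \cref{L3.1} at $V$ to obtain a threshold $\delta^*\ge0$ beyond which $g$ is strictly increasing (and hence, by convexity, has image $[\lamstr(V),\infty)$), whereas you first locate $\delta_*$ by the intermediate value theorem (using domain monotonicity for the divergence of $g$) and then invoke \cref{L3.1} at $V+\delta_*\Ind_\sB$ to rule out failure of strict right monotonicity by the same convexity argument.
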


\begin{proof}
By Lemma~\ref{L3.1} there exists $\delta^*\ge 0$ such
that $\lamstr(V+\delta^* \Ind_\sB)=\lamstr(V)$,
and $\lamstr$ is strictly right monotone at $V+\delta^* \Ind_\sB$.
Recall that
 the map $\delta\mapsto \lamstr(V+\delta\Ind_\sB)$ is non-decreasing and convex.
Since $\lamstr(V+\delta \Ind_\sB)>\lamstr(V)$ for any $\delta>\delta^*$
by strict right monotonicity, it follows that
this map is strictly increasing and convex on $[\delta^*,\infty)$, and hence
its image is $[\lamstr(V),\infty)$.
\end{proof}

Using \cref{L3.1} we can show the following.

\begin{lemma}\label{L3.2}
Let $\cD=\Rd$.
Assume the hypotheses of \cref{T2.2} and
(A1)--(A2), and in addition,
suppose
that $V_2-V_1\in\cB_0(\Rd)$, and $\Lg_1=\Lg_2$ outside a compact set $K$.
Then $\lamstr_1=\lamstr_2$.
\end{lemma}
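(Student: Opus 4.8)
The plan is to reduce \cref{L3.2} to the already-proven \cref{T2.2} by first establishing the reverse inequality $\lamstr_2 \ge \lamstr_1$; once that holds, the hypotheses of \cref{T2.2} give $\lamstr_1 = \lamstr_2$ directly. So the entire content of the lemma is the inequality $\lamstr_2 \ge \lamstr_1$. The natural strategy is a perturbation argument using \cref{L3.1}: if $\lamstr$ is already strictly right monotone at $V_1$ (equivalently, $P_1 - \lamstr_1$ is critical, which is hypothesis (1) of \cref{T2.2}), then we are in the setting where stochastic representations are available, and the bound \cref{ET2.2B} together with the Itô--Krylov formula should force $\lamstr_2 \ge \lamstr_1$.

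First I would fix a large ball $\sB$ containing the compact set $K$ (outside of which $\Lg_1 = \Lg_2$ and, after shrinking nothing, we may take $V_2 - V_1 \ge 0$; note $V_2 - V_1 \in \cB_0(\Rd)$ ensures the difference is controlled). By hypothesis (1) of \cref{T2.2}, $P_1 - \lamstr_1$ is critical, so by \cref{T2.1} the twisted process associated to $(\Psi_1^*, \lamstr_1)$ is recurrent and $\Psi_1^*$ admits the stochastic representation
\begin{equation*}
\Psi_1^*(x) \,=\, \Exp_x\Bigl[\E^{\int_0^{\uuptau_r}(V_1(X_s)-\lamstr_1)\,\D{s}}\,
\Psi_1^*(X_{\uuptau_r})\,\Ind_{\{\uuptau_r<\infty\}}\Bigr]\,, \quad x \in B_r^c\,,
\end{equation*}
for every $r>0$ (here $X$ is the diffusion generated by $\Lg_1 = \Lg_2$ outside $K$; since the assertion only involves behavior near infinity this is the relevant process). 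Arguing by contradiction, suppose $\lamstr_2 < \lamstr_1$. Then $P_2 - \lamstr_1 = \Lg_2 + V_2 - \lamstr_1$ has strictly negative principal eigenvalue, so there is a positive supersolution — in fact an eigenfunction $\Psi_2^*$ with $P_2 \Psi_2^* = \lamstr_2 \Psi_2^* \le \lamstr_1 \Psi_2^*$ — and one can apply the minimal-growth property of $\Psi_1^*$ together with \cref{ET2.2B} as in the proof of \cref{T2.2} to compare $\Psi_2^*$ and $\Psi_1^*$ near infinity. The key point is that, outside $K$, $\Psi_1^*$ is a subsolution of $\Lg_2 + V_2 - \lamstr_1$ (using $V_2 \ge V_1$ and $\Lg_1 = \Lg_2$ there), so the It\^o--Krylov formula applied to $\Psi_1^*$ along the diffusion stopped at $\uuptau_r$ yields
\begin{equation*}
\Psi_1^*(x) \,\le\, \Exp_x\Bigl[\E^{\int_0^{\uuptau_r}(V_2(X_s)-\lamstr_1)\,\D{s}}\,
\Psi_1^*(X_{\uuptau_r})\,\Ind_{\{\uuptau_r<\infty\}}\Bigr]\,, \quad x \in B_r^c\,,
\end{equation*}
which, combined with recurrence of the $(\Psi_1^*,\lamstr_1)$-twisted process and \cite[Corollary~2.1]{ABS} (or the strict-right-monotonicity dichotomy of \cref{T2.1}), forces the twisted process associated to $\Lg_2 + V_2$ at level $\lamstr_1$ to be recurrent as well, hence $\lamstr_1 = \lamstr(\Lg_2, V_2) = \lamstr_2$, contradicting $\lamstr_2 < \lamstr_1$.

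The main obstacle, I expect, is handling the fact that the drift (and possibly diffusion) coefficients of $\Lg_1$ and $\Lg_2$ differ on the compact set $K$, so there is no single diffusion process governing both operators globally. The way around this is to use the small-perturbation hypothesis: since $\Lg_1$ is a small perturbation of $\Lg_2$, and both agree outside $K$, all the stochastic-representation identities involve only the hitting time $\uuptau_r$ of a ball $B_r \supset K$ and the behavior of the process in $B_r^c$, where the two operators coincide. Thus one works with the diffusion generated by the common operator $\Lg := \Lg_1 = \Lg_2$ on $B_r^c$, treats $\partial B_r$ as a boundary, and invokes the Itô--Krylov formula \cref{ito-krylov} on annular domains $B_R \setminus \Bar B_r$, letting $R \to \infty$ with Fatou's lemma exactly as in the proof of the implication (b) $\Rightarrow$ (a) of \cref{T2.1}. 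A secondary technical point is ensuring integrability so that the dominated/monotone convergence steps go through when passing $T \to \infty$; here the hypothesis $V_2 - V_1 \in \cB_0(\Rd)$ and the bound \cref{ET2.2B} relating $\Psi$ (or $\Psi_2^*$) to $\Psi_1^*$ are precisely what supply the needed domination, mirroring the role played by the analogous bounds in the proof of \cref{T2.2}.
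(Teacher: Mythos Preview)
There is a genuine gap. First, on framing: taken literally the hypotheses include those of \cref{T2.2}, whose conclusion already gives $\lamstr_1=\lamstr_2$, so the lemma would be vacuous; its real content (as invoked in the proof of \cref{T2.4}) is that the argument goes through \emph{without} the sign condition $V_2\ge V_1$ outside a compact set, replacing it by $V_2-V_1\in\cB_0(\Rd)$. Your approach explicitly uses $V_2\ge V_1$ to make $\Psi_1^*$ a subsolution of $\Lg_2+V_2-\lamstr_1$, so it would not serve that purpose. More seriously, your central inference does not work. For a subsolution, the It\^o--Krylov terms at $\uptau_R$ and at time $T$ contribute positively to the right side, so they cannot be discarded to obtain the claimed upper bound on $\Psi_1^*(x)$; and even granting that inequality, the assertion that it ``forces the twisted process associated to $\Lg_2+V_2$ at level $\lamstr_1$ to be recurrent'' is unsupported---you have produced no eigenpair for $\Lg_2+V_2$ at level $\lamstr_1$, and for any such pair with $\lamstr_1>\lamstr_2$ the twisted process is \emph{transient} by \cite[Theorem~2.1(c)]{ABS}. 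In \cref{T2.1} recurrence comes from \emph{equality} in the stochastic representation, not from an inequality of this type. Finally, your argument never uses the subsolution $\Psi$ or the bound \cref{ET2.2B} in any essential way.

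The paper's route is structurally different. It builds criticality on the $V_2$ side: \cref{L3.1} supplies $\delta\ge0$ and a ground state $\Phi_\delta$ of $\Lg_2+V_2+\delta\Ind_\sB$ at level $\lamstr_2$, so the $\Phi_\delta$-twisted process is genuinely recurrent. Because $V_2-V_1\in\cB_0(\Rd)$ and, arguing by contradiction, $\lamstr_1-\lamstr_2>0$, the function $\Phi_\delta$ is a supersolution of $P_1-\lamstr_1$ outside a compact set; minimal growth of $\Psi_1^*$ then gives $\Psi_1^*\le\kappa\Phi_\delta$, and by \cref{ET2.2B} the ratio $\widehat\Phi=\Psi/\Phi_\delta$ is bounded above. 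Now \cref{ET2.2A} yields $\Tilde\Lg\widehat\Phi\ge(\lamstr_1-\lamstr_2)\widehat\Phi$ for the generator $\Tilde\Lg$ of the recurrent twisted process, and It\^o--Krylov gives $\widehat\Phi(x)\le\norm{\widehat\Phi^+}_\infty\,\E^{-(\lamstr_1-\lamstr_2)T}\to0$, contradicting $\Psi^+\ne0$. The ingredients you are missing are the use of \cref{L3.1} to manufacture a recurrent twisted process on the $V_2$ side, and the use of the subsolution $\Psi$ (not $\Psi_1^*$) to extract the contradiction from the spectral gap.
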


\begin{proof}
Suppose that $\lamstr_2<\lamstr_1$.
By \cref{L3.1} there exists $\delta\ge0$, such that
 $\lamstr_2(V_2+\delta\Ind_\sB)$ is strictly monotone at
 $V_2+\delta\Ind_\sB$ on the right,
 and $\lamstr_2(V_2+\delta\Ind_\sB)=\lamstr_2$.
Let $\Phi_\delta$ denote the ground state corresponding
to $\lamstr_2(V_2+\delta\Ind_\sB)$.
Then
\begin{equation*}
\Lg_1\Phi_\delta + (V_1-\lamstr_1)\Phi_\delta\,=\, (\lamstr_2-V_2-\delta\Ind_\sB+ V_1-
\lamstr_1)\Phi_\delta
\end{equation*}
outside the compact set $K$.
Hence by the minimal growth property of $\Psi^*_1$ we have
$\Psi^*_1\le \kappa_1 \Phi_\delta$.
Note that the choice of $\sB$ is arbitrary.
This means we can select $\sB$ so that $\Psi>0$ on $\sB$.
Therefore
\begin{equation*}
\Lg_2\Psi + (V_2+\delta\Ind_\sB)\Psi \,\ge\, \lamstr_1\Psi.
\end{equation*}
Moreover, $\frac{\Psi}{\Phi_\delta}\le \frac{\Psi^+}{\Phi_\delta}$
is bounded above by \cref{ET2.2B}.

By $\Tilde\Lg$ we denote the generator of the twisted process \cref{twisted}
corresponding to $(\Phi_\delta, \lamstr(V_2+\delta\Ind_{\sB}))$ and $\Lg_2$.
Therefore
\begin{equation*}
\Tilde\Lg f \,=\, \Lg_2 f + 2 \langle a_2(x)\grad\varphi_\delta, \grad f\rangle\,,
\quad \text{for}\; f\in\cC^2(\Rd)\,,
\end{equation*}
where $\varphi_\delta=\log\Phi_\delta$.
Since the twisted process \cref{twisted} corresponding
to $(\Phi_\delta, \lamstr(V_2+\delta\Ind_\sB))$ is recurrent by \cref{T2.1},
it exists for all time.
Moreover, we note that
for $\widehat\Phi=\frac{\Psi}{\Phi_\delta}$ we obtain from \cref{ET2.2A} that
\begin{equation*}
\Tilde\Lg\widehat\Phi - (\lamstr_1-\lambda^*_2)\widehat\Phi\,\ge\, 0\,.
\end{equation*}
Now since $\widehat\Phi$ is bounded above,
by applying the It\^o--Krylov formula
to the above equation, we obtain
\begin{equation*}
\widehat\Phi(x)\,\le\, \Tilde\Exp_x\bigl[e^{-(\lamstr_1-\lambda^*_2)T}
\widehat\Phi^+(\Hat{Y}_T)\bigr]
\,\le\, \norm{\widehat\Phi^+}_\infty e^{-(\lamstr_1-\lambda^*_2)T}\,,
\quad\forall\; T>0\,\,.
\end{equation*}
Letting $T\to\infty$ in this inequality, it follows that $\Phi(x)\le 0$ for all $x$,
which contradicts the fact that $\Psi^+\ne 0$.
Hence we have $\lamstr_1=\lamstr_2$.
\end{proof}


Note that the generators $\Lg_1$ and $\Lg_2$ agree outside the compact set $K$.
Therefore, the processes associated to these
generators must agree up to the hitting time $\uuptau(K)$. 

\begin{lemma}\label{L3.3}
Let the assumptions of \cref{T2.3} hold, and
$r>0$ be large enough so that $K\subset B_r$. Then we have
\begin{equation}\label{EL3.3A}
\Psi(x)\,\le\, \Exp_x\Bigl[\E^{\int_0^{\uuptau_r}(V_2(X_s)-\lamstr_1)\, \D{s}}\,
 \Psi^+(X_{\uuptau_r}) \Ind_{\{\uuptau_r<\infty\}}\Bigr]\,.
\end{equation}
\end{lemma}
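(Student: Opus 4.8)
The plan is to use the Itô--Krylov formula \cref{ito-krylov} applied to $\Psi$ on balls $B_n$ with $n>r$, localized away from $B_r$, and to exploit the subsolution inequality \cref{ET2.3B} together with the fact that $\Lg_1=\Lg_2$ outside $K\subset B_r$. Concretely, fix $n>r$ and work on the annular domain $D_n\df B_n\setminus \Bar{B}_r$, whose exit time is $\uptau(D_n)=\uuptau_r\wedge\uptau_n$. On $D_n$ we have $V_2\in\Lpl^\infty$ and $\Lg_2\Psi+(V_2-\lamstr_1)\Psi\ge 0$ a.e., so \cref{ito-krylov} (with $\Lg_V$ there being $\Lg_2+V_2-\lamstr_1$, noting the generator of $X$ outside $B_r$ is $\Lg_2$ since $\Lg_1=\Lg_2$ in $K^c$) gives, for $x\in D_n$ and $T>0$,
\begin{equation*}
\Psi(x)\,\le\, \Exp_x\Bigl[\E^{\int_0^{T\wedge\uuptau_r\wedge\uptau_n}(V_2(X_s)-\lamstr_1)\,\D{s}}\,
\Psi(X_{T\wedge\uuptau_r\wedge\uptau_n})\Bigr]\,.
\end{equation*}
The point is that the ``$\le$'' direction of \cref{ito-krylov} holds for subsolutions after dropping the nonnegative integral term on the right.

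Next I would split the expectation according to whether the process has exited through $\partial B_r$, through $\partial B_n$, or not yet exited by time $T$. On $\{\uuptau_r < \uptau_n\wedge T\}$ the integrand is $\E^{\int_0^{\uuptau_r}(V_2-\lamstr_1)}\Psi(X_{\uuptau_r})\le \E^{\int_0^{\uuptau_r}(V_2-\lamstr_1)}\Psi^+(X_{\uuptau_r})$, which is exactly the kind of term appearing in \cref{EL3.3A}. For the remaining two events I would use the bound $\Psi\le\Psi^+\le C\Psi^*_1$ from the hypothesis, and then dominate $\Psi^*_1$ along the lines of \cref{ET2.1A}/\cref{PT2.1A}: since $P_1-\lamstr_1$ is critical with ground state $\Psi^*_1$, \cref{T2.1}(d) gives a stochastic representation of $\Psi^*_1$ under the process generated by $\Lg_1$, but near infinity (in $B_r^c$, enlarging $r$ if needed so $B_r\supset K$) the process is the same one, so the contribution of $\{\uuptau_r\ge\uptau_n\wedge T\}$ to the right-hand side is controlled and vanishes as first $T\to\infty$ and then $n\to\infty$. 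Finally, Fatou's lemma on the surviving term $\Exp_x[\E^{\int_0^{\uuptau_r}(V_2-\lamstr_1)}\Psi^+(X_{\uuptau_r})\Ind_{\{\uuptau_r<\uptau_n\wedge T\}}]$ yields \cref{EL3.3A}.

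The main obstacle I expect is the control of the ``boundary at infinity'' terms, i.e.\ showing that
\begin{equation*}
\Exp_x\Bigl[\E^{\int_0^{T}(V_2(X_s)-\lamstr_1)\,\D{s}}\,\Psi^+(X_T)\,\Ind_{\{T<\uuptau_r\wedge\uptau_n\}}\Bigr]
\quad\text{and}\quad
\Exp_x\Bigl[\E^{\int_0^{\uptau_n}(V_2(X_s)-\lamstr_1)\,\D{s}}\,\Psi^+(X_{\uptau_n})\,\Ind_{\{\uptau_n<\uuptau_r\wedge T\}}\Bigr]
\end{equation*}
are negligible in the iterated limit. The first is handled by the usual argument: $\E^{\int_0^T(V_2-\lamstr_1)}\Psi^+(X_T)\le C\,\E^{\int_0^T(V_2-\lamstr_1)}\Psi^*_1(X_T)$, and since $\Psi^*_1$ solves $\Lg_1\Psi^*_1+(V_1-\lamstr_1)\Psi^*_1=0$ with $V_1\le\Tilde V$ and $\Lg_1=\Lg_2$ in $B_r^c$, the process $\E^{\int_0^{t}(V_1(X_s)-\lamstr_1)\,\D{s}}\Psi^*_1(X_{t\wedge\uuptau_r\wedge\uptau_n})$ is a supermartingale, giving a uniform bound that lets dominated/monotone convergence kill the $T$-term; the $\uptau_n$-term then vanishes as $n\to\infty$ because $X$ does not explode and $\uuptau_r<\infty$ a.s.\ or else the corresponding probability tends to zero. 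Care is needed because $\Psi$ may be sign-changing and $\Tilde V=\max\{V_1,V_2\}$ rather than $V_2$ appears in \cref{ET2.3A}; one uses $\Tilde V$ precisely to get a single comparison supersolution $\Tilde\Phi$ valid on $B_r^c$, and then replaces $\Tilde V$ by $V_2$ in the final inequality using $V_2\le\Tilde V$ only where it helps, i.e.\ in bounding $\Psi^+$-terms from above. I would keep the exponent $\int_0^{\uuptau_r}(V_2-\lamstr_1)$ on the event $\{\uuptau_r<\uptau_n\wedge T\}$ exactly as is, since there $X_s$ stays in $B_r^c$ where $\Lg_1=\Lg_2$ and no sign issue with $\Tilde V$ arises.
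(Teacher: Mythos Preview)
Your overall strategy matches the paper's: apply It\^o--Krylov on the annulus, split into three pieces, and pass to the iterated limit $T\to\infty$, then $n\to\infty$. However, both ``bad'' terms are handled too loosely, and one of them has a genuine gap.

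For the time-$T$ term, your supermartingale observation yields only boundedness, not decay to zero. The exponent contains $V_2-\lamstr_1$, not $V_1-\lamstr_1$, so the $(V_1,\Psi^*_1)$-martingale identity does not directly control it, and no relation $V_2\le V_1$ is assumed. The paper instead invokes the Dirichlet principal eigenpair $(\Psi_{R+1},\lambda_{R+1})$ of $\Lg_2+V_2$ on $B_{R+1}$: since $\lambda_{R+1}<\lamstr_2\le\lamstr_1$, one bounds the term (for fixed $R$) by a constant times $\E^{(\lambda_{R+1}-\lamstr_1)T}$, which vanishes as $T\to\infty$.

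More seriously, your argument for the $\uptau_n$-term is incomplete. Saying ``$\uuptau_r<\infty$ a.s.\ or else the corresponding probability tends to zero'' does not suffice: the diffusion may well be transient, and even when $\Prob_x(\uptau_n<\uuptau_r)\to 0$, the integrand $\E^{\int_0^{\uptau_n}(V_2-\lamstr_1)\,\D{s}}\,\Psi^+(X_{\uptau_n})$ is unbounded in $n$, so the expectation need not vanish. The paper's device is to bound $\Psi^+\le C\Psi^*_1$, then use the stochastic representation of $\Psi^*_1(X_{\uptau_R})$ from \cref{T2.1}(d) together with the strong Markov property to obtain
\[
\Exp_x\Bigl[\E^{\int_0^{\uptau_R}(V_2-\lamstr_1)\,\D{s}}\,\Psi^*_1(X_{\uptau_R})\,\Ind_{\{\uptau_R<\uuptau_r\}}\Bigr]
\,\le\, \Exp_x\Bigl[\E^{\int_0^{\uuptau_r}(\Tilde V-\lamstr_1)\,\D{s}}\,\Psi^*_1(X_{\uuptau_r})\,\Ind_{\{\uptau_R<\uuptau_r<\infty\}}\Bigr],
\]
using $V_2\le\Tilde V$ on $[0,\uptau_R]$ and $V_1\le\Tilde V$ on $[\uptau_R,\uuptau_r]$. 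Hypothesis \cref{ET2.3A} on $\Tilde\Phi$ is precisely what guarantees $\Exp_x\bigl[\E^{\int_0^{\uuptau_r}(\Tilde V-\lamstr_1)\,\D{s}}\,\Ind_{\{\uuptau_r<\infty\}}\bigr]<\infty$, so dominated convergence applies as $R\to\infty$ (since $\uptau_R\to\infty$ a.s.). You allude to $\Tilde\Phi$ but never articulate this mechanism; without it, the limit cannot be justified.
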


\begin{proof}
Choose $R>r$ and $x\in B_R\setminus B_r$.
Applying the It\^o--Krylov formula to \cref{ET2.4A} we obtain
\begin{align}\label{EL3.3B}
\Psi(x) &\,\le\, \Exp_x\Bigl[\E^{\int_0^{\uuptau_r}(V_2(X_s)-\lamstr_1)\, \D{s}}\,
\Psi(X_{\uuptau_r}) \Ind_{\{\uuptau_r<\uptau_R\wedge T\}}\Bigr]\\
&\mspace{90mu}
+ \Exp_x\Bigl[\E^{\int_0^{\uptau_R}(V_2(X_s)-\lamstr_1)\, \D{s}}\,
\Psi(X_{\uptau_R}) \Ind_{\{\uptau_R<\uuptau_r\wedge T\}}\Bigr]\nonumber\\
&\mspace{180mu} + \Exp_x\Bigl[\E^{\int_0^{T}(V_2(X_s)-\lamstr_1)\, \D{s}}\,
\Psi(X_{T}) \Ind_{\{T\le \uuptau_r\wedge\uptau_R\}}\Bigr]\nonumber\\[5pt]
&\,\le\, \Exp_x\Bigl[\E^{\int_0^{\uuptau_r}(V_2(X_s)-\lamstr_1)\, \D{s}}\,
\Psi^+(X_{\uuptau_r}) \Ind_{\{\uuptau_r<\uptau_R\wedge T\}}\Bigr]\nonumber\\
&\mspace{90mu}
+  \underbrace{\Exp_x\Bigl[\E^{\int_0^{\uptau_R}(V_2(X_s)-\lamstr_1)\, \D{s}}\,
\Psi^+(X_{\uptau_R}) \Ind_{\{\uptau_R<\uuptau_r\wedge T\}}\Bigr]}_{\sI_1}
\nonumber\\
&\mspace{180mu}+ \underbrace{\Exp_x\Bigl[\E^{\int_0^{T}(V_2(X_s)-\lamstr_1)\, \D{s}}\,
\Psi(X_{T}) \Ind_{\{T\le \uuptau_r\wedge\uptau_R\}}\Bigr]}_{\sI_2}\,.\nonumber
\end{align}

We first show that $\sI_2$ tends $0$ as $T\to \infty$.
By $(\Psi_R, \lambda_R)$ we denote the
principal eigenpair of $\Lg_2 + V_2$ in $B_R$ with Dirichlet boundary condition.
It is known that $\lambda_R$ is strictly increasing to $\lamstr_2$ as 
$R\to\infty$.
An application of the It\^o--Krylov formula shows that
\begin{align}\label{EL3.3C}
\Psi_{R+1}(x) &\,=\, \Exp_x\Bigl[\E^{\int_0^{\uuptau_r}(V_2(X_s)-\lambda_{R+1})\, \D{s}}
\,\Psi_{R+1}(X_{\uuptau_r}) \Ind_{\{\uuptau_r<\uptau_R\wedge T\}}\Bigr]
\\ 
&\mspace{60mu} + \Exp_x\Bigl[\E^{\int_0^{\uptau_R}(V_2(X_s)-\lambda_{R+1})\, \D{s}}\,
\Psi_{R+1}(X_{\uuptau_R}) \Ind_{\{\uptau_R<\uuptau_r\wedge T\}}\Bigr]\nonumber
\\
&\mspace{120mu} + \Exp_x\Bigl[\E^{\int_0^{T}(V_2(X_s)-\lambda_{R+1})\, \D{s}}\,
\Psi_{R+1}(X_{T}) \Ind_{\{T\le \uuptau_r\wedge\uptau_R\}}\Bigr]\nonumber
\end{align}
for $x\in B_R\setminus B_r$.
Since $\lambda_R<\lamstr_2\le \lamstr_1$ and $\Psi_{R+1}>0$ in $B_{R+1}$, we
deduce that
\begin{align*}
\sI_2 &\,=\, \Exp_x\Bigl[\E^{\int_0^{T}(V_2(X_s)-\lamstr_1)\, \D{s}}\,
\Psi(X_{T}) \Ind_{\{T\le \uuptau_r\wedge\uptau_R\}}\Bigr]\\[5pt]
&\,\le\, \frac{1}{\min_{B_R}\Psi_{R+1}}\max_{B_{R}} \abs{\Psi}\,
\Exp_x\Bigl[\E^{\int_0^{T}(V_2(X_s)-\lamstr_1)\, \D{s}}\,\Psi_{R+1}(X_{T})
\Ind_{\{T\le \uuptau_r\wedge\uptau_R\}}\Bigr] \\[5pt]
&\le \; \frac{\E^{(\lambda_{R+1}-\lamstr_1)T}}{\min_{B_R}\Psi_{R+1}}
\Bigl(\max_{B_{R}}\,
\abs{\Psi}\Bigr) \; \Psi_{R+1}(x)\to 0, \quad \text{as\ } T\to\infty,
\end{align*}
where in the last inequality we used \cref{EL3.3C}.

Therefore letting $T\to\infty$ in \cref{EL3.3B} and using the monotone
convergence theorem, we obtain
\begin{align}\label{EL3.3D}
\Psi(x) &\,\le\, \Exp_x\Bigl[\E^{\int_0^{\uuptau_r}(V_2(X_s)-\lamstr_1)\, \D{s}}\,
\Psi^+(X_{\uuptau_r}) \Ind_{\{\uuptau_r<\uptau_R\}}\Bigr] \\[3pt] 
&\mspace{150mu}
+\underbrace{\Exp_x\Bigl[\E^{\int_0^{\uptau_R}(V_2(X_s)-\lamstr_1)\, \D{s}}\,
\Psi^+(X_{\uptau_R}) \Ind_{\{\uptau_R<\uuptau_r\}}\Bigr]}_{\sI_3}\,.\nonumber
\end{align}
We next show that $\limsup\sI_3\le 0$ as $R\to\infty$. 
Recall that $P_1-\lamstr_1$ is critical and therefore, by \cref{T2.1},
we have
\begin{equation}\label{EL3.3E}
\Psi^*_1(x)\;= \; \Exp_x\Bigl[\E^{\int_0^{\uuptau_r}(V_1(X_s)-\lamstr_1)\, \D{s}}\,
 \Psi^*_1(X_{\uuptau_r}) \Ind_{\{\uuptau_r<\infty\}}\Bigr],
\quad x\in B^c_r\,.
\end{equation}
Since $\Psi^+\le C\Psi^*_1$ by \cref{ET2.4B}, we see using \cref{EL3.3E} that 
\begin{align}\label{EL3.3F}
\sI_3 &\;\le \; C \Exp_x\Bigl[\E^{\int_0^{\uptau_R}(V_2(X_s)-\lamstr_1)\, \D{s}}\,
\Psi^*_1(X_{\uptau_R}) \Ind_{\{\uptau_R<\uuptau_r\}}\Bigr]
\\[5pt]
&\,=\,C \Exp_x\biggl[\E^{\int_0^{\uptau_R}(V_2(X_s)-\lamstr_1)\, \D{s}}\,
\Ind_{\{\uptau_R<\uuptau_r\}}\nonumber\\
&\mspace{200mu}\Exp_{X_{\uptau_R}} 
\Bigl[\E^{\int_0^{\uuptau_r}(V_1(X_s)-\lamstr_1)\, \D{s}}\,
\Psi^*_1(X_{\uuptau_r}) \Ind_{\{\uuptau_r<\infty\}}\Bigr]\biggr]\nonumber
\\[5pt]
&\,\le\,C \Exp_x\Bigl[\E^{\int_0^{\uuptau_r}(\Tilde{V}(X_s)-\lamstr_1)\, \D{s}}\,
\Ind_{\{\uptau_R<\uuptau_r<\infty\}} 
\Psi^*_1(X_{\uuptau_r}) \Bigr]\,,\nonumber
\end{align}
where in the third line we used strong Markov property.
On the other hand, using \cref{ET2.3A} we note that
\begin{equation*}
\Exp_x\Bigl[\E^{\int_0^{\uuptau_r}(\Tilde{V}(X_s)-\lamstr_1)\, \D{s}}\,
\Ind_{\{\uuptau_r<\infty\}} \Bigr]\;<\; \infty, \quad \text{for}\; \abs{x}>r\,.
\end{equation*}
Therefore, since $\uptau_R\to \infty$ a.s. as $R\to\infty$,
applying the dominated convergence theorem to \cref{EL3.3F} we have 
\begin{equation}\label{EL3.3G}
\limsup_{R\to\infty}\; \sI_3\,\le\, 0\,.
\end{equation}
Hence, \cref{EL3.3A} follows from \cref{EL3.3D,EL3.3G}
by applying  the monotone convergence theorem.
\end{proof}

We are now ready to present the proofs of \cref{T2.3,T2.4}.

\begin{proof}[Proof of \cref{T2.3}]
Without loss of generality, we may assume that the compact $K$ is large enough so that
there exists a ball $\sB\subset K$ satisfying
$\Psi>0$ in $\sB$. Using \cref{L3.1},
we deduce that there exists $\delta\ge0$ such that
$\lamstr(V_2+\delta\Ind_\sB)=\lamstr_2$, and $\lamstr$ is strictly monotone at
$V_2+\delta\Ind_\sB$ on the right.
Let $\Phi_\delta$ be the ground state of the operator
$\Lg + V_2+\delta\Ind_\sB-\lamstr_2$.
Then, for any $r>0$, we have from \cref{T2.1} that
\begin{equation}\label{PT2.3A}
\Phi_\delta(x)\,=\, \Exp_x\Bigl[e^{\int_0^{\uuptau_r}(V_2(X_s)
+\delta\Ind_{\sB}(X_s)-\lamstr_2)\, \D{s}}\,
\Phi_\delta(X_{\uuptau_r}) \Ind_{\{\uuptau_r<\infty\}}\Bigr],
\quad x\in B^c_r\,.
\end{equation}
Fix $r>0$ large enough so that $K\subset B_r$.
Since $\lamstr_2\le \lamstr_1$ we obtain from \cref{L3.3} and \cref{PT2.3A} that
$\Psi\le \kappa_1 \Phi_\delta$. Define $\widehat\Phi=\frac{\Psi}{\Phi_\delta}$.
Let $\Tilde\Lg$ be the generator of the twisted process
corresponding to $(\Phi_\delta, \lamstr_2)$ and  $\Lg_2$. 
Since $\Lg_2\Psi+(V_2+\delta\Ind_\sB-\lamstr_1)\Psi\ge 0$, we have
\begin{equation}\label{PT2.3B}
\Tilde\Lg \widehat\Phi + (\lamstr_2-\lamstr_1)\widehat\Phi\,\ge\, 0\,.
\end{equation}
Thus repeating the arguments in the proof of \cref{L3.2}, we obtain
$\lamstr_1=\lamstr_2$.
But  it then follows from \cref{PT2.3B} that
$\{\widehat\Phi(Y_s)\}$ is a submartingale which is bounded above.
This of course, implies that $\widehat{\Phi}(Y_s)$ converges almost surely as 
$s\to\infty$.
Since $Y_s$ is recurrent, $\widehat{\Phi}$ has to be constant,
implying that $\Psi=\kappa_2\Phi_\delta$ for some positive $\kappa_2>0$.
Using \cref{ET2.3B}, we obtain $\delta=0$, and this completes the proof.
\end{proof}

\begin{proof}[Proof of \cref{T2.4}]
Let $K$ be a compact set such that $\Lg_1\equiv\Lg_2$ in $K^c$.
Let $h\in\cC^+_0(\Rd)$ be a function with compact support.
Then we know that
\begin{equation*}
\beta\mapsto \Lambda_\beta\,=\,\lamstr(V_{1}+\beta h)
\end{equation*}
is an increasing, convex function \cite[Proposition~2.3]{Berestycki-15}.
In addition, it is strictly monotone at $\beta=0$.
Let $\beta_c\df\inf\,\{\beta\in\RR\,\colon\Lambda_\beta>\Lambda_{-\infty}\}$.
It is then clear that $\beta_c<0$, and hence it follows from
\cite[Theorem~2.7]{ABS} that for some $\beta<0$, close to $0$, 
the twisted process corresponding to the eigenpair
$(\Psi_\beta, \Lambda_\beta)$ and $\Lg_1$ is recurrent
(in fact, geometrically ergodic), and 
$\Lambda_\beta<\lamstr_1=\Lambda_0$.
We also have
\begin{equation}\label{PT2.4A}
\Lg_1 \Psi_\beta + (V_1+\beta h)\Psi_\beta\,=\, \Lambda_\beta \Psi_\beta\,.
\end{equation}
Moreover, by \cref{T2.1}, $\Psi_\beta$ has a stochastic representation, i.e.,
\begin{equation}\label{PT2.4B}
\Psi_\beta(x) \,=\, \Exp_x\Bigl[e^{\int_0^{\uuptau_r}(V_1(X_s)-\Lambda_\beta)\, \D{s}}\,
\Psi_\beta(X_{\uuptau_r}) \Ind_{\{\uuptau_r<\infty\}}\Bigr].
\end{equation}
In \cref{PT2.4B} we use a radius $r$ large enough so that
the support of $h$ and the set $K$ lie in $B_r$.
Also by \cref{L3.2} we have $\lamstr_1=\lamstr_2$.
Let $\delta=\frac{1}{2}(\lamstr_1-\Lambda_\beta)>0$.
It is clear that we can choose $r$ large enough so that
\begin{equation*}
V_2(x)-\lamstr_2 +\delta \,=\, V_2(x)-\lamstr_1 +\delta\; < V_1(x) - \Lambda_\beta
\quad \text{for all\ } \abs{x}\ge r\,.
\end{equation*}
For such a choice of $r$, we note from \cref{PT2.4B} that
\begin{equation}\label{PT2.4C}
\Exp_x\Bigl[e^{\int_0^{\uuptau_r}(V_2(X_s)-\lamstr_2+\delta)\, \D{s}}\,
\Ind_{\{\uuptau_r<\infty\}}\Bigr]\;<\;\infty, \quad \abs{x}\ge r.
\end{equation}
Using \cref{PT2.4C} and the arguments in \cite[Theorem~2.2]{ABS}
(see for instance, (2.30) in \cite{ABS}) it is easy to show
that $\lamstr_2$ is strictly monotone at $V_2$.

Therefore, in order to complete the proof, it remains to show that $\Psi$ is a positive
multiple of $\Psi^*_2$. 
Since $V_1+\beta h-\Lambda_\beta\ge V_1-\lamstr_1$ outside some compact
set $K_0$, we obtain from \cref{PT2.4A} that
\begin{equation*}
\Lg_1 \Psi_\beta + (V_1-\lamstr_1)\Psi_\beta \,\le\, 0\qquad \forall\,x\in K_0^c\,.
\end{equation*}
Therefore, by the minimal growth at infinity of $\Psi^*_1$, we can find
a constant $\kappa_\beta$ satisfying $\Psi^*_1\le\kappa_\beta\Psi_\beta$ in $\Rd$.
Combining this with \cref{ET2.4B}, we have $\Psi^+\le C\kappa_\beta\Psi_\beta$.
As earlier, we fix $r$ large enough so that
$V_2(x)-\lamstr_2< V_1(x)-\Lambda_\beta$, $\Lg_1\equiv\Lg_2$,
and $h(x)=0$ for $\abs{x}\ge r$.
We apply the It\^o--Krylov formula to \cref{ET2.4A} to obtain
\begin{multline*}
\Psi(x)\,\le\, \Exp_x\Bigl[e^{\int_0^{\uuptau_r}(V_2(X_s)-\lamstr_2)\, \D{s}}\,
\Psi(X_{\uuptau_r}) \Ind_{\{\uuptau_r<\uptau_R\}}\Bigr]\\[5pt]
+ \Exp_x\Bigl[e^{\int_0^{\uptau_R}(V_2(X_s)-\lamstr_2)\, \D{s}}\, \Psi(X_{\uptau_R}) 
\Ind_{\{\uuptau_r>\uptau_R\}}\Bigr]\,.
\end{multline*}
By the choice of $r$, we can estimate the second term as follows
\begin{multline}\label{PT2.4D}
\Exp_x\Bigl[e^{\int_0^{\uptau_R}(V_2(X_s)-\lamstr_2)\, \D{s}}\, \Psi^+(X_{\uuptau_r}) 
\Ind_{\{\uuptau_r>\uptau_R\}}\Bigr]\\[5pt]
\,\le\, \kappa_2 \Exp_x\Bigl[e^{\int_0^{\uptau_R}(V_1(X_s)-\Lambda_\beta)\, \D{s}}\,
\Psi_\beta(X_{\uptau_R}) \Ind_{\{\uuptau_r>\uptau_R\}}\Bigr]\,.
\end{multline}
The right hand side of \cref{PT2.4D} tends to $0$, as $R\to\infty$, by \cref{PT2.4B}.
Hence letting $R\to\infty$, we obtain
\begin{equation*}
\Psi(x) \,\le\, \Exp_x\Bigl[e^{\int_0^{\uuptau_r}(V_2(X_s)-\lamstr_2)\, \D{s}}\,
\Psi(X_{\uuptau_r}) \Ind_{\{\uuptau_r<\infty\}}\Bigr]\,.
\end{equation*}
Since $\Psi^*_2$ also has a stochastic representation by \cref{T2.1}, this implies that
$\Psi\le \kappa_1 \Psi^*_2$
for some $\kappa_1>0$.
With $\Phi=\frac{\Psi}{\Psi^*_2}$ we have
\begin{equation*}
\Tilde\Lg\Phi\;\ge \;0\,,
\end{equation*}
where $\Tilde\Lg$ is the generator of twisted process $Y$ corresponding to
$(\Psi^*_2, \lamstr_2)$ and $\Lg_2$.
Thus, $\{\Phi(Y_s)\}$ is a submartingale which is bounded from above.
Since the twisted process $Y$ is recurrent by \cref{T2.1}, $\Phi$ must be constant.
Since $\Psi^+\ne 0$, this implies that
$\Psi$ is a positive function, which means of course, 
that it is a positive multiple of $\Psi^*_2$.
\end{proof}

One interesting by-product of the proof of \cref{T2.4} is the corollary
that follows.
This result however might be known, but we could not locate it
in the literature.

\begin{corollary}
Let $\Lg$ be the operator in \cref{E-Lg}, and 
$\lamstr$ be the principal eigenvalue of $\Lg+V$,
where $V$ is a locally bounded function. 
In addition, suppose that $\Lg+V-\lamstr$ is critical, and
let $\Psi^*_1$ denote the ground state.
Then, there does not exist any non-zero solution $\Psi\in\Sobl^{2, d}(\Rd)$ of
$\Lg \Psi + V\Psi =\lambda \Psi$, for  $\lambda> \lamstr$,
with $\abs{\Psi}\le \kappa \Psi^*_1$.
\end{corollary}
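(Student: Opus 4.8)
The plan is to imitate the ground-state-transform argument already used in the proofs of \cref{L3.2,T2.3,T2.4}. Suppose, towards a contradiction, that such a $\Psi$ exists: $\Psi\in\Sobl^{2,d}(\Rd)$ is non-zero, $\Lg\Psi+V\Psi=\lambda\Psi$ with $\lambda>\lamstr$, and $\abs{\Psi}\le\kappa\Psi^*_1$ on $\Rd$. Since $\Lg+V-\lamstr$ is critical with ground state $\Psi^*_1$, \cref{T2.1} shows that the twisted process $Y$ associated with the eigenpair $(\Psi^*_1,\lamstr)$ and $\Lg$ is recurrent, hence non-explosive and defined for all $t\ge0$. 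Write $\Tilde\Lg f=\Lg f+2\langle a(x)\grad\psi^*,\grad f\rangle$, with $\psi^*=\log\Psi^*_1$, for its generator.

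First I would settle the regularity needed to make the quotient meaningful. Since $V\in\Lpl^\infty(\Rd)$, interior $\Lp^p$-estimates upgrade both $\Psi^*_1$ and $\Psi$ to $\Sobl^{2,p}(\Rd)$ for every $p<\infty$, hence to $\Cc^{1,\alpha}_{\mathrm{loc}}(\Rd)$; as $\Psi^*_1$ is continuous and strictly positive, $\Phi\df\Psi/\Psi^*_1$ again lies in $\Sobl^{2,p}(\Rd)$ for all $p<\infty$, with $\abs{\Phi}\le\kappa$. The ground-state transform then gives, a.e.\ in $\Rd$,
\begin{equation*}
\Tilde\Lg\Phi\,=\,(\lambda-\lamstr)\,\Phi\,,
\end{equation*}
and we set $c\df\lambda-\lamstr>0$.

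Finally I would run the probabilistic argument of \cref{L3.2}. Applying the It\^o--Krylov formula to $\E^{-c(t\wedge\uptau_R)}\Phi(Y_{t\wedge\uptau_R})$ and using $\Tilde\Lg\Phi-c\Phi=0$ gives $\Phi(x)=\Tilde\Exp_x\bigl[\E^{-c(T\wedge\uptau_R)}\Phi(Y_{T\wedge\uptau_R})\bigr]$ for all $R,T>0$. Since $\Phi\le\kappa$ and $\uptau_R\to\infty$ a.s.\ by non-explosion, Fatou's lemma applied to the non-negative quantity $\kappa-\E^{-c(T\wedge\uptau_R)}\Phi(Y_{T\wedge\uptau_R})$ lets me pass to the limit $R\to\infty$, yielding $\Phi(x)\le\Tilde\Exp_x\bigl[\E^{-cT}\Phi(Y_T)\bigr]\le\kappa\,\E^{-cT}$, and then $T\to\infty$ forces $\Phi\le0$ on $\Rd$. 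The same reasoning applied to $-\Phi$, which also satisfies $\Tilde\Lg(-\Phi)-c(-\Phi)=0$ and is bounded above, gives $\Phi\ge0$; hence $\Phi\equiv0$, i.e.\ $\Psi\equiv0$, the desired contradiction. No step presents a serious obstacle once \cref{T2.1} is available; the only point needing care is the passage $R\to\infty$ in the stochastic representation, and that is precisely where criticality enters, through the non-explosion of the twisted process, and it is handled verbatim as in the proof of \cref{L3.2}.
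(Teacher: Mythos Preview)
Your proof is correct and follows essentially the same route that the paper indicates: the corollary is stated as an immediate by-product of the proof of \cref{T2.4}, and the relevant ingredient is precisely the ground-state transform combined with the recurrence of the twisted process, exactly as in the argument of \cref{L3.2} that you reproduce. Your treatment is in fact a bit more careful than the paper's (you explain the regularity of $\Phi$ and justify the passage $R\to\infty$ explicitly); note also that since here $|\Phi|\le\kappa$ and $\Tilde\Lg\Phi=c\Phi$ is an equality, bounded convergence already gives $\Phi(x)=\Tilde\Exp_x\bigl[\E^{-cT}\Phi(Y_T)\bigr]$ directly, so the two separate Fatou applications, while correct, can be replaced by a single step.
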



We can improve the above results to a larger class of potentials if we impose a 
`stability' condition of the underlying dynamics $X$. Let us assume the
following
\begin{itemize}
\item[\bf(H)] There exists a lower-semicontinuous, inf-compact function
$\ell\colon\Rd\to[0, \infty)$ such that 
\begin{equation*}
\limsup_{T\to\infty}\;\frac{1}{T}\, \Exp_x\Bigl[e^{\int_0^T \ell(X_s)\, \D{s}}\Bigr]
\;<\; \infty \quad \text{for all\ } x\in\Rd\,.
\end{equation*}
\end{itemize}

By $\sorder(\ell)$ we denote the collection of functions $f\colon\Rd\to\RR$ satisfying
\begin{equation*}
\limsup_{\abs{x}\to\infty}\, \frac{\abs{f(x)}}{\ell(x)}\,=\,0\,.
\end{equation*}
We say that the elliptic operator
$\Lg$ satisfies (H) if the process $X$ with extended generator $\Lg$ satisfies (H).
It is easy to see that under hypothesis (H), the process is recurrent.
Therefore, if (H) holds for $\Lg_1$, it follows from \cite[Lemma~2.3]{ari-anup}
that $\lamstr_1(\ell)$ is finite.
Moreover, there exists a positive eigenfunction $\varphi_1\in\Sobl^{2, p}(\Rd)$, $p>1$,
with $\inf_{\Rd}\varphi_1>0$, that satisfies
\begin{equation*}
\Lg_1 \varphi_1 + (\ell-\lamstr_1(\ell))\varphi_1\,=\, 0, \quad \text{in}\ \Rd\,.
\end{equation*}
If $\Lg_2$ is a small perturbation of $\Lg_1$, then $\Lg_2$ also satisfies (H).
To see this, consider a ball $\sB\subset\Rd$
such that $\Lg_1=\Lg_2$ in $\sB^c$.
Let $\chi\colon\Rd\to[0, 1]$ be a smooth function that vanishes in $\sB$ and
equals $1$ outside a ball $B_r\supset\Bar\sB$.
Define $\varphi_2=(1-\chi)+\chi\varphi_1$. 
Note that $\varphi_2=1$ in $\sB$, and
$\varphi_2\ge 1\wedge\inf_{\Rd}\varphi_1>0$ on $\Rd$. 
Then, for some positive constants $\kappa_1$ and $\kappa_2$, we have
\begin{align}\label{E-H}
\Lg_2\varphi_2 &\,=\, \Lg_2 (1-\chi) + \chi \Lg_2\varphi_1 + \varphi_1\Lg_2\chi
+ 2\langle a_2\grad\chi, \grad\varphi_1\rangle
\\[3pt]
&\;= \; \Lg_1 (1-\chi) + \chi \Lg_1\varphi_1 + \varphi_1\Lg_1\chi
+ 2\langle a_1\grad\chi, \grad\varphi_1\rangle
\nonumber\\[3pt]
&\;= \; \Lg_1 (1-\chi) + \chi (\lamstr_1(\ell)-\ell)\varphi_1 + \varphi_1\Lg_1\chi
+ 2\langle a_1\grad\chi, \grad\varphi_1\rangle
\nonumber\\[3pt]
&\,\le\, (\kappa_1-\ell)\,\varphi_1
\nonumber\\[3pt]
&\,\le\, (\kappa_2-\ell)\,\varphi_2 \quad\text{on}\ \Rd\,.\nonumber
\end{align}
In \cref{E-H}, the first inequality arises from the fact that
$\inf_{\Rd}\varphi_1>0$, while in the second inequality we use the
fact that $\varphi_1=\varphi_2$ on $B_r^c$, and $\inf_{\Rd}\varphi_2>0$.
Equation \eqref{E-H} of course implies that 
\begin{equation*}
\limsup_{T\to\infty}\frac{1}{T} \Exp_x\Bigl[e^{\int_0^T \ell(X^2_s)\, \D{s}}\Bigr]
\;<\; \kappa_2 \quad \text{for all\ } x\in\Rd\,,
\end{equation*}
where $X^2$ denotes the diffusion process with generator $\Lg_2$.

We have the following result.

\begin{theorem}\label{T3.1}
Let all the assumptions of \cref{T2.4} hold, 
except we replace $V_1-V_2\in\cB_0(\Rd)$ with $V_i\in\sorder(\ell)$.
Moreover, assume that (H) holds for $\Lg_1$.
Then the conclusion of \cref{T2.4} also holds.
\end{theorem}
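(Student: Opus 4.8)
The plan is to run the proof of \cref{T2.4} essentially unchanged, replacing every step that there relied on $V_1-V_2\in\cB_0(\Rd)$ --- the finiteness of the exponential functional in \cref{PT2.4C} and the vanishing of the $X_{\uptau_R}$-boundary terms as $R\to\infty$ --- by a single estimate extracted from \textup{(H)}. Recall from the discussion preceding the theorem that \textup{(H)} then holds for $\Lg_2$ as well, and that there is a positive $\varphi_1\in\Sobl^{2,p}(\Rd)$, $p>1$, with $\inf_{\Rd}\varphi_1>0$, solving $\Lg_1\varphi_1+(\ell-\lamstr_1(\ell))\varphi_1=0$. The key lemma I would establish is the following \emph{master estimate}: if $W$ is bounded below with $W\in\sorder(\ell)$, then for every $\mu\in\RR$ and every $\epsilon\in(0,1)$ there is $r_0=r_0(\epsilon,\mu)$ such that for all $r\ge r_0$,
\begin{equation*}
\Exp_x\Bigl[\E^{\int_0^{\uuptau_r}(W(X_s)-\mu)\,\D{s}}\,\Ind_{\{\uuptau_r<\infty\}}\Bigr]
\,\le\,\frac{\varphi_1^{\epsilon}(x)}{\bigl(\inf_{\partial B_r}\varphi_1\bigr)^{\epsilon}}\,<\,\infty\,,
\qquad x\in B_r^c\,.
\end{equation*}
This follows by noting that $\varphi_1^{\epsilon}\in\Sobl^{2,p}$ (using $\inf_{\Rd}\varphi_1>0$) and, since $\langle a_1\grad\varphi_1,\grad\varphi_1\rangle\ge0$ and $\epsilon-1<0$,
\begin{equation*}
\Lg_1\bigl(\varphi_1^{\epsilon}\bigr)+(W-\mu)\varphi_1^{\epsilon}
\,\le\,\bigl(W+\epsilon\lamstr_1(\ell)-\mu-\epsilon\ell\bigr)\varphi_1^{\epsilon}\,\le\,0
\qquad\text{in }B_{r_0}^c\,,
\end{equation*}
the last inequality because $W\in\sorder(\ell)$ and $\ell$ is inf-compact, so the bounded term $W+\epsilon\lamstr_1(\ell)-\mu$ is eventually dominated by $\epsilon\ell$; since $\Lg_1=\Lg_2$ outside a compact set, $\Lg_1$ may be replaced by $\Lg_2$ here. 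Applying the It\^o--Krylov formula, letting first $T\to\infty$ and then the outer radius $\to\infty$, and using Fatou's lemma then yields the bound, exactly as in the step (b)$\Rightarrow$(a) of the proof of \cref{T2.1}.

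With the master estimate in hand I would first prove $\lamstr_1=\lamstr_2$ together with the identification of $\Psi$; this is the analogue of \cref{L3.2}, obtained through the analogue of \cref{L3.3}. Applying the It\^o--Krylov formula to \cref{ET2.4A} on $B_R\setminus B_r$ up to time $T$ and splitting the stopped expectation at $\uuptau_r$, $\uptau_R$ and $T$ as in the proof of \cref{L3.3}: the $X_T$-term vanishes as $T\to\infty$ via the Dirichlet principal eigenpairs $(\Psi_{R+1},\lambda_{R+1})$ of $\Lg_2+V_2$ on $B_{R+1}$ (using $\lambda_{R+1}<\lamstr_2\le\lamstr_1$); and the $X_{\uptau_R}$-term, after bounding $\Psi^+\le C\Psi^*_1$ via \cref{ET2.4B} and inserting the stochastic representation of the ground state $\Psi^*_1$ from \cref{T2.1} at $\uptau_R$ by the strong Markov property, is dominated by $\bigl(\sup_{\partial B_r}\Psi^*_1\bigr)\,\Exp_x\bigl[\E^{\int_0^{\uuptau_r}(\Tilde{V}(X_s)-\lamstr_1)\,\D{s}}\Ind_{\{\uptau_R<\uuptau_r<\infty\}}\bigr]$, with $\Tilde{V}\df\max\{V_1,V_2\}$; here the master estimate (with $W=\Tilde{V}$, $\mu=\lamstr_1$) makes the integrand integrable, so this tends to $0$ by dominated convergence since $\Ind_{\{\uptau_R<\uuptau_r<\infty\}}\to0$ a.s.\ This yields $\Psi(x)\le\Exp_x\bigl[\E^{\int_0^{\uuptau_r}(V_2(X_s)-\lamstr_1)\,\D{s}}\Psi^+(X_{\uuptau_r})\Ind_{\{\uuptau_r<\infty\}}\bigr]$. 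The remainder mirrors the proof of \cref{T2.3}: by \cref{L3.1} pick $\delta\ge0$ with $\lamstr(V_2+\delta\Ind_{\sB})=\lamstr_2$ and $\lamstr$ strictly right monotone there, $\sB$ chosen so that $\Psi>0$ on $\sB$; combine the last display with the stochastic representation of the ground state $\Phi_\delta$ of $\Lg_2+V_2+\delta\Ind_{\sB}-\lamstr_2$ to obtain $\Psi\le\kappa_1\Phi_\delta$; then run the submartingale argument for $\widehat\Phi=\Psi/\Phi_\delta$ against the recurrent twisted process of $(\Phi_\delta,\lamstr_2)$. If $\lamstr_2<\lamstr_1$ the resulting exponential bound forces $\widehat\Phi\le0$, contradicting $\Psi^+\ne0$; hence $\lamstr_1=\lamstr_2$, whence $\widehat\Phi$ is constant, \cref{ET2.4A} forces $\delta=0$, and $\Psi$ is a positive multiple of $\Psi^*_2$, the ground state of the critical operator $\Lg_2+V_2-\lamstr_2$.

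It remains to upgrade criticality to strict monotonicity of $\lamstr_2$ at $V_2$. For this the master estimate, applied with $W=V_2$ and $\mu=\lamstr_2-\delta$, gives, for every $\delta>0$ and all $r$ large,
\begin{equation*}
\Exp_x\Bigl[\E^{\int_0^{\uuptau_r}(V_2(X_s)-\lamstr_2+\delta)\,\D{s}}\,\Ind_{\{\uuptau_r<\infty\}}\Bigr]
\,<\,\infty\,,\qquad x\in B_r^c\,,
\end{equation*}
which is exactly \cref{PT2.4C}; the argument of \cite[Theorem~2.2]{ABS} (cf.\ (2.30) there) then yields that $\lamstr_2$ is strictly monotone at $V_2$, as in the proof of \cref{T2.4}. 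Along the way one uses that any $V_1+\beta h$, with $h\in\cC^+_0(\Rd)$ of compact support, is bounded below and lies in $\sorder(\ell)$, so that \textup{(H)} still supplies the finiteness of $\lamstr$ and the ergodicity/monotonicity inputs of \cites{ABS,ari-anup} required at this step.

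The step I expect to be the main obstacle is the master estimate, specifically making the supersolution property of $\varphi_1^{\epsilon}$ rigorous: verifying $\varphi_1^{\epsilon}\in\Sobl^{2,p}$, justifying the pointwise a.e.\ chain rule for $\Lg_1(\varphi_1^{\epsilon})$, and --- for each fixed small $\epsilon$ --- checking that the bounded remainder $W+\epsilon\lamstr_1(\ell)-\mu$ is genuinely absorbed by $\epsilon\ell$ outside a ball depending only on $\epsilon$ and $\mu$. This is the only place where $V_i\in\sorder(\ell)$ and the inf-compactness of $\ell$ enter, and it plays exactly the role that the requirement $V_1-V_2\to0$ played in \cref{T2.4}. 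A secondary point is that the results of \cite{ABS} on strict monotonicity (Theorems~2.2 and~2.7) must be applied to potentials in $\sorder(\ell)$ rather than to bounded ones, which is legitimate precisely because \textup{(H)} provides the Lyapunov/tightness bound underlying those arguments.
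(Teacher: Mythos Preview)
Your proposal is correct and runs parallel to the paper's proof, with two organizational differences worth noting. First, the paper opens by invoking \cite[Theorem~3.2]{ABS} to obtain strict monotonicity of $\lamstr$ at both $V_1$ and $V_2$ directly from (H) and $V_i\in\sorder(\ell)$, so only $\lamstr_1=\lamstr_2$ and $\Psi=\Psi^*_2$ remain to be shown; you instead derive strict monotonicity last, from your master estimate via \cref{PT2.4C} and \cite[Theorem~2.2]{ABS}, which amounts to the same thing. Second, and more substantively, the $X_{\uptau_R}$-boundary term is handled differently. You follow the template of \cref{L3.3}: insert the stochastic representation of $\Psi^*_1$ via the strong Markov property, dominate the resulting $\Tilde V$-exponential by your master estimate, and apply dominated convergence. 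The paper instead first proves a pointwise growth bound $\Psi^*_1\le\kappa(\widehat V^1)^\theta$ (the paper's $\widehat V^1$ is your $\varphi_1$; the bound comes from the representation of $\Psi^*_1$ together with Jensen's inequality applied to $t\mapsto t^\theta$), and then splits the boundary expectation on the level set $\{\Psi^*_1(X_{\uptau_R})\ge m\}$, using $\Psi^*_1\le m$ on the complement and $\Psi^*_1\le\kappa_1 m^{1-1/\theta}\widehat V^1$ on the level set, sending first $R\to\infty$ and then $m\to\infty$. Your master estimate with $\varphi_1^\epsilon$ and the paper's use of $(\widehat V^1)^\theta$ are the same fractional-power supersolution trick; your route is more modular and stays closer to the proofs of \cref{L3.3,T2.3}, while the paper's level-set splitting avoids the strong-Markov concatenation step at the cost of a slightly more delicate double-limit argument.
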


\begin{proof}
By \cite[Theorem~3.2]{ABS} we know that $\lamstr$ is strictly monotone at both
$V_1$ and $V_2$. Therefore, in order to complete the proof, we only need to show that
$\lamstr_1=\lamstr_2$ and $\Psi^*_2=\Psi$. 
Since $\ell$ is inf-compact, (H) implies that the processes
$X^i$, $i=1,2$, are recurrent.
Moreover, there exists a positive $\widehat{V}^i\in\Sobl^{2,p}(\Rd)$, $p\ge 1$, such that
\begin{equation}\label{ET3.1A}
\Lg_i \widehat{V}^i + \ell \widehat{V}^i\,=\, \lamstr_i(\ell) \widehat{V}^i
\quad \text{in\ } \Rd, \; i=1,2\,,
\end{equation}
and $\inf_{\Rd}\widehat{V}^i>0$. Let $B_r$ be a ball such that 
\begin{equation}\label{ET3.1Ab}
\babs{V_i(x)-\max\{\lamstr_1(\ell),\lamstr_2(\ell)\}}\,\le\,
 \theta\bigl(\ell(x)-\max\{\lamstr_1(\ell),\lamstr_2(\ell)\}\bigr)
 \quad \forall\,x\in B_r^c\,,
\end{equation}
and $\Lg_1=\Lg_2$, $i=1,2$, on $B_r^c$, for some constant 
$\theta\in (0,1)$.
Recall that $\uuptau_r$ denotes the first hitting time to $B_r$.
Since both processes agree
outside $B_r$, in what follows we use $X$ to denote any one of these processes.
Then applying the It\^o--Krylov formula
to \cref{ET3.1A}, followed by Fatou's lemma, we obtain
\begin{equation}\label{ET3.1B}
\Exp_x\Bigl[e^{\int_0^{\uuptau_r} (\ell(X_s)-\lamstr_i(\ell))\, \D{s}}\,\widehat{V}^i
(X_{\uuptau_r})\Bigr]\,\le\, \widehat{V}^i(x) \quad \text{for}\; x\in B^c_r\,.
\end{equation}
We can choose $B_r$ large enough so that $\Psi^+\ne 0$ in $B_r$.
Let $\sB\Subset B_r$ be such that $\Psi>0$ in $\sB$. 
By \cref{L3.1} we can find $\delta\ge 0$ such that $\lamstr$
is strictly monotone on the right at $V_2+\delta \bm1_\sB$ and
$\lamstr_2=\lamstr(V_2+\delta \bm1_\sB)$. Let
 $\Psi_\delta$ be
the corresponding principal eigenfunction.
By \cref{T2.1} we have
\begin{equation}\label{ET3.1C}
\Exp_x\Bigl[e^{\int_0^{\uuptau_r} (V_2(X_s)-\lamstr_2)\, \D{s}}\,
\Psi_\delta(X_{\uuptau_r})\Bigr]\,=\, \Psi_\delta(x) \quad \text{for}\; x\in B^c_r\,.
\end{equation}
Since $\Lg_1+V_1-\lamstr_1$ is critical by hypothesis, we have 
\begin{equation}\label{ET3.1D}
\Exp_x\Bigl[e^{\int_0^{\uuptau_r} (V_1(X_s)-\lamstr_1)\, \D{s}}\,
\Psi^*_1(X_{\uuptau_r})\Bigr]\,=\, \Psi^*_1(x) \quad \text{for}\; x\in B^c_r\,.
\end{equation}
It follows by \cref{ET3.1Ab,ET3.1B,ET3.1D} that
$\Psi^*_1(x)\le \kappa (\widehat{V}^1(x))^\theta$ in $\Rd$, for some constant $\kappa$.

We claim that 
\begin{equation}\label{ET3.1E}
\Exp_x\Bigl[e^{\int_0^{\uptau_R} (V_2(X_s)-\lamstr_1)\, \D{s}}\,\Psi^*_1(X_{\uptau_R})
\Ind_{\{\uptau_R<\uuptau_r\}}\Bigr]\;\xrightarrow[R\to\infty]{}\; 0\,.
\end{equation}
To prove the claim we define
$\Gamma(R, m)=\{x\in\partial B_r\,\colon \Psi^*_1(x)\ge m\}$ for $m\ge 1$. Then
\begin{align*}
\Exp_x\Bigl[e^{\int_0^{\uptau_R} (V_2(X_s)-\lamstr_1)\, \D{s}}
& \,\Psi^*_1(X_{\uptau_R})\Ind_{\{\uptau_R<\uuptau_r\}}\Bigr]\\[3pt]
&\le\, m\Exp_x\Bigl[e^{\int_0^{\uptau_R} \theta(\ell (X_s)-\lamstr_1(\ell))\, \D{s}}\,
\Ind_{\{\uptau_R<\uuptau_r\}}\Bigr]\\
&\qquad+ \Exp_x\Bigl[e^{\int_0^{\uptau_R} (V_2(X_s)-\lamstr_1)\, \D{s}}\,
 \Psi^*_1(X_{\uptau_R})\Ind_{\{x\in\Gamma(R, m)\}}\Ind_{\{\uptau_R<\uuptau_r\}}\Bigr]
\\[3pt]
&\le\; m\Exp_x\Bigl[e^{\int_0^{\uptau_R} \theta(\ell (X_s)-\lamstr_1(\ell))\, \D{s}}\,
\Ind_{\{\uptau_R<\uuptau_r\}}\Bigr]\\
&\qquad+ \kappa_1 m^{1-\nicefrac{1}{\theta}}\Exp_x\Bigl[e^{\int_0^{\uptau_R}
(\ell(X_s)-\lamstr_1(\ell))\, \D{s}}\,\widehat{V}^1(X_{\uptau_R})
\Ind_{\{\uptau_R<\uuptau_r\}}\Bigr]
\\[3pt]
&\le\; m\Exp_x\Bigl[e^{\int_0^{\uptau_R} \theta(\ell (X_s)-\lamstr_1(\ell))\, \D{s}}\,
\Ind_{\{\uptau_R<\uuptau_r\}}\Bigr]
+ \kappa_1 m^{1-\nicefrac{1}{\theta}} \widehat{V}^1(x)\,.
\end{align*}
Then \cref{ET3.1E} follows by first letting $R\to\infty$, and then $m\to\infty$.

Applying the It\^o--Krylov formula \cref{ito-krylov} to \cref{ET2.4A}
we obtain
\begin{equation}\label{ET3.1F}
\Psi(x)\,\le\, \Exp_x\Bigl[e^{\int_0^{\uuptau_r\wedge\uptau_R\wedge T}
(V_2(X_s)-\lamstr_1)\, \D{s}}\,\Psi(X_{\uuptau_r\wedge\uptau_R\wedge T})\Bigr]\,,
\quad T>0\,.
\end{equation}
Since $\abs{V_2-\lamstr_2}\le \ell-\lamstr_1(\ell)$ in $B_r^c$, and 
\begin{equation*}
\Exp_x\Bigl[e^{\int_0^{\uuptau_r\wedge\uptau_R}
(\ell(X_s)-\lamstr_1(\ell))\, \D{s}}\Bigr]\;<\; \infty\,, \quad r<\abs{x}<R\,,
\end{equation*}
for every fixed $R>r$, we have
\begin{equation*}
\Exp_x\Bigl[e^{\int_0^{T} (V_2(X_s)-\lamstr_1)\, \D{s}}\,\Psi(X_T)
\Ind_{\{T\le \uuptau_r\wedge \uptau_R\}}\Bigr]\;\xrightarrow[T\to\infty]{}\; 0\,.
\end{equation*}
Hence, first letting $T\to\infty$, and then $R\to\infty$ in \cref{ET3.1F},
and using \cref{ET3.1E,ET2.4B}, we obtain
\begin{equation*}
\Psi(x)\,\le\, \Exp_x\Bigl[e^{\int_0^{\uuptau_r} (V_2(X_s)-\lamstr_1)\, \D{s}}\,
\Psi(X_{\uuptau_r})\Bigr]\,.
\end{equation*}
Combining this with \cref{ET3.1C} we have $\Psi\le C_1 \Psi_\delta$.
Now mimicking the arguments in the last part of the proof of \cref{T2.3}, we
obtain $\lamstr_1=\lamstr_2$,
and $\Psi=\Psi_\delta$ with $\delta=0$.
\end{proof}

We next exhibit a family of operators for which (H) holds.

\begin{example}
Let $\delta_1 I\le a(x)\le \delta_2 I$, for $\delta_1, \delta_2>0$ and $x\in\Rd$.
Also $b(x)=b_1(x)+b_2(x)$ where $b_2\in L^\infty(\Rd)$, and
\begin{equation*}
\langle b_1(x), x\rangle \,\le\, -\kappa \abs{x}^{\alpha}
\quad \text{on the complement of a compact set in\ } \Rd\,,
\end{equation*}
for some constant $\kappa>0$, and some $\alpha\in(1, 2]$.
Let $\zeta$ be a positive, twice differentiable function in $\Rd$ such
that $\zeta(x)=\exp(\theta\abs{x}^\alpha)$ for $\abs{x}\ge 1$.
If we choose $\theta\in(0, 1)$ small enough, then
it is routine to check that there exists $R_0>0$ such that
\begin{equation*}
\Lg \zeta(x)\; \le \; -\frac{\kappa\theta}{2}\abs{x}^{2\alpha-2} \zeta(x)
\qquad\text{for\ }\abs{x}\ge R_0\,.
\end{equation*}
The above inequality is known as a (geometric) Foster--Lyapunov stability condition
and $\zeta$ is generally referred to as a Lyapunov function.
Therefore, if we choose a function $\ell$ which coincides with
$\frac{\kappa\theta}{2}\abs{x}^{2\alpha-2}$ outside a compact set,
then using the above inequality and It\^o's formula one can easily verify that (H) holds.
\end{example}

\section{A lower bound on the decay of eigenfunctions}\label{Sec-4}

The main goal of this section is to exhibit a \textit{sharp}
lower bound on the decay of supersolutions, and also to use
this estimate to prove several results for positive solutions.

\begin{lemma}\label{L4.1}
Suppose that there exist
positive constants $M$ and $\eta_0$, and some $\beta\in [0, 2]$ such that
\begin{equation}\label{EL4.1A}
\babs{\langle b(x), x\rangle}\,\le\, M\abs{x}^\beta\,,\quad\text{and}\quad
\langle \xi, a(x)\xi\rangle\,\ge\, \eta_0 \abs{\xi}^2\quad\forall\,\xi\in\Rd
\end{equation}
for all $x$ outside some compact set in $\Rd$.
Let $\alpha\ge\beta$ and $K, \gamma$ be any positive constants satisfying
\begin{equation}\label{EL4.1B}
K\alpha>\,\frac{M}{2\eta_0}+\sqrt{\frac{M^2}{4\eta_0^2}+ \frac{\gamma}{\eta_0}}\,,\quad\text{and}\quad
\lim_{\abs{x}\to \infty}\; \frac{1}{\abs{x}^\alpha}\sum_{i=1}^d a^{ii}(x)\,=\,0\,,
\end{equation}
and define
$\Lyap(x)\df\exp(-K\abs{x}^\alpha)$.
Then there exists $r_0>0$ such that for every $r\ge r_0$ we have
\begin{equation}\label{EL4.1C}
\Exp_x\Bigl[e^{-\gamma \int_0^{\uuptau_r}
\abs{X_s}^{2\alpha-2}\, \D{s}}\,
\Lyap(X_{\uuptau_r})\Ind_{\{\uuptau_r<\infty\}}\Bigr]\,\ge\, \Lyap(x)
\quad \text{for\ } \abs{x}\ge r.
\end{equation}
\end{lemma}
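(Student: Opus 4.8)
The plan is to verify that $\Lyap$ is a subsolution of the operator $\Lg-\gamma\abs{x}^{2\alpha-2}$ on the complement of a sufficiently large ball, and then to deduce \cref{EL4.1C} from the It\^o--Krylov formula \cref{ito-krylov}, applied with the Feynman--Kac weight $\E^{-\gamma\int_0^{\cdot}\abs{X_s}^{2\alpha-2}\,\D{s}}$, by the usual localization on annuli. Note that $\Lyap\le1$, and that $\Lyap$ is smooth on $\Rd\setminus\{0\}$; since every estimate below is used only on a set of the form $B_{r_0}^c$ with $r_0>0$, one may, if one insists on a globally $\Sobl^{2,d}(\Rd)$ test function, replace $\Lyap$ near the origin by an arbitrary smooth extension without affecting anything.

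First I would compute $\Lg\Lyap$. Writing $\phi(x)\df-K\abs{x}^\alpha$, so $\Lyap=\E^{\phi}$, one has $\partial_i\phi=-K\alpha\abs{x}^{\alpha-2}x_i$ and $\partial_{ij}\phi=-K\alpha\bigl[(\alpha-2)\abs{x}^{\alpha-4}x_ix_j+\abs{x}^{\alpha-2}\delta_{ij}\bigr]$, hence
\begin{equation*}
\Lg\Lyap \,=\, \Lyap\Bigl[a^{ij}\,\partial_i\phi\,\partial_j\phi + b^i\,\partial_i\phi + a^{ij}\,\partial_{ij}\phi\Bigr]\,.
\end{equation*}
The gradient term is bounded below using the ellipticity in \cref{EL4.1A}: $a^{ij}\partial_i\phi\,\partial_j\phi=K^2\alpha^2\abs{x}^{2\alpha-4}\langle x,a(x)x\rangle\ge\eta_0K^2\alpha^2\abs{x}^{2\alpha-2}$. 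For the drift term, $\babs{\langle b(x),x\rangle}\le M\abs{x}^\beta$ together with $\beta\le\alpha$ gives, for $\abs{x}\ge1$, $b^i\partial_i\phi=-K\alpha\abs{x}^{\alpha-2}\langle b(x),x\rangle\ge-MK\alpha\abs{x}^{\alpha+\beta-2}\ge-MK\alpha\abs{x}^{2\alpha-2}$. The Hessian term equals $-K\alpha(\alpha-2)\abs{x}^{\alpha-4}\langle x,a(x)x\rangle-K\alpha\abs{x}^{\alpha-2}\trace a(x)$, and since $a(x)$ is symmetric positive definite one has $0\le\langle x,a(x)x\rangle\le\abs{x}^2\trace a(x)$, whence $\babs{a^{ij}\partial_{ij}\phi}\le K\alpha(\abs{\alpha-2}+1)\abs{x}^{\alpha-2}\trace a(x)=\sorder\bigl(\abs{x}^{2\alpha-2}\bigr)$ by the second limit in \cref{EL4.1B}. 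Collecting the three estimates,
\begin{equation*}
\Lg\Lyap(x)-\gamma\abs{x}^{2\alpha-2}\Lyap(x)\,\ge\,\abs{x}^{2\alpha-2}\Lyap(x)\bigl[\eta_0K^2\alpha^2-MK\alpha-\gamma-\varepsilon(x)\bigr]\,,
\end{equation*}
with $\varepsilon(x)\to0$ as $\abs{x}\to\infty$. The first inequality in \cref{EL4.1B} says exactly that $K\alpha$ exceeds the larger root of the quadratic $t\mapsto\eta_0t^2-Mt-\gamma$, i.e.\ $c_0\df\eta_0K^2\alpha^2-MK\alpha-\gamma>0$; hence there is $r_0\ge1$, which we also take large enough that $B_{r_0}^c$ avoids the compact set in \cref{EL4.1A}, such that $\Lg\Lyap-\gamma\abs{x}^{2\alpha-2}\Lyap\ge\frac{c_0}{2}\abs{x}^{2\alpha-2}\Lyap\ge0$ on $B_{r_0}^c$.

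Finally, fixing $r\ge r_0$ and $R>r$, I would apply \cref{ito-krylov} on the annulus $\cD=B_R\setminus\Bar{B}_r$ with $V(x)=-\gamma\abs{x}^{2\alpha-2}$ (bounded on $\cD$) and exit time $\uptau=\uuptau_r\wedge\uptau_R$; since $\Lg_V\Lyap\ge0$ on $\cD$ it yields, for every $T>0$ and $\abs{x}\ge r$,
\begin{equation*}
\Exp_x\Bigl[\E^{-\gamma\int_0^{\uuptau_r\wedge\uptau_R\wedge T}\abs{X_s}^{2\alpha-2}\,\D{s}}\,\Lyap(X_{\uuptau_r\wedge\uptau_R\wedge T})\Bigr]\,\ge\,\Lyap(x)\,.
\end{equation*}
As the integrand is bounded by $1$ and the exit time from the bounded set $\cD$ is a.s.\ finite, letting $T\to\infty$ (dominated convergence) removes the $T$. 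Splitting the resulting expectation along $\{\uuptau_r<\uptau_R\}$ and $\{\uptau_R\le\uuptau_r\}$: on the latter event $\abs{X_{\uptau_R}}=R$, so that piece is at most $\Lyap(X_{\uptau_R})=\E^{-KR^\alpha}\to0$ as $R\to\infty$ (the diffusion is non-explosive under (A2), so $\uptau_R\to\infty$ a.s.), while $\Ind_{\{\uuptau_r<\uptau_R\}}\uparrow\Ind_{\{\uuptau_r<\infty\}}$, so monotone convergence on the former piece gives precisely \cref{EL4.1C}. The only genuine obstacle is the Hessian term $a^{ij}\partial_{ij}\phi$: a priori it could be of order $\abs{x}^{2\alpha-2}$ when $\alpha\ne2$, and the point is that the trace-decay condition in \cref{EL4.1B}, combined with $\langle x,a(x)x\rangle\le\abs{x}^2\trace a(x)$, forces it to be genuinely lower order irrespective of the sign of $\alpha-2$; everything else is routine bookkeeping.
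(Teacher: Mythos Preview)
Your proof is correct and follows essentially the same approach as the paper: verify that $\Lyap$ satisfies $\Lg\Lyap\ge\gamma\abs{x}^{2\alpha-2}\Lyap$ outside a large ball, then apply the It\^o--Krylov formula on annuli $B_R\setminus\Bar B_r$ and let $R\to\infty$, using $\Lyap(X_{\uptau_R})=\E^{-KR^\alpha}\to0$ to kill the outer boundary contribution. The only cosmetic difference is in the bookkeeping of $\Lg\Lyap$: the paper groups the $x_ix_j$-part of the Hessian together with the gradient-squared term and applies the ellipticity lower bound $\langle x,a(x)x\rangle\ge\eta_0\abs{x}^2$ to the combined factor, whereas you bound the entire Hessian contribution $a^{ij}\partial_{ij}\phi$ as $\sorder(\abs{x}^{2\alpha-2})$ via $\langle x,a(x)x\rangle\le\abs{x}^2\trace a(x)$ and the trace-decay hypothesis; both routes lead to the same quadratic inequality $\eta_0(K\alpha)^2-M(K\alpha)-\gamma>0$.
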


\begin{proof}
By \eqref{EL4.1B} we see that
\begin{equation}\label{RL01}
\eta_0(K\alpha)^2  - M (K\alpha) -\gamma\,>0\,.
\end{equation}
We have
\begin{align*}
\frac{\partial}{\partial x_i} \Lyap(x) &\,=\, - K\alpha \abs{x}^{\alpha-2} x_i \Lyap\\
\frac{\partial^2}{\partial x_i \partial x_j}\Lyap(x)
&\,=\, (K\alpha)^2 \abs{x}^{2\alpha-4} x_ix_j \Lyap(x)
-K\alpha(\alpha-2)\abs{x}^{\alpha-4}x_ix_j \Lyap(x)\\
&\mspace{280mu}-K\alpha
\abs{x}^{\alpha-2}\Lyap(x)\delta_{ij}
\end{align*}
for $1\le i, j\le d$, and $\abs{x}\ge 1$.
This implies that
\begin{align*}
\Lg\Lyap(x) &\,=\, K\alpha \abs{x}^{2\alpha-2}
\biggl(K\alpha - \frac{\alpha-2}{\abs{x}^\alpha}\biggr)
\frac{\Lyap(x)}{\abs{x}^2} \langle x, a x\rangle\\
&\mspace{180mu}
- K\alpha\abs{x}^{\alpha-2}\Lyap(x) \sum_{i=1}^d a^{ii}(x)
+ \langle b(x), \grad\Lyap(x)\rangle\\
&\,\ge\, K\alpha \abs{x}^{2\alpha-2}\Biggl(K\alpha\eta_0-M
- \frac{\eta_0(\alpha-2)}{\abs{x}^\alpha}
- \frac{1}{\abs{x}^\alpha}\sum_{i=1}^d a^{ii}(x)\Biggr)\Lyap(x)\,,
\end{align*}
which combined with \cref{EL4.1B} and \eqref{RL01} shows that there exists
$r_0\ge1$, such that
\begin{equation}\label{EL4.1D}
\Lg\Lyap(x) \,\ge\, \gamma \abs{x}^{2\alpha-2}\Lyap(x)\qquad
\text{for\ } \abs{x}\ge r_0\,.
\end{equation}
Let $R>r\ge r_0$.
Applying the It\^{o}--Krylov formula to \cref{EL4.1D}, we obtain
\begin{align}\label{EL4.1E}
\Lyap(x) &\,\le\, \Exp_x\Bigl[e^{-\gamma \int_0^{\uuptau_r} \abs{X_s}^{2\alpha-2}\,
\D{s}}\, \Lyap(X_{\uuptau_r})\Ind_{\{\uuptau_r<\uptau_R\}}\Bigr]\\[3pt]
&\mspace{180mu}
+ \Exp_x\Bigl[e^{-\gamma \int_0^{\uptau_R} \abs{X_s}^{2\alpha-2}\, \D{s}}\,
\Lyap(X_{\uptau_R})\Ind_{\{\uptau_R<\uuptau_r\}}\Bigr]\nonumber\\[5pt]
&\,\le\, \Exp_x\Bigl[e^{-\gamma \int_0^{\uuptau_r} \abs{X_s}^{2\alpha-2}\, \D{s}}\,
\Lyap(X_{\uuptau_r})\Ind_{\{\uuptau_r<\infty\}}\Bigr] \nonumber\\[3pt]
&\mspace{180mu}
+ \Exp_x\Bigl[e^{-\gamma \int_0^{\uptau_R} \abs{X_s}^{2\alpha-2}\, \D{s}}\,
\Lyap(X_{\uptau_R})\Ind_{\{\uptau_R<\uuptau_r\}}\Bigr]\,.\nonumber
\end{align}
for $r\le \abs{x}\le R$.
On the other hand, 
\begin{equation*}
\Exp_x\Bigl[e^{-\gamma \int_0^{\uptau_R} \abs{X_s}^{2\alpha-2}\, \D{s}}\,
\Lyap(X_{\uptau_R})\Ind_{\{\uptau_R<\uuptau_r\}}\Bigr]
\,\le\, \Exp_x\Bigl[ \Lyap(X_{\uptau_R})\Ind_{\{\uptau_R<\uuptau_r\}}\Bigr]
\,\le\, e^{-K R^\alpha}\to 0\,,
\end{equation*}
as $R\to\infty$.
Thus by letting $R\to\infty$ in \cref{EL4.1E}, we obtain \cref{EL4.1C}.
\end{proof}

The above result should be compared with Carmona \cite{Carmona-78},
Carmona and Simon \cite{Carmona-Simon},
 where a weaker lower bound was obtained for L\'{e}vy processes. 
In these papers, the stationarity and independent
increment property of the underlying process is used,
and also the proof is much more complicated.
For instance, see \cite[Proposition~4.1]{Carmona-78} when $X$ is a Brownian motion.
We next use \cref{L4.1} to provide a quantitative estimate on the decay of
positive supersolutions in the outer domain.

\begin{theorem}\label{T4.1}
Assume \cref{EL4.1A}, and let 
$\gamma$, $\alpha$, and $\Lyap$ be as in \cref{L4.1}.
Let $\sK\subset\Rd$ be a compact set, and suppose
$u\in\Sobl^{2, d}(\sK^c)$ is a nontrivial nonnegative function such that
\begin{equation}\label{ET4.1A}
\Lg u + V u\,=\, 0 \quad \text{in\ } \sK^c\,,
\end{equation} 
where $V$ is locally bounded,
and $V(x)\ge -\gamma \abs{x}^{2\alpha-2}$ for all $\abs{x}$ sufficiently large.
Then there exists a positive constant $C$, not depending on $u$,
provided we fix $u(x_0)=1$ at some $x_0\in\sK^c$, and $r>0$ such that 
\begin{equation}\label{ET4.1B}
u(x) \,\ge\, C\, \Lyap(x) \quad \text{for\ } \abs{x}> r\,.
\end{equation}
\end{theorem}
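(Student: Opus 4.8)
The plan is to combine the Feynman--Kac (It\^o--Krylov) representation of $u$ with the lower bound furnished by \cref{L4.1}. First I would fix $r>0$ large enough that $\sK\subset B_r$, that $r\ge r_0$ so that \cref{L4.1} applies, and that $V(x)\ge-\gamma\abs{x}^{2\alpha-2}$ holds for all $\abs{x}\ge r$; I would also take $r$ large enough that $\partial B_r$ and $x_0$ lie in the (unbounded) connected component of $\sK^c$. Since $u\ge0$ is nontrivial and $\Lg u+Vu=0$ in $\sK^c$, it satisfies $\Lg u-V^-u=-V^+u\le0$ there, so the strong maximum principle forces $u>0$ in $\sK^c$; in particular $m_r\df\min_{\partial B_r}u>0$, using that $u\in\Sobl^{2,d}$ is continuous.

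The next step is to apply the It\^o--Krylov formula \cref{ito-krylov} to $u$ on the annulus $B_R\setminus\Bar{B}_r$ (with $R>r$), whose first exit time is $\uuptau_r\wedge\uptau_R$. As $\Lg_V u=0$ on the annulus, this yields the identity $u(x)=\Exp_x\bigl[e^{\int_0^{\uuptau_r\wedge\uptau_R\wedge T}V(X_s)\,\D s}\,u(X_{\uuptau_r\wedge\uptau_R\wedge T})\bigr]$ for $r<\abs{x}<R$ and $T>0$. Splitting this expectation according to which of $\uuptau_r$, $\uptau_R$, $T$ occurs first and discarding the latter two contributions (both $\ge0$ since $u\ge0$), one is left with $u(x)\ge\Exp_x\bigl[e^{\int_0^{\uuptau_r}V(X_s)\,\D s}\,u(X_{\uuptau_r})\,\Ind_{\{\uuptau_r<\uptau_R\wedge T\}}\bigr]$. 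On the event $\{\uuptau_r<\uptau_R\wedge T\}$ the trajectory stays in $B_r^c$ up to time $\uuptau_r$, so $V(X_s)\ge-\gamma\abs{X_s}^{2\alpha-2}$ there and $u(X_{\uuptau_r})\ge m_r$; hence $u(x)\ge m_r\,\Exp_x\bigl[e^{-\gamma\int_0^{\uuptau_r}\abs{X_s}^{2\alpha-2}\,\D s}\,\Ind_{\{\uuptau_r<\uptau_R\wedge T\}}\bigr]$. Then I would let $T\to\infty$ and afterwards $R\to\infty$; since the process is non-explosive under (A2), $\uptau_R\to\infty$ a.s., and monotone convergence gives
\[
u(x)\,\ge\, m_r\,\Exp_x\Bigl[e^{-\gamma\int_0^{\uuptau_r}\abs{X_s}^{2\alpha-2}\,\D s}\,\Ind_{\{\uuptau_r<\infty\}}\Bigr],\qquad \abs{x}\ge r\,.
\]

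Finally, on $\{\uuptau_r<\infty\}$ one has $\abs{X_{\uuptau_r}}=r$, so $\Lyap(X_{\uuptau_r})=e^{-Kr^\alpha}$, and \cref{EL4.1C} of \cref{L4.1} rearranges to $\Exp_x\bigl[e^{-\gamma\int_0^{\uuptau_r}\abs{X_s}^{2\alpha-2}\,\D s}\,\Ind_{\{\uuptau_r<\infty\}}\bigr]\ge e^{Kr^\alpha}\Lyap(x)$. Combining with the previous display gives $u(x)\ge m_r\,e^{Kr^\alpha}\,\Lyap(x)$ for $\abs{x}>r$. To remove the dependence on $u$, I would invoke Harnack's inequality: $u>0$ solves the uniformly elliptic equation $\Lg u+Vu=0$ with locally bounded coefficients on compact subsets of $\sK^c$, so $m_r\ge c_0\,u(x_0)=c_0$ for some $c_0>0$ depending only on $r$, $x_0$, and the coefficients, and one takes $C\df c_0\,e^{Kr^\alpha}$. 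I expect the only real care to be in choosing $r$ simultaneously large enough for all three requirements and in justifying the double limit $T\to\infty$, $R\to\infty$ via monotone convergence together with non-explosion; the substantive estimate is entirely contained in \cref{L4.1}.
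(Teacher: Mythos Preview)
Your proposal is correct and follows essentially the same approach as the paper: apply the It\^o--Krylov formula on the annulus, drop the nonnegative boundary and time-$T$ contributions, bound $V$ below by $-\gamma\abs{x}^{2\alpha-2}$, pass to the limit in $T$ and $R$, and then invoke \cref{EL4.1C} together with Harnack's inequality to obtain a constant independent of $u$. The only cosmetic differences are that the paper passes directly to $R\to\infty$ via Fatou's lemma (suppressing the $T$-limit you spell out) and inserts $\Lyap(X_{\uuptau_r})$ before taking the limit rather than after, but the substance is identical.
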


\begin{proof}
Let $r_0$ be as in \cref{L4.1}.
By the hypotheses of the theorem
we may choose $r>r_0$ and sufficiently large,
so that applying the It\^o--Krylov formula
to \cref{ET4.1A} we have
\begin{align}\label{ET4.1C}
u(x) &\,\ge\, \Exp_x\Bigl[e^{\int_0^{\uuptau_r} V(X_s)\, \D{s}}\,
u(X_{\uuptau_r})\Ind_{\{\uuptau_r<\uptau_R\}}\Bigr]\\[3pt]
&\mspace{200mu}
+ \Exp_x\Bigl[e^{ \int_0^{\uptau_R} V(X_s)\, \D{s}}\, u(X_{\uptau_R})
\Ind_{\{\uptau_R<\uuptau_r\}}\Bigr] \nonumber\\[5pt]
&\,\ge\, \Exp_x\Bigl[e^{-\gamma\int_0^{\uuptau_r} \abs{X_s}^{2\alpha-2}\, \D{s}}\,
u(X_{\uuptau_r})\Ind_{\{\uuptau_r<\uptau_R\}}\Bigr]
\quad \text{for\ } \abs{x}> r\,.\nonumber
\end{align}
By the Harnack inequality we have
$\min_{\abs{z}=r}\,u(z)\ge \kappa$ for some positive constant $\kappa$
which does not depend on $u$.
We let $R\to\infty$ in \cref{ET4.1C} and apply Fatou's lemma to obtain 
\begin{align*}
u(x) &\,\ge\, \Exp_x\Bigl[e^{-\gamma\int_0^{\uuptau_r} \abs{X_s}^{2\alpha-2}\, \D{s}}\,
 u(X_{\uuptau_r})\Ind_{\{\uuptau_r<\infty\}}\Bigr]
\\[3pt]
&\,\ge\, \biggl(\min_{\abs{z}=r}\,u(z)\biggr)\,
\E^{K r^\alpha}\,
\Exp_x\Bigl[e^{-\gamma\int_0^{\uuptau_r} \abs{X_s}^{2\alpha-2}\, \D{s}}\,
\Lyap(X_{\uuptau_r})\Ind_{\{\uuptau_r<\infty\}}\Bigr]
\\[3pt]
& \,\ge\, \kappa\,\E^{K r^\alpha}\, \Lyap(x)
\quad \text{for\ } \abs{x}> r\,,
\end{align*}
by \cref{EL4.1C}. Thus \cref{ET4.1B} follows.
\end{proof}

\begin{remark}
If $u\in\Sobl^{2,d}(\sK^c)$ is a nontrivial nonnegative
supersolution of \cref{ET4.1A} then it is necessarily positive on
$\sK^c$ by the strong maximum principle.
Thus \cref{ET4.1B} is valid for nonnegative
supersolutions; however the constant $C$ depends, in general, on $u$.
\end{remark}

As an immediate corollary to \cref{L4.1,T4.1} we have the following.

\begin{corollary}\label{C4.1}
Let $u\in \Sobl^{2, d}(\Rd)$ be a nontrivial nonnegative solution of
\begin{equation*}
\trace(a\,\grad^2u) + \langle b, \grad u\rangle + V u\,=\, 0
\quad \text{in\ } \sK^c\,.
\end{equation*}
Here, we assume that $\sup_{\sK^c}\abs{b(x)}\le M$, $\sup_{\sK^c}\abs{V(x)}\le \gamma$,
that $a$ is bounded, and
$\langle \xi, a(x)\xi\rangle\ge \eta_0 \abs{\xi}^2$ for all $\xi\in\Rd$.
Then for every $\varepsilon^\prime>0$ there
exist positive constants $C_{\varepsilon^\prime}$ and $R_{\varepsilon^\prime}$ such that
\begin{equation*}
u(x)\,\ge\, C_{\varepsilon^\prime} \, \exp\Biggl(-\biggl(\frac{M}{2\eta_0}+\sqrt{\frac{M^2}{4\eta_0^2}+ \frac{\gamma}{\eta_0}}
+\varepsilon^\prime\biggr)\abs{x}\Biggr)\,,
\quad \abs{x}\ge R_{\varepsilon^\prime}\,.
\end{equation*}
\end{corollary}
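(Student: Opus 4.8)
The plan is to read off \cref{C4.1} as the special case $\alpha=\beta=1$ of \cref{T4.1}. First I would note that the displayed operator in the statement is exactly $\Lg$ from \cref{E-Lg}, since $\trace(a\grad^2 u)+\langle b,\grad u\rangle = a^{ij}\partial_{ij}u+b^i\partial_i u$, so the hypothesis says $\Lg u+Vu=0$ in $\sK^c$ with $u\in\Sobl^{2,d}(\Rd)$ nontrivial and nonnegative. Next I would verify the hypotheses of \cref{L4.1}. From $\sup_{\sK^c}\abs{b}\le M$ and the Cauchy--Schwarz inequality we get $\babs{\langle b(x),x\rangle}\le M\abs{x}$ for $x\in\sK^c$, which is \cref{EL4.1A} with exponent $\beta=1$; the ellipticity bound $\langle\xi,a(x)\xi\rangle\ge\eta_0\abs{\xi}^2$ is assumed outright. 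Since $\beta=1$ is the best exponent available, we take $\alpha=1$ (the smallest admissible value, which moreover yields the slowest-decaying, hence strongest, Lyapunov bound). With $\alpha=1$ we have $2\alpha-2=0$, so $\Lyap(x)=\E^{-K\abs{x}}$ and the weight $\abs{X_s}^{2\alpha-2}$ appearing in \cref{EL4.1C} is identically $1$.

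Now fix $\varepsilon'>0$ and set
\begin{equation*}
K \,=\, K_{\varepsilon'} \,\df\, \frac{M}{2\eta_0}+\sqrt{\frac{M^2}{4\eta_0^2}+\frac{\gamma}{\eta_0}}+\varepsilon'\,.
\end{equation*}
Then $K\alpha=K$ satisfies the first inequality in \cref{EL4.1B} by construction, and the second condition in \cref{EL4.1B} holds because $a$ is bounded forces $\sum_i a^{ii}(x)=\trace(a(x))$ to be bounded, whence $\abs{x}^{-\alpha}\sum_i a^{ii}(x)=\abs{x}^{-1}\trace(a(x))\to 0$ as $\abs{x}\to\infty$. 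Finally, $\sup_{\sK^c}\abs{V}\le\gamma$ gives $V(x)\ge -\gamma=-\gamma\abs{x}^{2\alpha-2}$ on $\sK^c$, so the hypothesis on the potential in \cref{T4.1} is met with this choice of $\gamma$ and $\alpha$.

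With all the hypotheses of \cref{T4.1} in force, that theorem produces $r=R_{\varepsilon'}>0$ and a constant $C=C_{\varepsilon'}>0$ (independent of $u$ once $u$ is normalized at one point) such that $u(x)\ge C_{\varepsilon'}\Lyap(x)=C_{\varepsilon'}\E^{-K_{\varepsilon'}\abs{x}}$ for $\abs{x}>R_{\varepsilon'}$, which is precisely the claimed estimate; since $\varepsilon'>0$ was arbitrary this finishes the proof. I do not expect any genuine obstacle here: the analytic content is entirely contained in \cref{L4.1,T4.1}, and the proof of the corollary is merely the bookkeeping of specializing to $\alpha=\beta=1$. The only point needing a moment's care is recognizing that the constant $M$ bounding $\abs{b}$ plays, after Cauchy--Schwarz, exactly the role of the constant $M$ bounding $\langle b(x),x\rangle$ in \cref{L4.1} with $\beta=1$, and that the threshold $K\alpha>\frac{M}{2\eta_0}+\sqrt{\frac{M^2}{4\eta_0^2}+\frac{\gamma}{\eta_0}}$ then becomes the familiar Agmon-type exponent displayed in the statement.
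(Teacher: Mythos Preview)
Your proposal is correct and follows essentially the same route as the paper: specialize \cref{L4.1,T4.1} to $\alpha=1$ with $K=\frac{M}{2\eta_0}+\sqrt{\frac{M^2}{4\eta_0^2}+\frac{\gamma}{\eta_0}}+\varepsilon'$, and read off the conclusion. The paper's one-line proof in fact records $\beta=0$, but your choice $\beta=1$ is the one actually justified by $\abs{b}\le M$ via Cauchy--Schwarz (since that gives $\abs{\langle b(x),x\rangle}\le M\abs{x}$, not $\le M$); this is a cosmetic slip in the paper, and your version is the correct bookkeeping.
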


\begin{proof}
Let $K=\frac{M}{2\eta_0}+\sqrt{\frac{M^2}{4\eta_0^2}+ \frac{\gamma}{\eta_0}}+\varepsilon^\prime$,
$\alpha=1$, and $\beta=0$.
Then the result follows from \cref{T4.1}.
\end{proof}

Let us now discuss some important aspects of \cref{T4.1,C4.1}.
When $a=I$, $b=0$, and $V$ is the potential function for the
two body problem, a similar lower bound was obtained by Agmon \cite{Agmon-85}.
In the context of \cref{C4.1}, a lower bound
was also obtained by Kenig, Silvestre and Wang \cite[Theorem~1.5]{Kenig-Silvestre-Wang}
for solutions which can be sign-changing; however it is assumed
in \cite{Kenig-Silvestre-Wang} that $V\le 0$,
$a$ is the identity matrix, and $d=2$.
In contrast, \cref{C4.1} does not require these assumptions, but
applies only to nonnegative solutions $u$.
Note that the lower bound in
\cite[Theorem~1.5]{Kenig-Silvestre-Wang} is of the form $\E^{-C R(\log R)^2}$
in the radial direction $R$, whereas
the lower bound in \cref{C4.1} is of the form $\E^{-C R}$, and hence it is tighter.
When $b=0$, this bound is also sharper than the one
conjectured by Kenig in \cite[Question~1]{Kenig-05}.
In fact, this bound is optimal in some sense.
To see this, take $u(x)=e^{-\abs{x}}$ in $\Rd$.
Then $\Delta u + Vu =0$ in $\{\abs{x}> d\}$ where $V(x)=-1+\frac{d-1}{\abs{x}}$.
Since we can take $\varepsilon^\prime$ arbitrarily small, the bound in \cref{C4.1}
is very sharp.

We apply \cref{C4.1} to semi-linear or quasi-linear operators to find
a lower bound on the decay of solutions.

\begin{corollary}
Grant the hypotheses in \cref{L4.1}.
\begin{itemize}
\item[(a)]
Let $f\colon(0, \infty)\to(0, \infty)$ be a continuous function
such that
\begin{equation*}
\limsup_{s\searrow 0}\;\frac{1}{s} f(s)< +\infty\,,
\end{equation*}
and
 $u\in\Sobl^{2, d}(\Rd)$ be a bounded, positive solution of $\Lg u=f(u)$.
Then there exist constants $\gamma>0$ and $C_\gamma$, depending
on $\norm{u}_\infty$, such that
\begin{equation}\label{EC4.2A}
u(x)\,\ge\, C_\gamma e^{-\gamma\abs{x}}\quad \text{for all\ } \abs{x}\; \text{
sufficiently large}.
\end{equation}

\item[(b)]
Let $\Act_1, \Act_2$ be two compact metric spaces, and
$V, b\colon\Rd\times\Act_1\times\Act_2\to \Rd$ be two continuous functions
with $\norm{V}_\infty<\infty$, and $b(\cdot, v_1, v_2)$ satisfying
\textup{(}A2\textup{)} uniformly
in $(v_1, v_2)\in \Act_1\times\Act_2$.
If $u\in \Sobl^{2, d}(\Rd)$ is a positive solution of 
\begin{equation*}
\min_{v_1\in \Act_1}\;\max_{v_2\in\Act_2}\;\bigl[ a^{ij}\partial_{ij}u
+ b^i(x, v_1, v_2) \partial_i u + V(x, v_1, v_2) u\bigr]\,=\,0\,,
\end{equation*}
then it satisfies \cref{EC4.2A}.
\end{itemize}
\end{corollary}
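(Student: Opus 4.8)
The plan is to reduce both parts to \cref{C4.1} (equivalently, to \cref{T4.1} with $\alpha=1$), by recasting the equation satisfied by $u$ as a \emph{linear} equation with bounded potential, to which the decay estimates of \cref{Sec-4} then apply directly.

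For part (a), I would set $W(x)\df-\frac{f(u(x))}{u(x)}$; since $u>0$ everywhere, this is well defined and locally bounded, and $u$ solves $\Lg u + W u = 0$ in all of $\Rd$. The only point to verify is that $W$ is bounded from below. Because $0<u(x)\le\norm{u}_\infty$ for every $x$, it suffices to bound $s\mapsto\frac{f(s)}{s}$ on $(0,\norm{u}_\infty]$: by the $\limsup$ hypothesis there are $\delta>0$ and a constant $C_1$ with $\frac{f(s)}{s}\le C_1$ on $(0,\delta)$, while on the compact interval $[\delta,\norm{u}_\infty]$ the continuous function $f$ is bounded, say by $C_2$, whence $\frac{f(s)}{s}\le\frac{C_2}{\delta}$ there. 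Thus $0\le -W(x)\le\gamma$ on $\Rd$ with $\gamma\df\max\bigl\{C_1,\frac{C_2}{\delta}\bigr\}$ a constant depending only on $f$ and $\norm{u}_\infty$. In particular $W$ is bounded below, which is exactly the requirement $V(x)\ge-\gamma\abs{x}^{2\alpha-2}$ of \cref{T4.1} for $\alpha=1$; hence \cref{T4.1} (or \cref{C4.1}) gives $u(x)\ge C\Lyap(x)$ for all large $\abs{x}$, i.e.\ \cref{EC4.2A}.

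For part (b), the additional ingredient is a measurable selection in the Isaacs operator. Writing the left-hand side as $F\bigl(x,\grad u(x),u(x),\grad^2 u(x)\bigr)$, where $F(x,p,q,r)\df\min_{v_1\in\Act_1}\max_{v_2\in\Act_2}\bigl[\trace\bigl(a(x)r\bigr)+\langle b(x,v_1,v_2),p\rangle+V(x,v_1,v_2)\,q\bigr]$, one observes that the bracketed expression is continuous in $(v_1,v_2)$ and Borel in $x$ (the coefficients $a$, $b$, $V$ being continuous, and $\grad^2 u$, $\grad u$, $u$ measurable). Since $\Act_1$ and $\Act_2$ are compact metric spaces, a standard measurable selection theorem supplies Borel maps $v_1^*,v_2^*$ on $\Rd$ attaining the saddle value, so that, with $\Tilde b(x)\df b\bigl(x,v_1^*(x),v_2^*(x)\bigr)$ and $\Tilde V(x)\df V\bigl(x,v_1^*(x),v_2^*(x)\bigr)$, the function $u$ solves
\begin{equation*}
\trace\bigl(a(x)\grad^2 u(x)\bigr)+\langle\Tilde b(x),\grad u(x)\rangle+\Tilde V(x)\,u(x)\,=\,0\qquad\text{a.e.\ in\ }\Rd\,.
\end{equation*}
Since $\norm{\Tilde V}_\infty\le\norm{V}_\infty<\infty$ and $\Tilde b$ inherits the structural growth bound of the drifts uniformly in the controls, $u$ is a positive solution of a linear equation of the type covered by \cref{C4.1}, and invoking it again yields \cref{EC4.2A}.

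The only genuinely substantive step is the measurable selection in part (b): one needs $\Tilde b$ and $\Tilde V$ to be Borel so that the displayed linear equation — and hence the stochastic representation underlying \cref{T4.1,C4.1} — is meaningful. Everything else, in particular the elementary estimate on $\frac{f(s)}{s}$ in part (a), is routine and amounts to a direct appeal to the results of \cref{Sec-4}.
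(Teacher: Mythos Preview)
Your proposal is correct and follows essentially the same approach as the paper: in part~(a) you bound $f(u)/u$ on $(0,\norm{u}_\infty]$ exactly as the paper does (the paper merely asserts $f(u)\le Cu$ without writing out the two-interval argument), and in part~(b) you invoke measurable selection to linearize and then apply \cref{C4.1}, which is precisely the paper's route. Your write-up is in fact slightly more detailed than the paper's, which dispatches the corollary in a few lines.
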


\begin{proof}
For part (a), note that since $u$ is bounded, we have $f(u)\le C u$ for some constant
$C$ (depending on $\norm{u}_\infty$). Thus we obtain
\begin{equation*}
\Lg u \,\le\, C\, u\,,
\end{equation*}
and the proof follows from \cref{C4.1}.
For part (b), observe that we can find measurable selectors $v^*_i\colon\Rd\to \Act_i$
satisfying 
\begin{equation*}
a^{ij}\partial_{ij}u + b^i\bigl(x, v^*_1(x), v^*_2(x)\bigr) \partial_i u + 
V\bigl(x, v^*_1(x), v^*_2(x)\bigr) u\,=\,0\,.
\end{equation*}
The rest follows as before using \cref{C4.1}.
\end{proof}

In the rest of this section we discuss some connections of \cref{T4.1,C4.1}
with the Landis conjecture, and provide a partial answer to this conjecture.
In 1960s, E. M. Landis conjectured (see \cite{Kondratev-Landis})
that if $u$ is a solution to $\Delta u+ Vu=0$, with $\norm{V}_\infty\le q^2$,
and there exist positive constants $\varepsilon$ and $C_\varepsilon$
such that $\abs{u(x)}\le C_\varepsilon\, \E^{-(q+\varepsilon)\abs{x}}$,
then $u\equiv 0$. He also proposed a weaker 
version of this conjecture which states that if 
$\abs{u(x)}\le C_k \E^{-k\abs{x}}$
for any positive $k$, and some constant $C_k$, then $u\equiv0$. 
This conjecture was disproved by Meshkov in \cite{Meshkov} who constructed
a non-zero solution to $\Delta u + V u=0$ which satisfies
$\abs{u(x)}\le C\E^{-c\abs{x}^{\nicefrac{4}{3}}}$ for some
positive constants $c$ and $C$.
It is also shown in \cite{Meshkov} that if for any $k>0$, there exists a constant
$C_k$ satisfying $\abs{u(x)}\le C_k\E^{-k\abs{x}^{\nicefrac{4}{3}}}$,
then $u$ is identically $0$.
The counterexample by Meshkov has $V$ and $u$ complex valued.
Therefore the Landis conjecture remains open for real valued solutions and potentials.
It is interesting
to note that the Landis conjecture concerns the \emph{unique continuation property}
of $u$ at infinity. 
In practice, Carleman type estimates are commonly used to treat such problems,
but since a Carleman estimate 
does not distinguish between real and complex valued functions, it is hard to
improve the results of Meshkov using such estimates.
Landis' conjecture was recently revisited
by Kenig, Silvestre and Wang \cite{Kenig-Silvestre-Wang}, and
Davey, Kenig and Wang \cite{Davey-Kenig-Wang} for $V\le 0$ and $d=2$.
Note that if $V\le 0$ then Landis' conjecture follows
from the strong maximum principle.
The key contribution of \cites{Kenig-Silvestre-Wang, Davey-Kenig-Wang} is a lower
bound on the decay of $u$.
On the other hand, if we assume $u$ to be nonnegative, then the 
Landis conjecture follows from \cref{C4.1}.
In \cref{T4.2} below we show that Landis' conjecture holds under the assumption that
$\lamstr(V)\le 0$. It should be observed that $\lamstr(V)\le 0$
does not necessarily imply that $V\le 0$.

\begin{theorem}\label{T4.2}
Let $\Lg u+Vu=0$ and suppose that the following hold.
\begin{itemize}
\item[(i)] $a$ is bounded and uniformly elliptic with ellipticity constant $\eta_0$,
$\norm{b}_\infty\le M$, $\norm{V}_\infty\le\gamma$, and $\lamstr(V)\le 0$.
\item[(ii)]
For some positive constants $\varepsilon$ and $C_\varepsilon$, we have 
\begin{equation*}
\abs{u(x)}\,\le\, C_\varepsilon\, \exp\Biggl(-\biggl(\frac{M}{2\eta_0}+\sqrt{\frac{M^2}{4\eta_0^2}+ \frac{\gamma}{\eta_0}}
+\varepsilon \biggr)\abs{x}\Biggr)\,, \quad \forall\; x\in\Rd\,.
\end{equation*}
\end{itemize}
Then $u\equiv 0$.
\end{theorem}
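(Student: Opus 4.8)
The plan is to reduce \cref{T4.2} to the lower bound in \cref{C4.1} by comparing $u$ with a positive solution of $\Lg\Phi+V\Phi=0$ and then invoking the strong maximum principle for the quotient $v\df u/\Phi$. First I would exploit the hypothesis $\lamstr(V)\le 0$: by definition this means $-\Lg_V\ge0$ in $\Rd$, hence $\sC_{\Lg_V}(\Rd)\ne\varnothing$, so we may fix a positive $\Phi\in\Sobl^{2,d}(\Rd)$ with $\Lg\Phi+V\Phi=0$ a.e.\ in $\Rd$. Since $V\in\Lpl^\infty(\Rd)$, interior elliptic estimates upgrade $\Phi$ to $\Sobl^{2,p}(\Rd)$ for every $p$, so $\psi\df\log\Phi$ is locally bounded with locally bounded gradient.

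Next I would show that $v=u/\Phi$ vanishes at infinity. As $\Phi$ is a nontrivial nonnegative solution of $\trace(a\grad^2\Phi)+\langle b,\grad\Phi\rangle+V\Phi=0$ in $\Rd$ whose coefficients obey precisely the bounds $\norm{b}_\infty\le M$, $\norm{V}_\infty\le\gamma$, $\langle\xi,a\xi\rangle\ge\eta_0\abs{\xi}^2$ from~(i), \cref{C4.1} applies to $\Phi$: for every $\varepsilon'>0$ there exist $C_{\varepsilon'},R_{\varepsilon'}>0$ with
\[
\Phi(x)\,\ge\, C_{\varepsilon'}\exp\Biggl(-\biggl(\frac{M}{2\eta_0}
+\sqrt{\frac{M^2}{4\eta_0^2}+\frac{\gamma}{\eta_0}}+\varepsilon'\biggr)\abs{x}\Biggr),
\qquad\abs{x}\ge R_{\varepsilon'}\,.
\]
Taking $\varepsilon'\in(0,\varepsilon)$ and dividing hypothesis~(ii) by this estimate gives $\abs{v(x)}\le\tfrac{C_\varepsilon}{C_{\varepsilon'}}\,\E^{-(\varepsilon-\varepsilon')\abs{x}}$ for $\abs{x}\ge R_{\varepsilon'}$; combined with the local boundedness of the continuous function $v$, this shows $v$ is bounded on $\Rd$ and $v(x)\to0$ as $\abs{x}\to\infty$. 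This is the step where the sharp matching of the exponential rate in~(ii) with the rate produced by \cref{C4.1} is indispensable.

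I would then compute the equation satisfied by $v$. Setting $\Tilde\Lg f\df\Lg f+2\langle a\grad\psi,\grad f\rangle$ and using the product rule $\Lg(\Phi v)=v\,\Lg\Phi+\Phi\,\Tilde\Lg v$ together with $\Lg u=-Vu$ and $\Lg\Phi=-V\Phi$, one gets $\Tilde\Lg v=0$ a.e.\ in $\Rd$. Crucially $\Tilde\Lg$ is locally uniformly elliptic with locally bounded coefficients and has \emph{no zeroth-order term}. If $\sup_{\Rd}v>0$, then since $v\to0$ at infinity the supremum is attained at some $x_0\in\Rd$, and $w\df\sup_{\Rd}v-v$ is nonnegative, solves $\Tilde\Lg w=0$, and vanishes at $x_0$; the strong maximum principle forces $w\equiv0$, i.e.\ $v\equiv\sup_{\Rd}v$, contradicting $v\to0$. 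Hence $\sup_{\Rd}v\le0$, and the same argument applied to $-v$ yields $\inf_{\Rd}v\ge0$, so $v\equiv0$ and therefore $u\equiv0$.

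The conceptual work is concentrated in the first two steps — producing $\Phi$ from $\lamstr(V)\le0$ and deducing $u/\Phi\to0$ from the two exponential bounds — while the last step is a standard maximum-principle argument. The points demanding care are routine: checking that $\Phi$ is regular enough for $\grad\psi$ (hence the drift of $\Tilde\Lg$) to be locally bounded, and verifying that the constant \cref{C4.1} yields for $\Phi$ is exactly the one in~(ii), which holds because $\Phi$ and $u$ satisfy the same structural bounds. I do not foresee a genuine obstacle beyond this bookkeeping.
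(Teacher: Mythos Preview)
Your proposal is correct and follows essentially the same route as the paper: produce a positive solution $\Phi$ of $\Lg\Phi+V\Phi=0$ from $\lamstr(V)\le 0$, apply \cref{C4.1} to bound $\Phi$ from below, deduce that $u/\Phi\to 0$ at infinity, and conclude via the strong maximum principle for the twisted operator $\Tilde\Lg$ (which has no zeroth-order term). The only cosmetic difference is that you spell out the sup/inf argument for the maximum principle in more detail than the paper does.
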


\begin{proof}
Let $\Psi$ be a positive function in $\Sobl^{2, d}(\Rd)$ satisfying
\begin{equation}\label{ET4.2A}
\Lg\Psi + V\Psi \,=\, 0\,.
\end{equation}
Existence of such $\Psi$ follows, for example, from \cite[Theorem~1.4]{Berestycki-15}
and the fact that $\lamstr(V)\le 0$.
By $\Tilde\Lg$ we denote the twisted process corresponding to the eigenpair
$(\Psi, 0)$ i.e.,
\begin{equation*}
\Tilde\Lg f\,=\, \Lg f + 2\langle a\grad\psi, \grad f\rangle\,,
\quad f\in\cC^2(\Rd)\,,
\end{equation*}
where $\psi=\log\Psi$.
Let $\Phi=\frac{u}{\Psi}$. Then it is easy to check from \cref{ET4.2A} that
\begin{equation}\label{ET4.2B}
\Tilde\Lg \Phi =0\,.
\end{equation}
On the other hand, by \cref{C4.1} we have
\begin{equation*}
\Psi(x)\,\ge\, C_{\varepsilon^\prime}\, \exp\Biggl(-\biggl(\frac{M}{2\eta_0}+\sqrt{\frac{M^2}{4\eta_0^2}+ \frac{\gamma}{\eta_0}}
+\varepsilon^\prime \biggr)\abs{x}\Biggr) \quad \forall\;
\abs{x}>R_{\varepsilon^\prime}\,.
\end{equation*}
for $\varepsilon^\prime<\varepsilon$, and constants $C_{\varepsilon^\prime}$
and $R_{\varepsilon^\prime}$.
This of course, implies that $\Phi(x)\to 0$ as $\abs{x}\to\infty$.
Therefore, applying the strong maximum principle to \cref{ET4.2B},
we deduce that $\Phi\equiv 0$, which in turn implies that $u\equiv 0$.
This completes 
the proof.
\end{proof}

\begin{remark}
Recall that $W$ denotes the standard Brownian motion.
Let $V\in \cC_0(\Rd)$ be such that
\begin{equation}\label{ER4.1A}
\Exp_x\Bigl[\E^{\int_0^\infty \frac{1}{2}V^+(W_s)\, \D{s}}\Bigr]\;<\; \infty\,,
\quad \forall\; x\in\Rd\,.
\end{equation}
It is then known that $v(x)\df \Exp_x\Bigl[\E^{\int_0^\infty \frac{1}{2} V^+(W_s)\,
\D{s}}\Bigr]$ is a positive solution to
\begin{equation*}
\Delta v+ V^+ v\,=\,0\,.
\end{equation*}
This of course implies that $\Delta v+ V v\le 0$, and therefore,
$\lamstr(V)\le 0$. For $d\ge 3$, if we have 
\begin{equation*}
\sup_{x\in\Rd} \frac{2}{(d-2)\omega_d}
\int_{\Rd}\frac{V^+(y)}{\abs{x-y}^{d-2}}\;<\; 1\,,
\end{equation*}
with $\omega_d$ denoting the surface measure of the unit sphere in $\Rd$,
then $V$ also satisfies \cref{ER4.1A} by Khasminskii's lemma
\cite[Lemma~B.1.2]{Barry-82}.

Recall that, as shown in \cref{R2.5}, if
$V\in\cC_0(\Rd)$ and $\lamstr$ is not strictly monotone at $V$,
then $\lamstr(V)\le 0$.
This allows us to apply \cref{T4.2}, to conclude that Landis' conjecture holds if
$\lamstr(\Lg, V)$ is not strictly monotone at $V\in\cC_0(\Rd)$.
This of course applies to the general class of operators
$\Lg$ satisfying (A1)--(A3).
\end{remark}

Though we have not been able to prove the Landis conjecture in its full generality,
we can validate this conjecture for a large class of potentials, including
compactly supported potentials.
This can be done with the help of the
Radon transformation and a support theorem from Helgason \cite{Helgason}.
See also \cite{Meshkov-14} which uses a similar approach, albeit for
homogeneous elliptic equations.

\begin{theorem}\label{T4.3}
Suppose $\Delta u + \langle b, \nabla u\rangle +Vu=0$ in $\sB^c$, where $\sB$
is a bounded ball,
and the following hold.
\begin{itemize}
\item[(i)] $\norm{b}_\infty\le M, \, \norm{V}_\infty\le \gamma$.
There exists a ball $B_r$, $r>0$, with $\sB\subset B_r$,
such that $b$ and $V$ are constant in $B^c_r$.
\item[(ii)] For some positive $\varepsilon, C_\varepsilon$, we have 
\begin{equation*}
\abs{u(x)}\,\le\, C_\varepsilon\, \exp\Biggl(-\biggl( \frac{M}{2}+\sqrt{\frac{M^2}{4}+ \gamma}+\varepsilon \biggr)\abs{x}\Biggr)\,, \quad \forall\; x\in\sB^c\,.
\end{equation*}
\end{itemize}
Then $u\equiv 0$.
\end{theorem}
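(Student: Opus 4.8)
The plan is to exploit the fact that all coefficients are constant on $B_r^c$, reduce there to a constant-coefficient Helmholtz equation, and then apply the support theorem for the Radon transform. Write $b_0$ and $V_0$ for the constant values of $b$ and $V$ on $B_r^c$, set $\mu\df V_0-\tfrac14\abs{b_0}^2$, and define $v(x)\df\E^{\langle b_0,x\rangle/2}u(x)$ for $x\in B_r^c$. Since $x\mapsto\langle b_0,x\rangle$ is harmonic, a direct computation turns the equation $\Delta u+\langle b,\grad u\rangle+Vu=0$ into $\Delta v+\mu v=0$ in $B_r^c$; as this is a constant-coefficient elliptic equation, $v$ is real-analytic there, and the elementary bounds $V_0\in[-\gamma,\gamma]$ and $\tfrac14\abs{b_0}^2\in[0,\tfrac14 M^2]$ give $\abs{\mu}\le\tfrac14 M^2+\gamma$. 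From hypothesis~(ii) and $\abs{b_0}\le M$ one gets $\abs{v(x)}\le C_\varepsilon\E^{-(\sqrt{M^2/4+\gamma}+\varepsilon)\abs{x}}$ on $B_r^c$; thus $v$ decays exponentially at a rate strictly larger than $\sqrt{\abs{\mu}}$, and by interior elliptic estimates $v$ together with all its derivatives decays exponentially on $\{\abs{x}\ge r+1\}$, so $v$ is rapidly decreasing on the exterior of $\Bar{B}_r$.

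Next I would pass to the Radon transform. For a unit vector $\omega$ and $\abs{p}>r$ the hyperplane $\{x\colon\langle x,\omega\rangle=p\}$ lies inside $B_r^c$, so
\begin{equation*}
g(\omega,p)\;\df\;\int_{\omega^\perp}v(p\omega+y)\,\D{y}
\end{equation*}
is finite and, by the decay of $v$ and $\grad v$, smooth in $p$ for $\abs{p}>r$, and moreover $\abs{g(\omega,p)}\le C\,\E^{-\theta\abs{p}}$ for large $\abs{p}$ with some $\theta>\sqrt{\abs{\mu}}$. Decomposing $\Delta=\partial_\omega^2+\Delta_{\omega^\perp}$, differentiating twice under the integral sign, and observing that $\int_{\omega^\perp}\Delta_{\omega^\perp}v(p\omega+y)\,\D{y}=0$ by the divergence theorem on $\omega^\perp\cong\RR^{d-1}$ (the boundary terms vanishing because $\grad v$ decays exponentially), the equation $\Delta v+\mu v=0$ gives
\begin{equation*}
\partial_p^2 g(\omega,p)+\mu\,g(\omega,p)\,=\,0\qquad\text{for }\abs{p}>r\,.
\end{equation*}
The general solution of this ODE is a linear combination of $\E^{\pm\sqrt{-\mu}\,p}$, and the real parts of $\pm\sqrt{-\mu}$ have modulus $\sqrt{\max\{-\mu,0\}}\le\sqrt{\abs{\mu}}$; since $g(\omega,\cdot)$ decays exponentially at rate $\theta>\sqrt{\abs{\mu}}$ on each of $(r,\infty)$ and $(-\infty,-r)$, this forces $g(\omega,p)=0$ for all unit $\omega$ and all $\abs{p}>r$. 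This is precisely where the constant in hypothesis~(ii) enters: it guarantees that $v$ decays faster than $\E^{-\sqrt{\abs{\mu}}\abs{x}}$, which is exactly what rules out a nonzero bounded solution of the transformed equation.

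It remains to invoke the support theorem. The rapidly decreasing function $v$ on $B_r^c$ has vanishing Radon transform over every hyperplane that does not meet $\Bar{B}_r$; hence by Helgason's support theorem \cite{Helgason} (cf.\ the analogous use of it in \cite{Meshkov-14}) we conclude $v\equiv0$ on $\{\abs{x}>r\}$, and therefore $u=\E^{-\langle b_0,x\rangle/2}v\equiv0$ on $B_r^c$. Finally, since $\sB^c$ is connected (as $d\ge2$) and $u$ vanishes on the nonempty open subset $B_r^c$, the unique continuation property for $\Delta u+\langle b,\grad u\rangle+Vu=0$ with bounded $b,V$ yields $u\equiv0$ on $\sB^c$, which is the assertion.

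The main obstacle is the rigorous justification of the Radon step: showing that $g(\omega,\cdot)$ is $C^2$ on $\{\abs{p}>r\}$, differentiating under the integral sign, and proving that the tangential Laplacian integrates to zero, all of which require exponential decay not only of $v$ but of its first and second derivatives — supplied by interior estimates for the constant-coefficient elliptic equation $\Delta v+\mu v=0$. A secondary technical point is that the usual statement of Helgason's support theorem concerns rapidly decreasing functions on all of $\Rd$, whereas here $v$ lives only on $B_r^c$; since both the hypothesis and the conclusion of the theorem involve only hyperplanes avoiding, and points outside, $\Bar{B}_r$, its proof carries over unchanged, but this exterior version should be stated explicitly. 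Everything else amounts to routine computation.
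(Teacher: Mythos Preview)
Your argument is correct and follows the same overall strategy as the paper: apply the Radon transform on $B_r^c$, reduce to a constant-coefficient second-order ODE along each direction, use the decay hypothesis to kill the ODE solution, invoke Helgason's support theorem to get $u\equiv 0$ on $B_r^c$, and finish with unique continuation.

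The one genuine difference is your preliminary gauge transformation $v=\E^{\langle b_0,x\rangle/2}u$, which converts the drift equation into the pure Helmholtz equation $\Delta v+\mu v=0$ before taking the Radon transform. The paper instead applies the Radon transform directly to $u$: after rotating so that the hyperplane is $\{x_1=s\}$, it obtains the ODE $w''+b_0^1 w'+kw=0$ with roots $-\tfrac{b_0^1}{2}\pm\tfrac12\sqrt{(b_0^1)^2-4k}$, and then checks by hand that these roots have modulus at most $\tfrac{M}{2}+\sqrt{\tfrac{M^2}{4}+\gamma}$. Your route is cleaner, since the gauge eliminates the first-order term once and for all and the resulting root analysis ($\lambda^2+\mu=0$, $\abs{\mu}\le\tfrac{M^2}{4}+\gamma$) is immediate; the price is the extra bookkeeping that the exponential weight eats exactly the $\tfrac{M}{2}$ in the decay exponent, which you handle correctly. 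The paper's route avoids introducing $v$ but carries the drift through the ODE. Neither approach is more general; they are equivalent reorganizations of the same computation, and both rely on exactly the same two external ingredients (Helgason's support theorem and H\"ormander's unique continuation). The technical points you flag---decay of derivatives for the Radon step, and the exterior version of the support theorem---are handled in the paper in the same somewhat informal way you indicate.
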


\begin{proof}
Without loss of generality we may assume $0\in\sB$, and
$b(x)=b_0$, $V(x)=k$ for all $x\in B^c_r$.
Also by standard regularity theory of elliptic PDE we may assume that $u$ is
smooth in $\sB^c_1$. Let $(\omega, p)\in S^d\times \RR$
where $S^d$ is the $(d-1)$-dimensional unit sphere in $\Rd$.
We note that any hyperplane $\Rd$ can be identified by $(\omega, p)$ up to the
equality $(-\omega, -p)=(\omega, p)$.
Let $\xi$ be a hyperplane in $\Rd$, i.e., for some $(\omega, p)$ we
have $\xi=\{x\in\Rd\colon \langle x, \omega\rangle = p\}$.
The Radon transformation of $u$ is defined as
\begin{equation*}
\Breve{u}(\xi) \,\df\, \int_{\xi} u(y)\, S(\D{y})\,, \quad \text{where $S(\D{y})$
is the surface measure on}\; \xi\,.
\end{equation*}
We claim that if the hyperplane does not intersect $\sB_1$, then we have 
\begin{equation}\label{ET4.3A}
\Breve{u}(\xi)\,=\, 0\,.
\end{equation}
If \cref{ET4.3A} is true, then since $u$ decays exponentially fast to $0$ at infinity
by (ii), then the support theorem
\cite[Theorem~1.2.6 and Corollary~1.2.8]{Helgason} implies that $u\equiv 0$ in $B^c_r$.
This in turn, implies that $u\equiv 0$ by the unique continuation property of
H\"{o}rmander \cite[Theorem~2.4]{Hormander}.

In order to complete the proof we need to prove \cref{ET4.3A}.
Note that if we define $v(x)=u(\mathscr{M}x)$, where $\mathscr{M}$ is any
rotation matrix, then
\begin{equation*}
\Delta v + \langle \mathscr{M}^{\transp} b_0, \grad v\rangle + kv\,=\, 0\,,
\quad x\in \sB^c_1\,.
\end{equation*}
Also a rotation does not change the norm of $b$.
Therefore, without loss of generality, we may assume that
$\xi=\xi(\kappa_0)\df \{x\in\Rd\; \colon\; x_1=\kappa_0, \, \kappa_0>0\}$.
Define for $s\ge \kappa_0$,
\begin{equation*}
w(s) \,\df\, \int_{\xi(s)} u(y) \, S(\D{y}) = \int_{\RR^{d-1}} u(s, \Bar{x})\,
\D{\Bar{x}}, \quad \text{where}\; \Bar{x}=(x_2, \ldots, x_d)\in\RR^{d-1}\,.
\end{equation*}
Note that $w$ is smooth in $[\kappa_0, \infty)$ due to the smoothness of $u$
and its decay at infinity.
Moreover,
\begin{align}\label{ET4.3B}
\frac{\D^2{w}(s)}{\D{s^2}} &\,=\, \int_{\RR^{d-1}} \frac{\D^2{u}}{\D{s^2}}(s, \Bar{x})\,
\D{\Bar{x}} \\[3pt]
&\,=\, - \int_{\RR^{d-1}} \sum_{i=2}^d \partial_{ii}u(s, \Bar{x})\, \D{\Bar{x}}
- \int_{\RR^{d-1}} \sum_{i=1}^d b^i_0 \partial_{i}u(s, \Bar{x})\, \D{\Bar{x}}
\nonumber \\
&\mspace{300mu}- k\int_{\RR^{d-1}} u(s, \Bar{x})\, \D{\Bar{x}}\nonumber \\[3pt]
&\,=\, - \int_{\RR^{d-1}} b^1_0 \partial_{1}u(s, \Bar{x})\, \D{\Bar{x}}
- k\int_{\RR^{d-1}} u(s, \Bar{x})\, \D{\Bar{x}}\nonumber \\[5pt]
&\,=\, -b^1_0 \frac{\D{w}(s)}{\D{s}} - kw(s)\,,\nonumber
\end{align}
where in the second equality we use the equation satisfied by $u$,
and in the third equality we use the fundamental theorem of calculus.
Thus we obtain from 
\cref{ET4.3B} a second-order ODE with constant coefficients, given by
\begin{equation}\label{ET4.3C}
\frac{\D^2{w}(s)}{\D{s^2}} + b^1_0 \frac{\D{w}(s)}{\D{s}} + kw(s)\,=\, 0
\quad \text{in\ } [\kappa_0, \infty)\,.
\end{equation}
We solve this ODE explicitly, and using the decay property of $u$ in (ii)
we show that $w(s)=0$ in $[\kappa_0, \infty)$.
In particular, $w(\kappa_0)=0$ which proves \cref{ET4.3A}.
Denote by $\kappa_1=\frac{M}{2}+\sqrt{\frac{M^2}{4}+ \gamma}$. We first show that for $\varepsilon^\prime<\varepsilon$
there exists a positive constant $C_{\varepsilon^\prime}$ such that
\begin{equation}\label{ET4.3D}
\abs{w(s)}\;\le \; C_{\varepsilon^\prime} \, \E^{-(\kappa_1 + \varepsilon^\prime)s}\,,
\quad \text{for}\; s\in[\kappa_0, \infty)\,.
\end{equation}
By (ii) and a choice of $s_0$, satisfying $\sqrt{s_0}(s_0-1)>2$, we get
\begin{align*}
\abs{w(s)}&\;\le \; \int_{\RR^{d-1}} \abs{u(s, \Bar{x})}\, \D{\Bar{x}}
\\[5pt]
&\le \; C_\varepsilon \int_{\RR^{d-1}} \E^{-\kappa_\varepsilon\abs{x} }\, \D{\Bar{x}}
\quad [\mbox{for}\; \kappa_\varepsilon=\kappa_1+\varepsilon]
\\[5pt]
&= \; C_\varepsilon s^{d-1} \int_{\RR^{d-1}} \E^{-\kappa_\varepsilon s
\sqrt{1 + \abs{\Bar{x}}^2} }\, \D{\Bar{x}}
\\[5pt]
&= \; \kappa_2 s^{d-1} \int_0^\infty \E^{-\kappa_\varepsilon s
\sqrt{1 + r^2} }\, r^{d-2} \D{r}
\\[5pt]
&= \; \kappa_2 s^{d-1} \biggl[\int_0^{s_0} \E^{-\kappa_\varepsilon s
\sqrt{1 + r^2} }\, r^{d-2} \D{r} 
+ \int_{s_0}^\infty \E^{-\kappa_\varepsilon s \sqrt{1 + r^2} }\, r^{d-2} \D{r} \biggr]
\\[5pt]
&\le \; \kappa_2 s^{d-1} \biggl[s_0^{d-1} \E^{-\kappa_\varepsilon s}
+ \int_{s_0}^\infty \E^{-\kappa_\varepsilon s (1 + \sqrt{r}) }\, r^{d-2} \D{r} \biggr]
\\[5pt]
&\le \; \kappa_3 s^{d-1} \E^{-\kappa_\varepsilon s}\biggl[ 1+ \int_{s_0}^\infty
\E^{-\kappa_\varepsilon \kappa_0 \sqrt{r} }\, r^{d-2} \D{r}\biggr]\,,
\end{align*}
where in the third inequality we have used the fact that
$\sqrt{1+r^2}> 1+\sqrt{r}$ for $r\ge s_0$.
\Cref{ET4.3D} easily follows from the above estimate.
To solve \cref{ET4.3C} we find the roots of the characteristic polynomial of
the ODE which are
given by
\begin{equation*}
r_1=-\frac{1}{2}b^1_0 + \frac{1}{2}\sqrt{(b^1_0)^2 -4 k}\,, \quad \text{and}
\quad r_2=-\frac{1}{2}b^1_0 - \frac{1}{2}\sqrt{(b^1_0)^2 -4 k}\,.
\end{equation*}
The solution of \cref{ET4.3C} can be written as
\begin{equation*}
w(s)\,=\, c_1 \E^{r_1 s} + c_2 \E^{r_2 s}\,,
\end{equation*}
where the constants $c_1$ and $c_2$ are uniquely determined.
Now if $(b^1_0)^2 -4 k<0$, then the roots are complex and the decay of $w$
is of order $\E^{-\frac{1}{2}b^1_0 s}$ which is larger than 
the RHS of \cref{ET4.3D}.
Therefore, we must have $c_1=c_2=0$.
On the other hand, if $(b^1_0)^2 -4 k\ge 0$, and since
$\babs{-\frac{1}{2}b^1_0 \pm \frac{1}{2}\sqrt{(b^1_0)^2 -4 k}}\le \kappa_1$,
we conclude that $c_1=c_2=0$.
Hence we must have $w(s)=0$ in $[\kappa_0, \infty)$.
This completes the proof.
\end{proof}

As a concluding remark, we show that the Landis conjecture is true for
solutions that satisfy a reverse Poincar\'e inequality.
Specifically, consider an operator $\Lg +V$ with bounded coefficients and $a$
the identity matrix.
We say a solution $u$ to $\Lg u + Vu=0$ satisfies (G) if the following holds:
\begin{itemize}
\item[\bf(G)] There exist positive constants
 $r$ and $C$, independent of $x$, such that 
\begin{equation*}
\int_{B_r(x)}\abs{\grad u(y)}^2\,\D{y}
\,\le\, C \int_{B_r(x)}\abs{u(y)}^2\,\D{y},
\quad \forall\;\abs{x}\gg 1\,.
\end{equation*}
\end{itemize}

\begin{remark}
Note that if $\int_{B_r(x)}\abs{\grad u(y)}^2\,\D{y}=0$ for some $x\in\Rd$, then it is
shown by H\"{o}rmander \cite[Theorem~2.4]{Hormander} that $u\equiv 0$.
\end{remark}

The following (weaker) Landis' conjecture is true for solutions satisfying (G). 
Suppose that $\Delta u + \langle b, \nabla u\rangle +Vu=0$ in $\sB^c$,
with $\sB$ a bounded ball, $u\in \cC^2(\sB^c)$, 
and the following hold.
\begin{itemize}
\item $\norm{b}_\infty\le M, \, \norm{V}_\infty\le q^2$, and $u$ satisfies (G).
\end{itemize}
Then for $\kappa=\bigl[2(M\sqrt{C} + q^2 + C)\bigr]^{\nicefrac{1}{2}}+1$, we have
\begin{equation}\label{ET4.4A}
\int_{B_r(x)} u^2(y)\, \D{y}\;\ge \; C_{\kappa} \exp(-\kappa\, \abs{x})
\quad \text{for all}\; \abs{x}\gg 1\,.
\end{equation}
In particular, if for every $k\in\NN$, we have $\abs{u(x)}\le C_k \E^{-k\abs{x}}$
for some constant $C_k$, then $u\equiv 0$.

In order to prove this claim, we define
\begin{equation*}
v(x)\,\df\,\int_{B_r(x)} u^2(y)\, \D{y}\,.
\end{equation*}
Since $u\in\cC^2(\sB^c)$, we have $v\in\cC^2(\sB^c_1)$ for some large ball
$\sB_1\Supset \sB$.
A straightforward calculation shows that
\begin{align*}
\Delta v(x) &\,=\, 2\int_{B_r(x)} u\Delta u\, \D{y} + 2\int_{B_r(x)}
\abs{\grad u}^2\,\D{y}
\\[5pt]
&\,=\, -2\int_{B_r(x)} u \langle b, \grad{u}\rangle \, \D{y}
-2\int_{B_r(x)} V u^2 \, \D{y} + 2\int_{B_r(x)} \abs{\grad u}^2\,\D{y}
\\[5pt]
&\,\le\, 2M \biggl(\int_{B_r(x)} \abs{u}^2\, \D{y} \biggr)^{\nicefrac{1}{2}}
\biggl(\int_{B_r(x)} \abs{\grad{u}}^2\, \D{y} \biggr)^{\nicefrac{1}{2}}
 + 2q^2 \int_{B_r(x)}u^2 \, \D{y}\\
&\mspace{360mu}+ 2\int_{B_r(x)} \abs{\grad u}^2\,\D{y}
\\[5pt]
&\;\le \; (2M\sqrt{C} + 2q^2 + 2C) v(x)\,.
\end{align*}
Therefore \eqref{ET4.4A} follows from \cref{C4.1}. 

\section*{Acknowledgements}
We wish to convey our sincere thanks to Yehuda Pinchover for his helpful
comments and remarks throughout this project.
The research of Ari Arapostathis was supported in part by the Army Research Office
through grant W911NF-17-1-001, in part by the National Science Foundation through grant
DMS-1715210, and in part by Office of Naval Research through grant N00014-16-1-2956.
The research of Anup Biswas was supported in part by an INSPIRE faculty
fellowship (IFA13/MA-32) and DST-SERB grant EMR/2016/004810.
Debdip Ganguly was supported in part at the Technion by a fellowship of the Israel
Council for Higher Education and the Israel Science Foundation (Grant No. 970/15)
founded by the Israel Academy of Sciences and Humanities.


\begin{bibdiv}
\begin{biblist}

\bib{Agmon-83}{incollection}{
      author={Agmon, S.},
       title={On positivity and decay of solutions of second order elliptic
  equations on {R}iemannian manifolds},
        date={1983},
   booktitle={Methods of functional analysis and theory of elliptic equations
  ({N}aples, 1982)},
   publisher={Liguori, Naples},
       pages={19\ndash 52},
      review={\MR{819005}},
}

\bib{Agmon-85}{incollection}{
      author={Agmon, S.},
       title={Bounds on exponential decay of eigenfunctions of {S}chr\"odinger
  operators},
        date={1985},
   booktitle={Schr\"odinger operators ({C}omo, 1984)},
      series={Lecture Notes in Math.},
      volume={1159},
   publisher={Springer, Berlin},
       pages={1\ndash 38},
         url={http://dx.doi.org/10.1007/BFb0080331},
      review={\MR{824986}},
}

\bib{Ancona-97}{article}{
      author={Ancona, A.},
       title={First eigenvalues and comparison of {G}reen's functions for
  elliptic operators on manifolds or domains},
        date={1997},
     journal={J. Anal. Math.},
      volume={72},
       pages={45\ndash 92},
         url={http://dx.doi.org/10.1007/BF02843153},
      review={\MR{1482989}},
}

\bib{ari-anup}{article}{
      author={{Arapostathis}, A.},
      author={{Biswas}, A.},
       title={Infinite horizon risk-sensitive control of diffusions without any
  blanket stability assumptions},
        date={2018},
     journal={Stochastic Processes and their Applications},
      volume={128},
      number={5},
       pages={1485\ndash 1524},
      review={\MR{3780687}},
}

\bib{ABS}{article}{
      author={Arapostathis, A.},
      author={Biswas, A.},
      author={Saha, S.},
       title={Strict monotonicity of principal eigenvalues of elliptic
  operators in $\mathbb{R}^d$ and risk-sensitive control},
        date={2018},
     journal={J. Math. Pure. Appl.},
      eprint={https://arxiv.org/abs/1704.02571},
      status={to appear},
}

\bib{book}{book}{
      author={Arapostathis, A.},
      author={Borkar, V.~S.},
      author={Ghosh, M.~K.},
       title={Ergodic control of diffusion processes},
      series={Encyclopedia of Mathematics and its Applications},
   publisher={Cambridge University Press},
     address={Cambridge},
        date={2012},
      volume={143},
        ISBN={978-0-521-76840-5},
      review={\MR{2884272}},
}

\bib{AA-Harnack}{article}{
      author={Arapostathis, A.},
      author={Ghosh, M.~K.},
      author={Marcus, S.~I.},
       title={Harnack's inequality for cooperative weakly coupled elliptic
  systems},
        date={1999},
     journal={Comm. Partial Differential Equations},
      volume={24},
      number={9-10},
       pages={1555\ndash 1571},
      review={\MR{1708101}},
}

\bib{Meshkov-14}{article}{
      author={Astakhov, A.~T.},
      author={Meshkov, V.~Z.},
       title={On the possible decay at infinity of solutions of homogeneous
  elliptic equations},
        date={2014},
        ISSN={0012-2661},
     journal={Differ. Equ.},
      volume={50},
      number={11},
       pages={1548\ndash 1550},
         url={http://dx.doi.org/10.1134/S0012266114110123},
        note={Translation of Differ. Uravn. {{\bf{5}}0} (2014), no. 11,
  1548--1550},
      review={\MR{3369163}},
}

\bib{Bardi-16}{article}{
      author={Bardi, M.},
      author={Cesaroni, A.},
       title={Liouville properties and critical value of fully nonlinear
  elliptic operators},
        date={2016},
     journal={J. Differential Equations},
      volume={261},
      number={7},
       pages={3775\ndash 3799},
      review={\MR{3532054}},
}

\bib{Barlow-98}{article}{
      author={Barlow, M.~T.},
       title={On the {L}iouville property for divergence form operators},
        date={1998},
        ISSN={0008-414X},
     journal={Canad. J. Math.},
      volume={50},
      number={3},
       pages={487\ndash 496},
         url={http://dx.doi.org/10.4153/CJM-1998-026-9},
      review={\MR{1629807}},
}

\bib{Beckus-17}{article}{
      author={{Beckus}, S.},
      author={{Pinchover}, Y},
       title={{S}hnol-type theorem for the {A}gmon ground state},
        date={2017},
     journal={ArXiv e-prints},
      volume={1706.04869},
      eprint={https://arxiv.org/abs/1706.04869},
}

\bib{BCN}{article}{
      author={Berestycki, H.},
      author={Caffarelli, L.},
      author={Nirenberg, L.},
       title={Further qualitative properties for elliptic equations in
  unbounded domains},
        date={1997},
        ISSN={0391-173X},
     journal={Ann. Scuola Norm. Sup. Pisa Cl. Sci. (4)},
      volume={25},
      number={1-2},
       pages={69\ndash 94 (1998)},
         url={http://www.numdam.org/item?id=ASNSP_1997_4_25_1-2_69_0},
        note={Dedicated to Ennio De Giorgi},
      review={\MR{1655510}},
}

\bib{Berestycki-94}{article}{
      author={Berestycki, H.},
      author={Nirenberg, L.},
      author={Varadhan, S. R.~S.},
       title={The principal eigenvalue and maximum principle for second-order
  elliptic operators in general domains},
        date={1994},
        ISSN={0010-3640},
     journal={Comm. Pure Appl. Math.},
      volume={47},
      number={1},
       pages={47\ndash 92},
      review={\MR{1258192}},
}

\bib{Berestycki-15}{article}{
      author={Berestycki, H.},
      author={Rossi, L.},
       title={Generalizations and properties of the principal eigenvalue of
  elliptic operators in unbounded domains},
        date={2015},
        ISSN={0010-3640},
     journal={Comm. Pure Appl. Math.},
      volume={68},
      number={6},
       pages={1014\ndash 1065},
      review={\MR{3340379}},
}

\bib{Bogachev-01}{article}{
      author={Bogachev, V.~I.},
      author={Krylov, N.~V.},
      author={R{\"o}ckner, M.},
       title={On regularity of transition probabilities and invariant measures
  of singular diffusions under minimal conditions},
        date={2001},
        ISSN={0360-5302},
     journal={Comm. Partial Differential Equations},
      volume={26},
      number={11-12},
       pages={2037\ndash 2080},
      review={\MR{1876411}},
}

\bib{Carmona-78}{article}{
      author={Carmona, R.},
       title={Pointwise bounds for {S}chr\"odinger eigenstates},
        date={1978},
        ISSN={0010-3616},
     journal={Comm. Math. Phys.},
      volume={62},
      number={2},
       pages={97\ndash 106},
         url={http://projecteuclid.org/euclid.cmp/1103904348},
      review={\MR{505706}},
}

\bib{Carmona-Simon}{article}{
      author={Carmona, R.},
      author={Simon, B.},
       title={Pointwise bounds on eigenfunctions and wave packets in {$N$}-body
  quantum systems. {V}. {L}ower bounds and path integrals},
        date={1981},
        ISSN={0010-3616},
     journal={Comm. Math. Phys.},
      volume={80},
      number={1},
       pages={59\ndash 98},
         url={http://projecteuclid.org/euclid.cmp/1103919806},
      review={\MR{623152}},
}

\bib{Davey-Kenig-Wang}{article}{
      author={Davey, B.},
      author={Kenig, C.},
      author={Wang, J.-N.},
       title={The {L}andis conjecture for variable coefficient second-order
  elliptic {PDE}s},
        date={2017},
        ISSN={0002-9947},
     journal={Trans. Amer. Math. Soc.},
      volume={369},
      number={11},
       pages={8209\ndash 8237},
      review={\MR{3695859}},
}

\bib{BMY}{article}{
      author={Devyver, B.},
      author={Fraas, M.},
      author={Pinchover, Y.},
       title={Optimal {H}ardy weight for second-order elliptic operator: an
  answer to a problem of {A}gmon},
        date={2014},
        ISSN={0022-1236},
     journal={J. Funct. Anal.},
      volume={266},
      number={7},
       pages={4422\ndash 4489},
         url={http://dx.doi.org/10.1016/j.jfa.2014.01.017},
      review={\MR{3170212}},
}

\bib{Gan-Pinch-16}{article}{
      author={Ganguly, D.},
      author={Pinchover, Y.},
       title={On {G}reen functions of second-order elliptic operators on
  {R}iemannian manifolds: the critical case},
        date={2018},
     journal={J. Funct. Anal.},
      volume={274},
      number={9},
       pages={2700\ndash 2724},
      review={\MR{3771841}},
}

\bib{GG-98}{article}{
      author={Ghoussoub, N.},
      author={Gui, C.},
       title={On a conjecture of {D}e {G}iorgi and some related problems},
        date={1998},
        ISSN={0025-5831},
     journal={Math. Ann.},
      volume={311},
      number={3},
       pages={481\ndash 491},
         url={http://dx.doi.org/10.1007/s002080050196},
      review={\MR{1637919}},
}

\bib{Gyongy-96}{article}{
      author={Gy{\"o}ngy, I.},
      author={Krylov, N.},
       title={Existence of strong solutions for {I}t\^o's stochastic equations
  via approximations},
        date={1996},
        ISSN={0178-8051},
     journal={Probab. Theory Related Fields},
      volume={105},
      number={2},
       pages={143\ndash 158},
      review={\MR{1392450}},
}

\bib{Helgason}{book}{
      author={Helgason, S.},
       title={The {R}adon transform},
     edition={Second},
      series={Progress in Mathematics},
   publisher={Birkh\"auser Boston, Inc., Boston, MA},
        date={1999},
      volume={5},
        ISBN={0-8176-4109-2},
         url={http://dx.doi.org/10.1007/978-1-4757-1463-0},
      review={\MR{1723736}},
}

\bib{Hormander}{article}{
      author={H\"ormander, L.},
       title={Uniqueness theorems for second order elliptic differential
  equations},
        date={1983},
        ISSN={0360-5302},
     journal={Comm. Partial Differential Equations},
      volume={8},
      number={1},
       pages={21\ndash 64},
         url={http://dx.doi.org/10.1080/03605308308820262},
      review={\MR{686819}},
}

\bib{Ichihara-15}{article}{
      author={Ichihara, N.},
       title={The generalized principal eigenvalue for
  {H}amilton-{J}acobi-{B}ellman equations of ergodic type},
        date={2015},
        ISSN={0294-1449},
     journal={Ann. Inst. H. Poincar\'e Anal. Non Lin\'eaire},
      volume={32},
      number={3},
       pages={623\ndash 650},
      review={\MR{3353703}},
}

\bib{Kenig-Silvestre-Wang}{article}{
      author={Kenig, C.},
      author={Silvestre, L.},
      author={Wang, J.-N.},
       title={On {L}andis' conjecture in the plane},
        date={2015},
        ISSN={0360-5302},
     journal={Comm. Partial Differential Equations},
      volume={40},
      number={4},
       pages={766\ndash 789},
         url={http://dx.doi.org/10.1080/03605302.2014.978015},
      review={\MR{3299355}},
}

\bib{Kenig-05}{incollection}{
      author={Kenig, C.~E.},
       title={Some recent quantitative unique continuation theorems},
        date={2006},
   booktitle={S\'eminaire: \'equations aux {D}\'eriv\'ees {P}artielles.
  2005--2006},
      series={S\'emin. \'Equ. D\'eriv. Partielles},
   publisher={\'Ecole Polytech., Palaiseau},
       pages={Exp. No. XX, 12},
      review={\MR{2276085}},
}

\bib{Kondratev-Landis}{article}{
      author={Kondrat$^\prime$ev, V.~A.},
      author={Landis, E.~M.},
       title={Qualitative properties of the solutions of a second-order
  nonlinear equation},
        date={1988},
        ISSN={0368-8666},
     journal={Mat. Sb. (N.S.)},
      volume={135(177)},
      number={3},
       pages={346\ndash 360, 415},
      review={\MR{937645}},
}

\bib{Krylov}{book}{
      author={Krylov, N.~V.},
       title={Controlled diffusion processes},
      series={Applications of Mathematics},
   publisher={Springer-Verlag, New York-Berlin},
        date={1980},
      volume={14},
        ISBN={0-387-90461-1},
        note={Translated from the Russian by A. B. Aries},
      review={\MR{601776}},
}

\bib{Meshkov}{article}{
      author={Meshkov, V.~Z.},
       title={On the possible rate of decrease at infinity of the solutions of
  second-order partial differential equations},
        date={1991},
        ISSN={0368-8666},
     journal={Mat. Sb.},
      volume={182},
      number={3},
       pages={364\ndash 383},
      review={\MR{1110071}},
}

\bib{Murata-86}{article}{
      author={Murata, M.},
       title={Structure of positive solutions to {$(-\Delta+V)u=0$} in {${\bf
  R}^n$}},
        date={1986},
        ISSN={0012-7094},
     journal={Duke Math. J.},
      volume={53},
      number={4},
       pages={869\ndash 943},
         url={http://dx.doi.org/10.1215/S0012-7094-86-05347-0},
      review={\MR{874676}},
}

\bib{MMU}{article}{
      author={Murata, M.},
       title={Semismall perturbations in the {M}artin theory for elliptic
  equations},
        date={1997},
        ISSN={0021-2172},
     journal={Israel J. Math.},
      volume={102},
       pages={29\ndash 60},
         url={http://dx.doi.org/10.1007/BF02773794},
      review={\MR{1489100}},
}

\bib{NP-92}{article}{
      author={Nussbaum, R.~D.},
      author={Pinchover, Y.},
       title={On variational principles for the generalized principal
  eigenvalue of second order elliptic operators and some applications},
        date={1992},
        ISSN={0021-7670},
     journal={J. Anal. Math.},
      volume={59},
       pages={161\ndash 177},
         url={http://dx.doi.org/10.1007/BF02790223},
        note={Festschrift on the occasion of the 70th birthday of Shmuel
  Agmon},
      review={\MR{1226957}},
}

\bib{PER-60}{article}{
      author={Persson, A.},
       title={Bounds for the discrete part of the spectrum of a semi-bounded
  {S}chr\"odinger operator},
        date={1960},
        ISSN={0025-5521},
     journal={Math. Scand.},
      volume={8},
       pages={143\ndash 153},
         url={http://dx.doi.org/10.7146/math.scand.a-10602},
      review={\MR{0133586}},
}

\bib{Pinchover-88}{article}{
      author={Pinchover, Y.},
       title={On positive solutions of second-order elliptic equations,
  stability results, and classification},
        date={1988},
        ISSN={0012-7094},
     journal={Duke Math. J.},
      volume={57},
      number={3},
       pages={955\ndash 980},
      review={\MR{975130}},
}

\bib{Pinchover-89}{article}{
      author={Pinchover, Y.},
       title={Criticality and ground states for second-order elliptic
  equations},
        date={1989},
        ISSN={0022-0396},
     journal={J. Differential Equations},
      volume={80},
      number={2},
       pages={237\ndash 250},
         url={http://dx.doi.org/10.1016/0022-0396(89)90083-1},
      review={\MR{1011149}},
}

\bib{Pinchover-90}{article}{
      author={Pinchover, Y.},
       title={On criticality and ground states of second order elliptic
  equations. {II}},
        date={1990},
        ISSN={0022-0396},
     journal={J. Differential Equations},
      volume={87},
      number={2},
       pages={353\ndash 364},
         url={http://dx.doi.org/10.1016/0022-0396(90)90007-C},
      review={\MR{1072906}},
}

\bib{Pinchover-07}{article}{
      author={Pinchover, Y.},
       title={A {L}iouville-type theorem for {S}chr\"odinger operators},
        date={2007},
        ISSN={0010-3616},
     journal={Comm. Math. Phys.},
      volume={272},
      number={1},
       pages={75\ndash 84},
         url={http://dx.doi.org/10.1007/s00220-007-0197-3},
      review={\MR{2291802}},
}

\bib{Pinchover-08}{article}{
      author={Pinchover, Y.},
      author={Tertikas, A.},
      author={Tintarev, K.},
       title={A {L}iouville-type theorem for the {$p$}-{L}aplacian with
  potential term},
        date={2008},
        ISSN={0294-1449},
     journal={Ann. Inst. H. Poincar\'e Anal. Non Lin\'eaire},
      volume={25},
      number={2},
       pages={357\ndash 368},
         url={http://dx.doi.org/10.1016/j.anihpc.2006.12.004},
      review={\MR{2400106}},
}

\bib{Pinsky}{book}{
      author={Pinsky, R.~G.},
       title={Positive harmonic functions and diffusion},
      series={Cambridge Studies in Advanced Mathematics},
   publisher={Cambridge University Press, Cambridge},
        date={1995},
      volume={45},
        ISBN={0-521-47014-5},
      review={\MR{1326606}},
}

\bib{LR18}{article}{,
	Author = {Rossi, L.},
	Journal = {ArXiv e-prints},
	Title = {The Landis conjecture with sharp rate of decay},
	Volume = {1807.00341},
	Year = {2018},
	url = {https://arxiv.org/abs/1807.00341}
	}

\bib{Barry-81}{article}{
      author={Simon, B.},
       title={Large time behavior of the {$L^{p}$}\ norm of {S}chr\"odinger
  semigroups},
        date={1981},
        ISSN={0022-1236},
     journal={J. Funct. Anal.},
      volume={40},
      number={1},
       pages={66\ndash 83},
         url={http://dx.doi.org/10.1016/0022-1236(81)90073-2},
      review={\MR{607592}},
}

\bib{Barry-82}{article}{
      author={Simon, B.},
       title={Schr\"odinger semigroups},
        date={1982},
        ISSN={0273-0979},
     journal={Bull. Amer. Math. Soc. (N.S.)},
      volume={7},
      number={3},
       pages={447\ndash 526},
         url={http://dx.doi.org/10.1090/S0273-0979-1982-15041-8},
      review={\MR{670130}},
}

\end{biblist}
\end{bibdiv}
\end{document}